\newtheorem{theorem}{Theorem}[section]
\newtheorem{lemma}[theorem]{Lemma}
\newtheorem{proposition}[theorem]{Proposition}
\theoremstyle{definition}
\newtheorem{remark}{Remark}
\numberwithin{equation}{section}
\def\O{{\Omega}}
\def\o{{\omega}}
\def\eps{{\varepsilon}}
\def\a{{\mathcal{A}}}
\def\I{{\mathcal{I}}}
\def\j{{\mathcal{J}}}
\def\m{{\mathcal{M}}}
\def\r{{\mathcal{R}}}
\def\s{{\mathcal{S}}}
\def\E{{\mathcal{E}}}
\def\L{{\mathcal{L}}}
\def\M{{\mathcal{M}}}
\def\R{{\mathbb{R}}}
\def\N{{\mathbb{N}}}
\newcommand{\ope}[1]{\E[{#1}]}
\newcommand{\lb}[1]{\L_{_{#1}}}
\newcommand{\oplb}[2]{\L_{_{#2}}[{#1}]}
\newcommand{\oplm}[2]{\L_{_{\sigma,{#2}}}[{#1}]}
\newcommand{\mb}[1]{\M_{_{#1}}}
\newcommand{\opmb}[2]{\M_{_{#2}}[{#1}]}
\newcommand{\opmem}[2]{\M_{_{\sigma,{#2}}}[{#1}]}
\newcommand{\nlp}[3]{\|{#1}\|_{L^{#2}{(#3)}}}
\newcommand{\nlto}[1]{\|{#1}\|_{2}}
\newcommand{\nli}[2]{\|{#1}\|_{L^{\infty}{(#2)}}}
\newcommand{\flap}[2]{\Delta^{#2}{#1}}
\newcommand{\transposee}[1]{{\vphantom{#1}}^{\mathit t}{#1}}
\newtheorem{claim}[theorem]{Claim}
\newcommand{\fdem}{\vskip 0.2 pt \hfill $\square$  }
\def\ds{\displaystyle}
\newenvironment{formula}[1]{\begin{equation}\label{eq:#1}}
                       {\end{equation}\noindent}
\def\Fi#1{\begin{formula}{#1}}
\def\Ff{\end{formula}\noindent}
\title{On the definition and the properties of the principal eigenvalue of some nonlocal operators
}
\author{Henri Berestycki \thanks{
CAMS - \'Ecole des Hautes \'Etudes en Sciences Sociales,
190-198 avenue de France, 75013, Paris, France, {\itshape email:
}\ttfamily hb@ehess.fr} , J\'er\^ome Coville \thanks{
UR 546 Biostatistique et Processus  Spatiaux, INRA, Domaine St Paul Site Agroparc, F-84000 Avignon, France, {\itshape email:
}\ttfamily jerome.coville@avignon.fr} , Hoang-Hung Vo \thanks{  Institute for Mathematical Sciences, Renmin University of China, 59 Zhongguancun, Haidian, Beijing, 100872, China.  {\itshape present email:
}\ttfamily vhhungkhtn@gmail.com}}
\begin{document}

\maketitle
\begin{abstract}
In this article  we study some spectral properties of the linear operator $\lb{\O}+a$ defined on the space $C(\bar\O)$ by :
$$ \oplb{\varphi}{\O} +a\varphi:=\int_{\O}K(x,y)\varphi(y)\,dy+a(x)\varphi(x)
$$

 where $\O\subset \R^N$ is  a domain, possibly unbounded, $a$ is a continuous bounded function and 
 $K$ is a continuous, non negative kernel satisfying an integrability condition.

We focus our analysis on  the properties of the generalized principal eigenvalue $\lambda_p(\lb{\O}+a)$ defined by 
$$\lambda_p(\lb{\O}+a):= \sup\{\lambda \in \R \,|\, \exists \varphi \in C(\bar \O), \varphi>0,  \textit{ such that }\; \oplb{\varphi}{\O}+a\varphi +\lambda\varphi \le 0 \; \text{ in }\;\O\}. $$

 We  establish some new properties of this generalized principal eigenvalue $\lambda_p$. Namely,  we prove the equivalence of different definitions of the principal eigenvalue.  We also study the behaviour of  $\lambda_p(\lb{\O}+a)$ with respect to  some scaling of $K$.

For kernels $K$ of the type, $K(x,y)=J(x-y)$ with $J$ a  compactly supported probability density, we also establish some asymptotic properties of $\lambda_{p} \left(\lb{\sigma,m,\O} -\frac{1}{\sigma^m}+a\right)$  where $\lb{\sigma,m,\O}$ is defined by  
$\ds{\oplb{\varphi}{\sigma,2,\O}:=\frac{1}{\sigma^{2+N}}\int_{\O}J\left(\frac{x-y}{\sigma}\right)\varphi(y)\, dy}$. In particular, we prove that $$\lim_{\sigma\to 0}\lambda_p\left(\L_{\sigma,2,\O}-\frac{1}{\sigma^{2}}+a\right)=\lambda_1\left(\frac{D_2(J)}{2N}\Delta +a\right),$$
where $D_2(J):=\int_{\R^N}J(z)|z|^2\,dz$ and $\lambda_1$ denotes the Dirichlet principal eigenvalue of the elliptic operator. In addition,  we obtain some convergence results for the corresponding eigenfunction $\varphi_{p,\sigma}$.
\end{abstract}

\tableofcontents

\section{Introduction}

The principal eigenvalue of an operator is a fundamental notion in modern analysis. In particular, this notion is widely used in PDE's literature  and is at the source of many profound results especially in the study of elliptic semi linear problems. 
 For example, the principal eigenvalue  is  used to characterise the stability of equilibrium of a reaction-diffusion equation enabling the definition of persistence criteria \cite{Cantrell1989,Cantrell1991a,Cantrell1998,Berestycki2005,Englander2004,Kawasaki1997,Pinsky1996}.  It is also an important tool in the characterisation  of  maximum principle properties satisfies by elliptic operators \cite{Berestycki2015,Berestycki1994} and to describe  continuous  semi-groups that preserve an order \cite{Akian2014,Donsker1975,Lemmens2012}. It is further used in obtaining  Liouville type results for elliptic semi-linear equations  \cite{Berestycki2006,Berestycki2007}.

  In this article we are interested in such notion for  linear operators $\lb{\O}+a$ defined on the space of continuous functions $C(\bar \O)$ by :  
$$\oplb{\varphi}{\O}+a\varphi:=\int_{\O}K(x,y)\varphi(y)\,dy+a(x)\varphi(x)$$
where $\O\subset \R^N$ is a domain, possibly unbounded,  $a$ is a continuous bounded function and $K$ is a  non negative kernel satisfying an integrability condition. The precise assumptions on $\O,K$ and $a$ will be given later on.
    
To our knowledge, for most of  positive operators, the principal eigenvalue is a notion related to the existence of an eigen-pair, namely an eigenvalue associated with a positive eigen-element. For the operator $\lb{\O}+a$, when the function $a$ is not constant, for any real $\lambda$, neither $\lb{\O} +a+\lambda$ nor its inverse  are  compact operators.  Moreover, as noticed in \cite{Coville2010,Coville2013,Kao2010,Shen2010}, the operator $\lb{\O}+a$ may not have any eigenvalues in the space $L^p(\O)$ or $C(\bar \O)$. For such operator, the existence of an eigenvalue associated with a positive eigenvector  is then not guaranteed. Studying quantities that can be used as  surrogates of a principal eigenvalue and establishing their most important properties are therefore of great interest for such operators.

In this perspective, we are interested in the properties of the following quantity:
\begin{equation}\label{bcv-pev-def}
\lambda_p(\lb{\O}+a):= \sup\{\lambda \in \R \,|\, \exists \varphi \in C(\bar \O), \varphi>0,  \textit{ such that }\; \oplb{\varphi}{\O}+a(x)\varphi +\lambda\varphi \le 0 \; \text{ in }\;\O\},
\end{equation}
which can be  expressed equivalently by the $\sup \inf$ formula:
\begin{equation}
\lambda_p(\lb{\O}+a)=\sup_{\stackrel{\varphi \in C(\bar\O)}{\varphi >0}}\inf_{x\in \O}\left(-\frac{\oplb{\varphi}{\O}(x)+a(x)\varphi(x)}{\varphi(x)}\right).
\end{equation}

This  number  was originally introduced in the Perron-Frobenius Theory to characterise the eigenvalues of an irreducible positive matrix \cite{Collatz1942,Wielandt1950}. Namely, for a positive irreducible matrix $A,$  the eigenvalue $\lambda_1(A)$ associated with a positive eigenvector can be characterised as follows: 
\begin{equation}\label{bcv-pev-cw}
\lambda_p(A):=\sup_{\stackrel{x\in \R^N}{x>0}}\inf_{i\in\{1,\ldots,N\}}\left(-\frac{(Ax)_i}{x_i}\right)=\lambda_1(A)=\inf_{\stackrel{x\in \R^N}{x\ge 0, x\neq 0}}\sup_{i\in\{1,\ldots,N\}}\left(-\frac{(Ax)_i}{x_i}\right)=:\lambda'_p(A),
\end{equation} 
 also known as  the Collatz-Wieldandt characterisation.

Numerous  generalisation of these types of characterisation exist in the literature. Generalisations of the characterisation of the principal eigenvalue  by variants of the Collatz-Wielandt characterisation (i.e. \eqref{bcv-pev-cw}) were first obtained  for positive compact operators in $L^p(\O)$  \cite{Karlin1959,Karlin1964,Schaefer1984} and later for general positive operators that posses an eigen-pair \cite{Friedl1990}.

 In parallel with the generalisation of the Perron-Frobenius Theory, several inf sup formulas have been  developed to characterise the spectral properties of elliptic operators satisfying a maximum principle,  see  the fundamental works of Donsker and Varadhan \cite{Donsker1975},  Nussbaum, Pinchover \cite{Nussbaum1992}, Berestycki, Nirenberg, Varadhan \cite{Berestycki1994} and  Pinsky \cite{Pinsky1995,Pinsky1995a}. 
 In particular, for an elliptic operator defined in a bounded domain $\O\subset \R^N$  and  with bounded continuous coefficients,   $\E:=a_{ij}(x)\partial_{ij}+b_i(x)\partial_i +c(x)$, several notions of principal eigenvalue  have been introduced. On one hand, Donsker and Varadhan \cite{Donsker1975} have introduced  a quantity $\lambda_V(\E)$,  called  principal eigenvalue of $\E$, that satisfies
 \begin{align*}\lambda_V(\E)&:=\inf_{\stackrel{\varphi\in dom(\E)}{\varphi> 0}}\sup_{x\in\O}\left(-\frac{\ope{\varphi}(x)}{\varphi(x)}\right)\\
 &=\inf \{\lambda \in \R \, |\, \exists \, \varphi \in dom(\E), \varphi > 0 \;\text{ such that }\; \ope{\varphi}(x) + \lambda\varphi(x)\ge 0\; \text{ in }\; \O\}, 
 \end{align*} 
  where  $dom(\E)\subset C(\bar \O)$ denotes the domain of definition of $\E$.
  On the other hand Berestycki, Nirenberg and Varadhan \cite{Berestycki1994} have introduced  $\lambda_1(\E)$ defined by: 
 \begin{align*}\label{bcv-eq-lp-ellip}
 \lambda_1 (\E) &:= \sup \{\lambda \in \R \, |\, \exists \, \varphi \in W^{2,N}(\O), \varphi > 0 \;\text{ such that }\; \ope{\varphi}(x) + \lambda\varphi(x)\le 0\; \text{ in }\; \O\},\\
 &=\sup_{\stackrel{\varphi\in W^{2,N}(\O)}{\varphi>0}}\inf_{x\in \O}\left(-\frac{\ope{\varphi}(x)}{\varphi(x)}\right)
 \end{align*} 
 as another possible definition for the principal eigenvalue of $\E$. When $\O$ is a smooth bounded domain and $\E$ has smooth coefficients, both notions coincide (i.e. $\lambda_V(\E)=\lambda_1(\E)$).  The equivalence of this two notions has been recently extended for more general elliptic operators, in particular the equivalence holds true in any bounded domains $\O$ and in any domains when $\E$ is an elliptic self-adjoint operator with bounded coefficients \cite{Berestycki2015}. It is worth mentioning that the quantity $\lambda_V(\E)$ was originally introduced by Donsker and Varadhan \cite{Donsker1975} to obtain the following variational characterisation of $\lambda_1(\E)$ in a bounded domain:
 $$\lambda_{1}(E)= \sup_{d\mu \in \mathbb{P}(\O)} \inf_{\stackrel{\varphi\in dom(\E)}{\varphi> 0}}\int_{\O}\left(-\frac{\ope{\varphi}(x)}{\varphi(x)}\right)d\mu(x),$$    
 where $\mathbb{P}(\O)$ is the set of all probability measure on $\O$. Such characterisation is still valid when $\O$ is unbounded, see   Nussbaum and Pinchover \cite{Nussbaum1992}.

Lately, the search of Liouville type results for semilinear elliptic equations in unbounded domains \cite{Berestycki2006,Rossi2009} and the characterisation of spreading speed \cite{Berestycki2012,Nadin2010} have stimulated the studies of the properties  of $\lambda_1(\E)$ and several other notions of principal eigenvalue have emerged.  For instance, several new notions of principal eigenvalue have been introduced   for general elliptic operators defined on (limit or almost) periodic media \cite{Berestycki2007,Berestycki2006,Nadin2009b,Rossi2009}.  
For the interested reader, we refer  to  \cite{Berestycki2015}, for a review and a comparison of the different  notions of principal eigenvalue for an elliptic operator defined in a unbounded domain.

For the operator $\lb{\O}+a$, much less is known and  only partial results have been obtained when $\O$ is bounded \cite{Coville2010,Coville2015,Donsker1975,Garcia-Melian2009a,Ignat2012,Kao2010} or in a periodic media \cite{Coville2008b,Coville2013,Shen2010,Shen2012}. More precisely, $\lambda_p(\lb{\O}+a)$ has been compared to one of the following definitions : 
$$
\lambda_p'(\lb{\O} +a):=\inf\left\{\lambda\in\R\, |\, \exists \varphi \in C(\O)\cap L^{\infty}(\O), \varphi\ge_{\not\equiv} 0,\; \text{ s. t. }\;\oplb{\varphi}{\O}(x) +(a(x) +\lambda) \varphi(x)\ge 0 \; \text{ in }\; \O \right\}$$
or when $\lb{\O}+a$ is a self-adjoint operator:
\begin{align*}
\lambda_v(\lb{\O} +a)&:=\inf_{\varphi \in L^2(\O),\varphi \not\equiv 0} -\frac{ \langle \oplb{\varphi}{\O} +a\varphi,\varphi\rangle}{\nlp{\varphi}{2}{\O}^2},\\
&=\inf_{\varphi \in L^2(\O),\varphi \not\equiv 0} \frac{\frac{1}{2}\iint_{\O\times\O}K(x,y)[\varphi(x)-\varphi(y)]^2\,dxdy -\int_{\O}\left[a(x)+\int_{\O}K(x,y)\,dy\right]\varphi^2(x)\,dx}{\nlp{\varphi}{2}{\O}^2},
\end{align*}
where $ \langle,\rangle$ denotes the scalar product of $L^2(\O)$.
For $\O\subset \R^N$ a bounded domain  and for particular kernels $K$, an equality similar to $\lambda_V(\E)=\lambda_1(\E)$ has been obtained in \cite{Coville2010}, provided that $K\in C(\bar \O\times \bar\O)$  satisfies some non-degeneracy conditions. The author shows that 
\begin{equation}
\lambda_p(\lb{\O}+a)=\lambda'_p(\lb{\O}+a). \label{bcv-equa-lplp'}
\end{equation}

In a periodic media, an extension of this equality was  obtained in  \cite{Coville2008b,Coville2013} for kernels $K$ of the form $K(x,y):=J(x-y)$ with $J$ a symmetric positive continuous density of probability. In such case, they prove that
\begin{equation}
\lambda_p(\lb{\R^N}+a)=\lambda'_p(\lb{\R^N}+a)=\lambda_v(\lb{\R^N}+a). \label{bcv-equa-lplp'lv}
\end{equation}

In this paper, we pursue the works begun in \cite{Coville2010,Coville2013,Coville2015} by one of the present authors and we investigate more closely the properties of $\lambda_p(\lb{\O}+a)$.  Namely, we first look whether $\lambda_p(\lb{\O}+a)$ can be characterised by other notions of principal eigenvalue and under which conditions on $\O,K$ and $a$ the equality  \eqref{bcv-equa-lplp'} or \eqref{bcv-equa-lplp'lv} holds true. 
In particular, we  introduce a new notion of principal eigenvalue, $\lambda_p^{''}(\lb{\O}+a)$, defined by :
\begin{equation*}\label{def-lp''}
\lambda_p''(\lb{\O} +a):=\inf\left\{\lambda\in\R\, |\, \exists \varphi \in C_c(\O), \varphi\ge_{\not\equiv} 0,\; \text{ such that }\;\oplb{\varphi}{\O}(x) +(a(x) +\lambda) \varphi(x)\ge 0 \; \text{ in }\; \O \right\}
\end{equation*}
and we compare this new quantity with  $\lambda_p,\lambda_p'$ and $\lambda_v$.

Another natural question  is to obtain a clear picture on the dependence of $\lambda_p$ with respect to all the parameters involved. If the behaviour  of $\lambda_p( \lb{\O}+a)$ with respect to $a$ or $\O$ can be exhibited directly from the definition,  the impact of scalings of the kernel is usually unknown  and has been largely ignored in the literature except in some specific situations involving   particular nonlocal dispersal operators  defined  in  a bounded domain   \cite{Andreu2008,Cortazar2008,Kao2010, Shen2015}.

For a particular  type of $K$ and $a$,  we establish  the asymptotic properties  of $\lambda_p$ with respect to some scaling parameter.  
More precisely, let $K(x,y)=J(x-y)$  and let us denote $J_\sigma(z):=\frac{1}{\sigma^N}J\left (\frac{z}{\sigma}\right)$. When $J$ is a  non negative function of unit mass,  we study the properties of the principal eigenvalue of the operator $\ds{\L_{\sigma,m,\O}-\frac{1}{\sigma^m}+a}$, where  
the operator $\L_{\sigma,m,\O}$ is  defined  by:
\begin{align*}
&\oplm{\varphi}{m,\O}:=\frac{1}{\sigma^{m}}\int_{\O}J_\sigma(x-y)\varphi(y)\,dy.
\end{align*}
In this situation, the operator $\ds{\lb{\sigma,m,\Omega}-\frac{1}{\sigma^m}}$ refers to a  nonlocal version of the standard diffusion operator with a homogeneous Dirichlet boundary condition. 
Such type of  operators has appeared recently in the literature to model a population that have a constrained dispersal \cite{Berestycki2014,Fife1996,Hutson2003,Kao2010,Shen2015}.  In this context,  the pre-factor $\frac{1}{\sigma^m}$ is interpreted as a frequency at which the events of dispersal occur.
For $m\in [0,2]$ and a large class of $J$, we obtain the asymptotic limits of $\ds{\lambda_p\left(\L_{\sigma,m,\O}-\frac{1}{\sigma^m}+a\right)}$ as $\sigma\to 0$ and as $\sigma\to +\infty$.

 \subsection{Motivations: nonlocal reaction diffusion equation}
Our interest in studying the properties of $\lambda_p(\lb{\O}+a)$ stems from the recent studies of populations having a long range dispersal strategy \cite{Coville2010,Coville2015, Berestycki2014,Kao2010,Shen2010}. For such a population, a commonly used model that integrates such long range dispersal  is the following {\em nonlocal reaction diffusion equation} (\cite{Fife1996,Grinfeld2005,Hutson2003,Lutscher2005,Turchin1998}): 
\begin{equation}
 \partial_t u(t,x)=\int_{\O}J(x-y)u(t,y)\,dy - u(t,x)\int_{\O}J(y-x)\,dy   + f(x,u(t,x)) \quad \text{ in }\quad \R^+\times \O. \label{bcv-eq-dyn}
 \end{equation}
 In this context, $u(t,x)$ is  the density of the considered population, $J$ is a dispersal kernel and $f(x,s)$ is a KPP type non-linearity describing the growth rate of the population.  
When $\O$ is a bounded domain  \cite{Bates2007,Coville2010,Coville2015,Garcia-Melian2009,Kao2010,Shen2012}, an optimal  persistence criteria  has been obtained  using  the sign of $\lambda_p(\mb{\O}+\partial_uf(x,0))$, where $\mb{\O}$ stands for the operator:
$$\opmb{\varphi}{\O}:= \int_{\O}J(x-y) \varphi(y)\,dy -j(x)\varphi(x),$$
where $j(x):=\int_{\O}J(y-x)\,dy$.\\
In such model, a population will persists if and only if $\lambda_p(\mb{\O}+\partial_uf(x,0))< 0$. We can easily check that $\lambda_p(\mb{\O}+\partial_uf(x,0))=\lambda_p(\lb{\O}-j(x)+\partial_uf(x,0))$.

When $\O=\R^N$ and in periodic media, adapted versions of $\lambda_p$  have been recently used to define an optimal persitence criteria  \cite{Coville2008b,Coville2013,Shen2010,Shen2015}.  The  extension of such type of persistence criteria for  more general environments  is currently investigated by ourself \cite{Berestycki2014} by means of our findings on the properties of $\lambda_p$.

The understanding of the effect of a dispersal process conditioned by a \textit{dispersal budget} is another important question. The  idea introduced by Hutson, Martinez, Mischaikow and Vickers \cite{Hutson2003}, is simple and consists in introducing a cost function related to the amount of energy an individual has to use to produce offspring, that   jumps on a long range.  When a long  range of dispersal  is privileged,   the energy consumed to disperse an individual  is large  and so very few offsprings are dispersed. On the contrary, when the population chooses to disperse on  a short range, few energy is used  and  a large amount of the offsprings is  dispersed.     
In $\R^N,$ to understand the impact of a \textit{dispersal budget} on the range of dispersal,  we are led to consider the family of dispersal operator :
\begin{equation*}
\opmb{\varphi}{\sigma,m}(x):=\frac{1}{\sigma^{m}}\left(J_\sigma\star \varphi(x) -\varphi(x)\right),  
\end{equation*}
where $J_\sigma(z):=\frac{1}{\sigma^N}J\left(\frac{z}{\sigma}\right)$ is the standard scaling of the probability density $J$. For such family, the study of the dependence of $\lambda_p(\m_{\sigma,m}+a)$ with respect to $\sigma$ and $m$ is a first step to analyse  the impact of the range of the dispersal $\sigma$  on the persistence of the population. In particular the asymptotic limits $\sigma \to +\infty$ and $\sigma  \to 0$ are of primary interest.

\subsection{Assumptions and Main Results}
Let us now state the precise assumptions we are making on the domain $\O$, the kernel $K$ and the function $a$.
Here, throughout the paper, $\O\subset \R^N$ is a  domain (open connected set of $\R^N$) and for $a$ and $K$ we assume the following:  
\begin{equation} \label{hyp1}
a\in C(\bar \O)\cap L^{\infty}(\O),
\end{equation}
and $K$ is a non-negative Caratheodory function, that is $K\ge 0$ and,

\begin{equation}\label{hyp2}
  \forall\, x\in \O \; K(x,\cdot)\text{ is measurable,}\;   K(\cdot,y) \text{ is uniformly continuous for almost every }\; y\in \O.
\end{equation}

\noindent For our analysis, we  also require that $K$ satisfies the following non-degeneracy condition: 

\textit{There exist positive constants $\; r_0\ge r_1>0, C_0\ge c_0>0\;$  such that  $K$ satisfies:
\begin{equation}
C_0\mathds{1}_{_{\O\cap B_{r_0}(x)}}(y)\ge K(x,y)\ge c_0\mathds{1}_{_{\O\cap B_{r_1}(x)}}(y) \quad \text{for all }\; x,y\in \O, \label{hyp3}
\end{equation}
where $\mathds{1}_A$ denotes the characteristic function of the set $A\subset \R^N$ and $B_{r}(x)$ is the ball centred at $x$ of radius $r$.}
These conditions are satisfied for example for kernels like $K(x,y)=J\left(\frac{x-y}{g(y)h(x)}\right)$ with $h$ and $g$ positive and bounded in $\O$ and $J\in C(\R^N), J\ge 0$, a  compactly supported function such that $J(0)>0$. Note that when $\O$ is bounded, any  kernel $K \in C(\bar \O\times\bar \O)$ which is positive on the diagonal, satisfies all theses assumptions. 
Under this assumptions, we can check that the operator $\lb{\O}+a$ is  continuous in $C(\bar \O)$,\cite{Krasnoselʹskii1976}.

Let us now state our main results. We start by investigating the case of a bounded domain $\O$. In this situation, we prove that $\lambda_p,\lambda_p'$ and $\lambda_p''$ represent the same quantity. Namely, we show the following

\begin{theorem}\label{bcv-pev-thm1}
Let $\O\subset \R^N$ be a bounded domain and assume that $K$ and $a$ satisfy \eqref{hyp1} -- \eqref{hyp3}. Then, the following equality holds : 
$$\lambda_p(\lb{\O}+a)=\lambda_p'(\lb{\O}+a)=\lambda_p''(\lb{\O}+a).$$
In addition, if $K$ is symmetric, then 
$$\lambda_p(\lb{\O}+a)=\lambda_v(\lb{\O}+a).$$
\end{theorem}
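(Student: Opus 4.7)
The plan is to prove the triangular chain
\[
\lambda_p(\lb{\O}+a) \;\le\; \lambda_p'(\lb{\O}+a) \;\le\; \lambda_p''(\lb{\O}+a) \;\le\; \lambda_p(\lb{\O}+a),
\]
forcing the three quantities to coincide, and then to handle the symmetric case separately.

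For the first inequality $\lambda_p\le\lambda_p'$, I would use the nonlocal touching argument. Given $\varphi\in C(\bar\O)$, $\varphi>0$, with $\oplb{\varphi}{\O}+(a+\lambda)\varphi\le 0$ and $\psi\in C(\O)\cap L^\infty(\O)$, $\psi\ge 0$, $\psi\not\equiv 0$, with $\oplb{\psi}{\O}+(a+\mu)\psi\ge 0$, the compactness of $\bar\O$ and strict positivity of $\varphi$ on $\bar\O$ make $t^\ast:=\sup_\O(\psi/\varphi)$ finite and strictly positive. At a point $x_0$ where $\psi/\varphi$ (nearly) attains its supremum, the positivity of $K$ gives $\oplb{\psi}{\O}(x_0)\le t^\ast\oplb{\varphi}{\O}(x_0)$, and combining this with the two defining inequalities forces $\lambda\le\mu$. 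The second inequality $\lambda_p'\le\lambda_p''$ is immediate since $C_c(\O)\subset C(\O)\cap L^\infty(\O)$ makes the infimum in $\lambda_p''$ taken over a subset of the admissible test functions for $\lambda_p'$.

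The main task is $\lambda_p''\le\lambda_p$. I would exhaust $\O$ by a nested family of bounded open subdomains $\{\O_n\}$ with $\bar\O_n\subset\O_{n+1}$ and $\bigcup_n\O_n=\O$, noting that $K|_{\O_n\times\O_n}$ still satisfies \eqref{hyp3} with the same constants. On each $\O_n$ one re-establishes, in the Caratheodory setting, the equality $\lambda_p(\lb{\O_n}+a)=\lambda_p'(\lb{\O_n}+a)$ by the touching argument of the previous step together with a Krein-Rutman construction on the compact integral operator $\lb{\O_n}$ acting on $C(\bar\O_n)$. This yields, for each $\eta>0$, a nontrivial nonnegative $\psi_n\in C(\O_n)\cap L^\infty(\O_n)$ with $\oplb{\psi_n}{\O_n}+(a+\mu_n)\psi_n\ge 0$ in $\O_n$, where $\mu_n:=\lambda_p(\lb{\O_n}+a)+\eta$. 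Extending $\psi_n$ by zero gives $\tilde\psi_n\in L^\infty(\O)$ satisfying the same inequality on $\O$, but generally discontinuous across $\partial\O_n$. To reach $C_c(\O)$ I would multiply by a continuous cutoff $\chi_n$ supported in $\O_n$ and equal to $1$ on the core $\{x\in\O_n:d(x,\partial\O_n)\ge 2r_0\}$; the crucial point is that the finite range $r_0$ in \eqref{hyp3} forces $\oplb{(\chi_n\tilde\psi_n)}{\O}$ to coincide with $\oplb{\psi_n}{\O_n}$ on the core, so the inequality is preserved with the same $\mu_n$, while the loss on the thin collar is quantitatively controlled via the upper bound on $K$ and the boundedness of $\psi_n$ and can be absorbed into an arbitrarily small increment of $\mu_n$. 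A standard domain-monotonicity argument, based on restricting any supersolution of $\lb{\O}+a$ to $\O_n$, then gives $\lambda_p(\lb{\O_n}+a)\downarrow\lambda_p(\lb{\O}+a)$, and letting $\eta\to 0$ concludes $\lambda_p''\le\lambda_p$.

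For the symmetric addendum, self-adjointness of $\lb{\O}+a$ on $L^2(\O)$ gives the variational characterization of $\lambda_v$; testing the Rayleigh quotient against the nonnegative compactly supported subsolutions produced above yields $\lambda_v\le\lambda_p''$, while approximating an $L^2$ minimizer by a continuous positive supersolution (via Krein-Rutman on the compact symmetric integral part combined with a mild regularization of $a$) gives $\lambda_p\le\lambda_v$. Combined with the three-way equality just established, this finishes $\lambda_v=\lambda_p$. The dominant technical difficulty throughout is the cutoff step in the third paragraph: multiplication by $\chi_n$ does not commute with $\lb{\O}$, so preserving the subsolution inequality while promoting $\tilde\psi_n$ to a continuous compactly supported function relies in an essential way on the compact support of $K(x,\cdot)$ supplied by \eqref{hyp3}.
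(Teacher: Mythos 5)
Your overall architecture coincides with the paper's: the touching argument at a maximum of $\psi/\varphi$ gives $\lambda_p\le\lambda_p'$, the inclusion $C_c(\O)\subset C(\O)\cap L^\infty(\O)$ gives $\lambda_p'\le\lambda_p''$, the remaining inequality $\lambda_p''\le\lambda_p$ is obtained by cutting off an eigenfunction, and the symmetric case is handled by testing the Rayleigh quotient. However, there is a genuine gap at the heart of your third step. You produce $\psi_n$ on $\O_n$ ``by a Krein--Rutman construction on the compact integral operator $\lb{\O_n}$''. The operator that matters is $\lb{\O_n}+a$, which is a compact operator plus a non-compact multiplication operator, and a central point of the paper (Theorem \ref{bcv-pev-th-crit2}) is that $\lambda_p(\lb{\O_n}+a)$ is an eigenvalue in $C(\bar\O_n)$ \emph{if and only if} $\lambda_p<-\sup_{\O_n}a$; equivalently, Krein--Rutman applied to the reduced compact operator $(\mu-a)^{-1}\lb{\O_n}$ fails to yield a fixed point precisely when its spectral radius never reaches $1$ before $\mu$ hits $\sup a$. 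So the existence of a positive continuous $\psi_n$ at the level $\lambda_p(\lb{\O_n}+a)+\eta$ is exactly what must be proved, and your proposal assumes it. The paper's device is to perturb $a$ by an arbitrarily small amount into a function that equals its supremum on a small ball, so that the non-integrability condition $1/(\nu-a)\notin L^1$ of Theorem \ref{bcv-pev-th-crit1} is met and a genuine positive eigenfunction exists, and then to use the Lipschitz dependence of $\lambda_p$ on the zero-order term to transfer the conclusion back to $a$. That perturbation is the missing idea; without it the sentence ``This yields \dots a nontrivial nonnegative $\psi_n$'' is unjustified.

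Two secondary points. First, the collar error cannot be ``absorbed into an arbitrarily small increment of $\mu_n$'': near $\partial\O_n$ the function $\chi_n\psi_n$ vanishes, so increasing $\mu_n$ contributes nothing there, while the loss $\|K\|_\infty\,|\o\setminus\o'|\,\|\psi_n\|_\infty$ has a fixed size. What closes the computation in the paper is the uniform lower bound $\inf_{\bar\o}\psi_n\ge d_0>0$ (available because $\psi_n$ is a continuous positive eigenfunction on a compact set) together with the strict margin $-\lambda_p-\eps>0$ of the shifted eigenvalue, against which the loss is absorbed by taking the collar of measure proportional to $d_0$; you invoke only the upper bound on $\psi_n$. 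Second, the convergence $\lambda_p(\lb{\O_n}+a)\to\lambda_p(\lb{\O}+a)$ is not a consequence of restricting supersolutions --- that only gives the monotonicity $\lambda_p(\lb{\O_n}+a)\ge\lambda_p(\lb{\O}+a)$; the reverse inequality is Lemma \ref{bcv-lem-lim} and rests on a Harnack-type estimate allowing one to pass to the limit in the eigenfunctions. For a bounded $\O$ you can in fact dispense with the exhaustion entirely and perform the cutoff directly inside $\O$, which is what the paper does.
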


When $\O$ is an  unbounded domain, the equivalence  of $\lambda_p, \lambda_p'$ and $\lambda_p''$ is not clear for general kernels.
Namely, let consider $\O=\R$, $K(x,y)=J(x-y)$ with $J$ a density of probability with a compact support and such that $\int_{\R}J(z)z\,dz>0$. For the operator  $\lb{\R}$, which corresponds to the standard convolution by $J$,   by using  $e^{\lambda x}$ and constants as test functions, we can easily check that $\ds{\lambda_p^{'}(\lb{\R})\le-1< -\min_{\lambda>0}\int_{\R}J(z)e^{-\lambda z}\,dz \le \lambda_p(\lb{\R})}$.  
However some inequalities remain true in general and the equivalence of the three notions holds for self-adjoint operators. More precisely, we prove  here the following
\begin{theorem}\label{bcv-pev-thm2}
Let $\O\subset \R^N$ be an unbounded domain and assume that $K$ and $a$ satisfy \eqref{hyp1} -- \eqref{hyp3}. Then the following inequalities hold 
$$\lambda_p'(\lb{\O}+a)\le\lambda_p''(\lb{\O}+a)\le\lambda_p(\lb{\O}+a).$$
When $K$ is symmetric and such that  $p(x):=\int_{\O}K(x,y)\,dy \in L^{\infty}(\O)$ then  the following equality holds : 
$$  \lambda_v(\lb{\O}+a)= \lambda_p'(\lb{\O}+a)=\lambda_p''(\lb{\O}+a)=\lambda_p(\lb{\O}+a).$$
\end{theorem}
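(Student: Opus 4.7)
The plan is to prove the two assertions separately, both by a bounded-domain exhaustion $\O_n\nearrow\O$ that reduces matters to the already-proven Theorem~\ref{bcv-pev-thm1}.

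\textbf{The chain $\lambda_p'\le\lambda_p''\le\lambda_p$.} The inequality $\lambda_p'(\lb{\O}+a)\le\lambda_p''(\lb{\O}+a)$ is immediate, since $C_c(\O)\subset C(\O)\cap L^\infty(\O)$ shows the admissible test-function class for $\lambda_p''$ is contained in that for $\lambda_p'$, and comparing infima reverses the inclusion. For $\lambda_p''(\lb{\O}+a)\le\lambda_p(\lb{\O}+a)$, fix $\epsilon>0$; Theorem~\ref{bcv-pev-thm1} on the bounded subdomain $\O_n$ supplies $\psi_n\in C_c(\O_n)$, $\psi_n\ge_{\not\equiv}0$, satisfying $\oplb{\psi_n}{\O_n}+(a+\lambda_p(\lb{\O_n}+a)+\epsilon)\psi_n\ge 0$ in $\O_n$. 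Extending $\psi_n$ by zero produces an element of $C_c(\O)$, and since $\oplb{\psi_n}{\O}\ge 0$ with $\psi_n\equiv 0$ on $\O\setminus\O_n$, the inequality remains valid in all of $\O$. Hence $\lambda_p''(\lb{\O}+a)\le\lambda_p(\lb{\O_n}+a)+\epsilon$. The proof then hinges on the monotone convergence $\lim_n\lambda_p(\lb{\O_n}+a)=\lambda_p(\lb{\O}+a)$. The inequality $\lambda_p(\lb{\O_n}+a)\ge\lambda_p(\lb{\O}+a)$ follows from the supremum definition. For the opposite limit inequality, normalize the positive principal eigenfunctions $\varphi_n$ of $\lb{\O_n}+a$ (available by Theorem~\ref{bcv-pev-thm1}) at a fixed $x_0\in\O$; derive locally uniform pointwise and continuity estimates from the eigenvalue equation and the non-degeneracy bound \eqref{hyp3}; extract a locally uniformly convergent subsequence $\varphi_n\to\varphi>0$; and pass to the limit to find a positive supersolution of $\lb{\O}+a$ with eigenvalue $\lim_n\lambda_p(\lb{\O_n}+a)$, forcing $\lambda_p(\lb{\O}+a)\ge\lim_n\lambda_p(\lb{\O_n}+a)$.

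\textbf{The self-adjoint equivalence.} When $K$ is symmetric and $p\in L^\infty(\O)$, the self-adjoint part of Theorem~\ref{bcv-pev-thm1} yields $\lambda_v(\lb{\O_n}+a)=\lambda_p(\lb{\O_n}+a)$. A direct computation using the Dirichlet-form expression shows that extending by zero preserves the Rayleigh quotient, so $\lambda_v(\lb{\O_n}+a)\searrow\lambda_v(\lb{\O}+a)$ by density of $\bigcup_n C_c(\O_n)$ in $L^2(\O)$. Combining with the $\lambda_p$-continuity from the first part gives $\lambda_v(\lb{\O}+a)=\lambda_p(\lb{\O}+a)$. To close the chain it remains to prove $\lambda_v\le\lambda_p'$: for any $\varphi\in C(\O)\cap L^\infty(\O)$, $\varphi\ge_{\not\equiv}0$, with $\oplb{\varphi}{\O}+(a+\lambda)\varphi\ge 0$, take a cutoff $\chi_R\in C_c(\R^N)$ equal to $1$ on $B_R$ and $0$ outside $B_{R+1}$. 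Testing the subsolution inequality against $\chi_R^2\varphi\in L^2(\O)$ and using the identity
\[
\chi_R(x)\varphi(x)-\chi_R(y)\varphi(y)=\chi_R(x)(\varphi(x)-\varphi(y))+\varphi(y)(\chi_R(x)-\chi_R(y))
\]
together with $\|p\|_{L^\infty}<\infty$ and the uniform compact-range bound in \eqref{hyp3}, one controls the commutator error and concludes that the Rayleigh quotient of $\chi_R\varphi$ is bounded above by $\lambda+o(1)$ as $R\to\infty$. Hence $\lambda_v\le\lambda$, and the chain $\lambda_v\le\lambda_p'\le\lambda_p''\le\lambda_p=\lambda_v$ collapses to equality.

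\textbf{Main obstacle.} The most delicate point is the continuity $\lim_n\lambda_p(\lb{\O_n}+a)=\lambda_p(\lb{\O}+a)$ on the unbounded $\O$: without operator compactness, preventing the mass of the normalized approximating eigenfunctions from escaping to infinity requires careful use of the non-degeneracy assumption \eqref{hyp3}. The Rayleigh truncation is a secondary technical point, where the compact-range condition in \eqref{hyp3} is precisely what keeps the nonlocal commutator $[\lb{\O},\chi_R]$ small.
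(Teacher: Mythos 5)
Your overall architecture --- exhaustion by bounded subdomains $\O_n$, invoking Theorem \ref{bcv-pev-thm1} on each $\O_n$, the domain-continuity $\lim_n\lambda_p(\lb{\O_n}+a)=\lambda_p(\lb{\O}+a)$, and a truncated Rayleigh quotient to get $\lambda_v\le\lambda_p'$ --- is exactly the paper's (Lemmas \ref{bcv-lem-lim}, \ref{bcv-lem-lp'-le-lp}, \ref{bcv-lem-lp-le-lpprime} and the remark following the latter). However, the two steps you treat as routine are precisely where the work lies, and as written both contain gaps. In your sketch of the domain-continuity of $\lambda_p$, you take the positive principal eigenfunctions $\varphi_n$ of $\lb{\O_n}+a$ to be ``available by Theorem \ref{bcv-pev-thm1}''. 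That theorem only asserts the equality of the various definitions on bounded domains; it produces no eigenfunction, and the paper insists that $\lambda_p(\lb{\O_n}+a)$ need not be an eigenvalue in $C(\bar\O_n)$ even for bounded $\O_n$ (Theorem \ref{bcv-pev-th-crit2}: this fails exactly when $\lambda_p=-\sup a$). The paper's Lemma \ref{bcv-lem-lim} must first replace $a$ by a nearby $a_{k_0}$ that is constant equal to $\sup_\O a$ on a small ball, so that $1/(\sup a_{k_0}-a_{k_0})\notin L^1$ locally and the existence criterion of Theorem \ref{bcv-pev-th-crit1} applies, and then transfer the conclusion back via the Lipschitz continuity of $\lambda_p$ in $a$. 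Without this perturbation your compactness argument has no eigenfunctions to run on.

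The second gap is in the step $\lambda_v\le\lambda_p'$. The test function $\varphi$ supplied by the definition of $\lambda_p'$ lies only in $C(\O)\cap L^{\infty}(\O)$, not in $L^2(\O)$. After symmetrising, the commutator error you must absorb is comparable to $\int_{\O\cap(B_{R+R_0}\setminus B_{R-R_0})}\varphi^2$, and the assertion that the Rayleigh quotient of $\chi_R\varphi$ is ``$\lambda+o(1)$'' amounts to
$$\liminf_{R\to\infty}\frac{\int_{\O\cap(B_{R+R_0}\setminus B_R)}\varphi^2}{\int_{\O\cap B_R}\varphi^2}=0,$$
which is not a soft consequence of the compact range of $K$: it is the content of the paper's Lemma \ref{bcv-pev-lem-tech} (if the ratio stayed $\ge\eps$ for all large $R$, the successive annulus masses would grow geometrically like $(1+\eps)^n$, contradicting the at most polynomial growth forced by $\varphi\in L^{\infty}$). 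Note also that only the liminf vanishes, so the conclusion is reached along a subsequence $R_k\to\infty$ --- which suffices, but is not the unqualified ``$o(1)$ as $R\to\infty$'' you assert. Your closing remark that the truncation is ``a secondary technical point'' inverts the actual difficulty: keeping the commutator small in absolute terms is easy; making it small \emph{relative to} $\|\chi_R\varphi\|^2_{L^2}$ when $\varphi\notin L^2(\O)$ is the heart of the unbounded case.
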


Another striking property of $\lambda_p$ refers to the invariance of $\lambda_p$  under a particular scaling of the kernel $K$. More precisely, we show

\begin{proposition}\label{bcv-pev-prop-scal-eq}
Let $\O\subset \R^N$ be a domain and assume that  $a$ and $K$ satisfy \eqref{hyp1} -- \eqref{hyp3}. For all $\sigma>0$, let  
$\O_{\sigma}:=\sigma\O,$ $a_\sigma(x):=a\left(\frac{x}{\sigma}\right)$ and 
$$\oplb{\varphi}{\sigma,\O_\sigma}(x):=\frac{1}{\sigma^N}\int_{\O_\sigma}K\left(\frac{x}{\sigma},\frac{y}{\sigma}\right)\varphi(y)\,dy.$$
 Then for all $\sigma>0$, one has $$\lambda_p(\lb{\O}+a)=\lambda_p(\lb{\sigma,\O_\sigma}+a_\sigma).$$

\end{proposition}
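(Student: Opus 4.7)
The plan is to prove equality by two inequalities, each obtained through a straightforward rescaling of the admissible test functions in the definition \eqref{bcv-pev-def}. The key observation is that the operator $\L_{\sigma,\O_\sigma}$ is, by construction, the conjugate of $\L_\O$ under the dilation $x \mapsto \sigma x$, so the set of admissible test pairs $(\lambda,\varphi)$ is preserved.

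To prove $\lambda_p(\L_\O + a) \le \lambda_p(\L_{\sigma,\O_\sigma} + a_\sigma)$, I would pick any $\lambda \in \R$ and any $\varphi \in C(\bar\O)$ with $\varphi > 0$ such that $\oplb{\varphi}{\O} + a\varphi + \lambda\varphi \le 0$ in $\O$, and define $\psi(x) := \varphi(x/\sigma)$ on $\bar\O_\sigma$. Then $\psi \in C(\bar\O_\sigma)$ and $\psi > 0$. Performing the change of variables $z = y/\sigma$ (so $dy = \sigma^N\, dz$ and the set $\O_\sigma$ maps onto $\O$) gives
\[
\oplb{\psi}{\sigma,\O_\sigma}(x) = \frac{1}{\sigma^N}\int_{\O_\sigma} K\!\left(\frac{x}{\sigma},\frac{y}{\sigma}\right)\varphi\!\left(\frac{y}{\sigma}\right)dy = \int_{\O} K\!\left(\frac{x}{\sigma},z\right)\varphi(z)\,dz = \oplb{\varphi}{\O}\!\left(\frac{x}{\sigma}\right).
\]
Since also $a_\sigma(x)\psi(x) + \lambda\psi(x) = a(x/\sigma)\varphi(x/\sigma) + \lambda\varphi(x/\sigma)$, the hypothesis evaluated at $x/\sigma \in \O$ yields $\oplb{\psi}{\sigma,\O_\sigma}(x) + a_\sigma(x)\psi(x) + \lambda\psi(x) \le 0$ for every $x \in \O_\sigma$. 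Hence $\lambda$ is admissible for $\lambda_p(\L_{\sigma,\O_\sigma} + a_\sigma)$, and taking the supremum over $\lambda$ yields the desired inequality.

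The reverse inequality $\lambda_p(\L_{\sigma,\O_\sigma} + a_\sigma) \le \lambda_p(\L_\O + a)$ follows by the symmetric argument: given an admissible pair $(\lambda,\psi)$ for the right-hand side, set $\varphi(x) := \psi(\sigma x)$ on $\bar\O$, and the analogous computation (now with change of variables $y = \sigma z$) shows $\oplb{\varphi}{\O}(x) = \oplb{\psi}{\sigma,\O_\sigma}(\sigma x)$, so that the admissibility inequality at $\sigma x \in \O_\sigma$ translates back to the desired inequality in $\O$. Equivalently, this rescaling is an involution (applying the construction with parameter $1/\sigma$ to $\L_{\sigma,\O_\sigma}$ on $\O_\sigma$ returns exactly $\L_\O$ on $\O$), so the two inequalities are logically the same statement applied in two directions.

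There is no genuine obstacle here: the proposition is essentially a tautology once the change of variables is written out, and the continuity/positivity of the rescaled test function is preserved automatically. The only point requiring a little care is keeping track of the factors $\sigma^N$ coming from the Jacobian against the factors $\sigma^{-N}$ built into the definition of $\L_{\sigma,\O_\sigma}$; they cancel exactly, which is of course why the normalization of $\L_{\sigma,\O_\sigma}$ was chosen that way.
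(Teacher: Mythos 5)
Your proof is correct and follows essentially the same route as the paper: rescale an admissible test function via the dilation, perform the change of variables to see that admissibility of $(\lambda,\varphi)$ for $\L_\O+a$ is equivalent to admissibility of $(\lambda,\psi)$ for $\L_{\sigma,\O_\sigma}+a_\sigma$, and conclude by taking suprema in both directions. The only difference is cosmetic; in fact your write-up is cleaner than the paper's, which contains a couple of notational slips (it writes $\O_\sigma=\tfrac{1}{\sigma}\O$ and $\psi(X)=\varphi(\sigma X)$ in the proof while meaning $\O_\sigma=\sigma\O$ and $\psi(X)=\varphi(X/\sigma)$ as in the statement).
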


Observe that no condition on the domain is imposed. Therefore, the invariance of $\lambda_p$ is still valid for  $\O=\R^N$. In this  case, since $\R^N$ is invariant under the scaling, we get   
$$\lambda_p(\lb{\R^N}+a)=\lambda_p(\lb{\sigma,\R^N}+a_\sigma).$$  

Next, for particular type of kernel $K$,  we investigate  the behaviour of $\lambda_p$ with respect of some scaling parameter. 
More precisely, let $K(x,y)=J(x-y)$  and let  $J_\sigma(z):=\frac{1}{\sigma^N}J\left (\frac{z}{\sigma}\right)$. We consider the following operator 
\begin{align*}
&\oplm{\varphi}{m,\O}:=\frac{1}{\sigma^{m}}\int_{\O}J_\sigma(x-y)\varphi(y)\,dy.
\end{align*}
 For $J$ is a  non negative function of unit mass,  we study the asymptotic  properties of the principal eigenvalue of the operator $\ds{\L_{\sigma,m,\O}-\frac{1}{\sigma^m}+a}$  when $\sigma \to 0$ and $\sigma \to +\infty$.

To simplify the presentation of our results, let us introduce the following notation. We denote by $\mb{\sigma,m,\O}$, the following operator:

\begin{equation}
\opmb{\varphi}{\sigma,m,\O}(x):=\frac{1}{\sigma^{m}}\left(\frac{1}{\sigma^{N}}\int_{\O}J\left(\frac{x-y}{\sigma}\right)\varphi(y)\,dy -\varphi(x)\right).\label{bcv-pev-eq-def-scalop}  
\end{equation}

For any domains $\O$,  we obtain the limits of $\lambda_p(\mb{\sigma,m,\O}+a)$ when $\sigma$  tends either to zero or to $+\infty$.  Let us denote the second moment of $J$ by 
 $$D_2(J):= \int_{\R^N} J(z)|z|^2\,dz,$$
the following statement describes the limiting behaviour of $\lambda_p(\mb{\sigma,m,\O}+a)$:

\begin{theorem}\label{bcv-pev-thm4}
 Let $\O$ be a  domain  and assume  that $J$ and $a$  satisfy \eqref{hyp1} -- \eqref{hyp3}. Assume further that $J$ is even and of unit mass. 
  Then, we have the following asymptotic behaviour:
   \begin{itemize}
\item When $0<m\le 2, \qquad\lim_{\sigma\to +\infty}\lambda_p(\mb{\sigma,m,\O}+a)=-\sup_{\O}a$
\item When $m=0, \qquad\lim_{\sigma\to +\infty}\lambda_p(\mb{\sigma,0,\O}+a)=1-\sup_{\O}a$.

In addition, when $\O=\R^N$ and if $a$ is  symmetric ($a(x)=a(-x)$ for all $x$) and the map $t\to a(tx)$ is non increasing for all $x,t>0$ then $\lambda_p(\mb{\sigma,0,\R^N}+a)$ is monotone non decreasing with respect to $\sigma$.
\item When $0\le m <2, \qquad\lim_{\sigma\to 0}\lambda_p(\mb{\sigma,m,\O}+a)=-\sup_{\O}a$ 
\item When $m=2$ and $a\in C^{0,\alpha}(\O)$ for some $\alpha >0$, then 

$$\lim_{\sigma\to 0}\lambda_p(\mb{\sigma,2,\O}+a)=\lambda_1\left(\frac{D_2(J)}{2N}\Delta +a\right)$$ 
and $$\lambda_1\left(\frac{D_2(J)}{2N}\Delta +a\right):=\inf_{\varphi \in H^1_0(\O),\varphi\not \equiv 0} \frac{D_2(J)}{2N}\frac{\int_{\O}|\nabla\varphi|^2(x)\,dx}{\nlto{\varphi}^2} -\frac{\int_{\O}a(x)\varphi^2(x)\,dx}{\nlto{\varphi}^2}. $$
\end{itemize}

 \end{theorem}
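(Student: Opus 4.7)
The plan is to treat the four asymptotic regimes by essentially distinct arguments, with the first three relying on elementary variational/test-function constructions and the last on the classical nonlocal-to-local scaling of quadratic forms. Since $J$ is even the kernel $K_\sigma(x,y)=\sigma^{-m-N}J((x-y)/\sigma)$ is symmetric and $p(x)=\int_\O K_\sigma(x,y)\,dy\le\sigma^{-m}$ lies in $L^\infty(\O)$, so Theorem~\ref{bcv-pev-thm2} identifies $\lambda_p(\mb{\sigma,m,\O}+a)=\lambda_v(\mb{\sigma,m,\O}+a)$ throughout, yielding the Rayleigh representation
\begin{equation*}
\lambda_v(\mb{\sigma,m,\O}+a)=\inf_{\psi\in L^2(\O),\,\psi\not\equiv 0}\frac{\frac{\sigma^{-m}}{2}\iint_{\O\times\O}J_\sigma(x-y)(\psi(x)-\psi(y))^2\,dx\,dy+\sigma^{-m}\int_\O(1-p)\psi^2\,dx-\int_\O a\psi^2\,dx}{\|\psi\|_{L^2(\O)}^2},
\end{equation*}
which I would use as the basis for every upper bound.

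For the three cases whose limit is $-\sup_\O a$ (namely $0<m\le 2$ with $\sigma\to+\infty$, and $0\le m<2$ with $\sigma\to 0$), the lower bound is immediate by testing $\varphi\equiv 1$ in the sup-inf formula: since $\mb{\sigma,m,\O}(1)(x)=\sigma^{-m}(\int_\O J_\sigma(x-y)\,dy-1)\le 0$, we get $\lambda_p\ge -\sup_\O a$ for every $\sigma$. For the matching upper bound I would plug a smooth test function $\psi$ supported in a small ball around a near-maximiser of $a$ into the Rayleigh quotient: the nonlocal energy is bounded uniformly in $\sigma$ (by $C\|\psi\|^2$), so its prefactor $\sigma^{-m}$ makes it vanish as $\sigma\to+\infty$ when $m>0$, while for $\sigma\to 0$ the second-order Taylor expansion of $\psi$ combined with the evenness of $J$ gives the nonlocal energy $=\sigma^{2}\frac{D_2(J)}{2N}\int|\nabla\psi|^2+o(\sigma^2)$, so $\sigma^{-m}$ still kills it for $m<2$. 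In both subcases the remaining contribution $-\int a\psi^2/\|\psi\|^2$ can be made arbitrarily close to $-\sup_\O a$ by concentrating $\psi$. For $m=0$, $\sigma\to+\infty$ (target $1-\sup a$), the nonlocal self-energy $\frac{1}{2}\iint J_\sigma(x-y)(\psi(x)-\psi(y))^2\,dx\,dy+\int(1-p)\psi^2\,dx=\|\psi\|^2-\iint J_\sigma\psi\psi$ tends to $\|\psi\|^2$ because $J_\sigma\ast\psi\to 0$ in $L^2$ by Riemann-Lebesgue ($\widehat{J_\sigma}(\xi)=\widehat J(\sigma\xi)\to 0$), so $\lambda_v\to\inf_\psi[1-\int a\psi^2/\|\psi\|^2]=1-\sup_\O a$. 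The monotonicity claim on $\R^N$ when $t\mapsto a(tx)$ is non-increasing follows by substituting $\psi(s\cdot)$ ($s\ge 1$) in the Rayleigh quotient: a change of variables yields $\lambda_v(\mb{\sigma,0,\R^N}+a)\le\lambda_v(\mb{s\sigma,0,\R^N}+a(\cdot/s))$, and since $a(\cdot/s)\ge a$ for $s\ge 1$ by assumption, antitonicity of $\lambda_v$ in the potential concludes.

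The main difficulty is the regime $m=2$, $\sigma\to 0$, where the nonlocal energy does not vanish but converges to the Dirichlet energy. By the classical scaling result of Bourgain, Brezis, Mironescu and Ponce, $\frac{1}{2\sigma^2}\iint_{\O\times\O}J_\sigma(x-y)(\psi(x)-\psi(y))^2\,dx\,dy$ converges, both pointwise on $H^1_0(\O)$ and in the $\Gamma$-sense, to $\frac{D_2(J)}{2N}\int_\O|\nabla\psi|^2$. Combining the $\Gamma$-liminf with a matching $\Gamma$-limsup obtained by plugging smooth compactly supported almost-minimisers of the local Rayleigh quotient into the nonlocal one then gives the announced convergence $\lambda_v(\mb{\sigma,2,\O}+a)\to\lambda_1(\frac{D_2(J)}{2N}\Delta+a)$. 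The Hölder hypothesis $a\in C^{0,\alpha}$ is used to guarantee $C^{2,\alpha}$ regularity of the relevant test function (e.g.\ the local principal eigenfunction on a smoothed subdomain), so that the pointwise Taylor expansion yields $\mb{\sigma,2,\O}\psi=\frac{D_2(J)}{2N}\Delta\psi+o(1)$ uniformly on its support, which provides an alternative direct route to the upper bound. The delicate step I expect is compactness for the nonlocal minimising sequences when $\O$ is unbounded, which I would control through the $L^2$ normalisation together with the nonlocal-to-local energy convergence to extract an $H^1_0$ weak limit supporting the $\Gamma$-liminf inequality.
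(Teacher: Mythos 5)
Your overall framework --- reduce every case to the Rayleigh quotient via Theorems \ref{bcv-pev-thm1}--\ref{bcv-pev-thm2}, use constants and concentrated bumps for the $-\sup_\O a$ limits, and a Bourgain--Brezis--Mironescu type scaling for $m=2$ --- is the paper's, and your treatment of the cases $0<m\le 2$, $\sigma\to+\infty$ and $0\le m<2$, $\sigma\to 0$, as well as the monotonicity statement (a change-of-variables rephrasing of Proposition \ref{bcv-pev-prop-scal-eq}), is correct. Two steps, however, do not go through as written.

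First, for $m=0$, $\sigma\to+\infty$: Riemann--Lebesgue gives $\langle J_\sigma\ast\psi,\psi\rangle\to 0$ for each \emph{fixed} $\psi\in L^2$, which only yields the upper bound $\limsup_\sigma\lambda_v\le\inf_\psi\bigl(1-\a(\psi)\bigr)=1-\sup_\O a$. The matching lower bound requires a control of $\iint J_\sigma\psi\psi/\|\psi\|_2^2$ that is \emph{uniform in} $\psi$, and you cannot get one from Fourier analysis: the $L^2\to L^2$ norm of convolution with $J_\sigma$ is $\sup_\xi\widehat J(\sigma\xi)=\widehat J(0)=1$ for every $\sigma$ (take $\psi$ spread over a region much larger than $\sigma$); concretely, for $\O=\R^N$ and $a\equiv\nu$ constant one has $\lambda_v=\inf_\psi\tfrac12\iint J_\sigma(\psi(x)-\psi(y))^2/\|\psi\|_2^2-\nu=-\nu$ for every $\sigma$, so the exchange of $\inf_\psi$ and $\lim_\sigma$ is not innocent. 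The paper proves a quantitative, $\psi$-independent bound $\iint J_\sigma\psi\psi\le\sqrt{\|J_\sigma\|_\infty}\,\|\psi\|_2^2$ by Cauchy--Schwarz (transparent on bounded domains, where $\|J_\sigma*\psi\|_{L^2(\O)}\le|\O|^{1/2}\|J_\sigma\|_{L^2}\|\psi\|_{L^2}$); some such uniform estimate is what your argument is missing.

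Second, for $m=2$, $\sigma\to 0$, your upper bound via the Ponce pointwise limit on $C^\infty_c(\O)$ matches the paper, but the lower bound is only named, and the route you name is exactly where it fails. On an unbounded $\O$ the infimum defining $\lambda_v$ need not be attained; near-minimisers have bounded nonlocal energy and are therefore precompact only in $L^2_{loc}$, and their mass may escape to infinity, in which case the $\Gamma$-liminf and a weak $H^1_0$ limit give no control on $\int_\O a\psi^2$ and hence none on $\lambda_1$. The paper sidesteps compactness entirely: it uses $\lambda_p=\lambda_p''$ to produce a compactly supported continuous function $\psi_\sigma$ with $\opmem{\psi_\sigma}{2,\O}+(a+\lambda_p+\delta)\psi_\sigma\ge 0$, mollifies it --- this is where $a\in C^{0,\alpha}$ is actually used, to bound the commutator $\eta_\tau\star(a\psi_\sigma)-a\,\eta_\tau\star\psi_\sigma$ by $\kappa\tau^\alpha\tilde\varphi_\sigma$, not in the upper bound as you suggest --- and then runs a Taylor/BBM estimate bounding $K_{2,N}\|\nabla\varphi_\sigma\|_2^2$ by $(1+\theta)$ times the nonlocal energy plus an explicit remainder $R(\sigma)\to0$, the normalisation $\|\psi_\sigma\|_2=\|\varphi_\sigma\|_2$ making any convergence of the test functions themselves unnecessary. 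Without this (or a genuine concentration-compactness substitute), your $m=2$ lower bound is a gap.
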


Note that the results hold for any  domains $\O$, so the results holds true in particular for $\O=\R^N$. Having established the asymptotic limits of the principal eigenvalue $\lambda_p(\m_{\sigma,2,\O}+a)$, it is natural to ask whether  similar results hold for the corresponding eigenfunction $\varphi_{\sigma,p}$ when it exists.
In this direction, we prove that for  $m=2$,  such convergence does occur :

\begin{theorem}\label{bcv-pev-thm5}
 Let $\O$ be any domain  and assume  that $J$ and $a$  satisfy \eqref{hyp1} -- \eqref{hyp3}. Assume further that $J$ is even and of unit mass. 
  Then there exists $\sigma_0$ such that for all $\sigma \le \sigma_0$, there exists a positive principal eigenfunction $\varphi_{p,\sigma}$ associated to $\lambda_p(\m_{\sigma,2,\O}+a)$. In addition, when $\varphi_{p,\sigma}\in L^2(\O)$ for all $\sigma\le\sigma_0$, we have 
  $$\varphi_{p,\sigma}\to \varphi_1 \quad \text{ in } \quad L_{loc}^2(\O), $$
where $\varphi_1\in H_0^1(\O)$ is a positive principle eigenfunction associated to $\lambda_1\left(\frac{D_2(J)}{2N}\Delta +a\right)$.
 \end{theorem}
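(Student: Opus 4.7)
My plan is in three steps: first produce the positive eigen-element $\varphi_{p,\sigma}$ for all small $\sigma$; then use the self-adjoint Rayleigh representation to obtain a uniform nonlocal energy bound and extract a strong $L^2_{loc}$ limit; finally pass to the limit in the eigenvalue equation to identify this limit with $\varphi_1$.

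For existence, rewrite the eigenvalue equation as
$$\frac{1}{\sigma^{N+2}}\int_\O J\!\Bigl(\frac{x-y}{\sigma}\Bigr)\varphi(y)\,dy=\Bigl(\frac{1}{\sigma^2}-a(x)-\lambda_p(\sigma)\Bigr)\varphi(x).$$
A positive principal eigen-pair exists as soon as the coefficient on the right is uniformly positive, i.e.\ $\lambda_p(\sigma)<\sigma^{-2}-\sup_\O a$, since the equation can then be recast as a Krein--Rutman fixed-point problem for a positive integral operator, with a standard monotone passage from bounded exhaustions $\O\cap B_R$ to $\O$ in the unbounded case. Theorem \ref{bcv-pev-thm4} gives $\lambda_p(\sigma)\to\lambda_1(\tfrac{D_2(J)}{2N}\Delta+a)<+\infty$ whereas $\sigma^{-2}-\sup_\O a\to+\infty$, so the required inequality holds for all $\sigma\le\sigma_0$ sufficiently small.

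Normalize $\nlto{\varphi_{p,\sigma}}=1$. Since $J$ is even and $p(x):=\sigma^{-2}j_\sigma(x)\le\sigma^{-2}\in L^\infty(\O)$, where $j_\sigma(x):=\sigma^{-N}\int_\O J((x-y)/\sigma)\,dy$, Theorems \ref{bcv-pev-thm1}--\ref{bcv-pev-thm2} give $\lambda_p=\lambda_v$, and a direct symmetrization of $\langle(\mb{\sigma,2,\O}+a)\varphi_{p,\sigma},\varphi_{p,\sigma}\rangle=-\lambda_p(\sigma)$ yields
\begin{align*}
\lambda_p(\sigma)&=\frac{1}{2\sigma^{N+2}}\iint_{\O\times\O}J\!\Bigl(\frac{x-y}{\sigma}\Bigr)\bigl[\varphi_{p,\sigma}(x)-\varphi_{p,\sigma}(y)\bigr]^2\,dx\,dy\\
&\quad+\frac{1}{\sigma^2}\int_\O\bigl(1-j_\sigma(x)\bigr)\varphi_{p,\sigma}^2(x)\,dx-\int_\O a\,\varphi_{p,\sigma}^2\,dx.
\end{align*}
The uniform upper bound on $\lambda_p(\sigma)$ from Theorem \ref{bcv-pev-thm4} therefore delivers uniform control on both the nonlocal Dirichlet energy and the boundary-penalization integral as $\sigma\to 0$. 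Bourgain--Brezis--Mironescu / Ponce compactness then gives, along a subsequence, $\varphi_{p,\sigma_n}\to\varphi$ in $L^2_{loc}(\O)$ with $\varphi\in H^1_{loc}(\O)$ and the lower-semicontinuity estimate $\tfrac{D_2(J)}{N}\int_\O|\nabla\varphi|^2\le\liminf_{n}\sigma_n^{-N-2}\iint J(\cdot/\sigma_n)[\varphi_{p,\sigma_n}(x)-\varphi_{p,\sigma_n}(y)]^2\,dx\,dy$; the boundary-penalization bound forces the zero-extension of $\varphi$ to lie in $H^1(\R^N)$, i.e.\ $\varphi\in H_0^1(\O)$.

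Finally, for any $\psi\in C_c^\infty(\O)$, a Taylor expansion exploiting the evenness and compact support of $J$ gives $\mb{\sigma,2,\O}\psi\to\tfrac{D_2(J)}{2N}\Delta\psi$ uniformly on compact subsets of $\O$. Self-adjointness rewrites the eigenvalue equation in weak form as $\int_\O\varphi_{p,\sigma}\bigl(\mb{\sigma,2,\O}\psi+a\psi+\lambda_p(\sigma)\psi\bigr)\,dx=0$, and passing to the limit yields
$$\int_\O\varphi\Bigl(\frac{D_2(J)}{2N}\Delta\psi+a\psi+\lambda_1\psi\Bigr)\,dx=0\qquad\text{for every }\psi\in C_c^\infty(\O).$$
Thus $\varphi\in H_0^1(\O)$ is a non-negative weak eigenfunction of $\tfrac{D_2(J)}{2N}\Delta+a$ with eigenvalue $\lambda_1$; the strong maximum principle gives $\varphi>0$, and the simplicity of the Dirichlet principal eigenvalue identifies $\varphi$ with $\varphi_1$ up to a multiplicative constant. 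The step I expect to be the main obstacle is ensuring that no $L^2$-mass of the $\varphi_{p,\sigma}$ escapes to $\partial\O$ or, when $\O$ is unbounded, to infinity, so that the limit $\varphi$ is nontrivial and truly belongs to $H_0^1(\O)$; this relies crucially on both the $L^2(\O)$ hypothesis and the uniform control of $\sigma^{-2}\int_\O(1-j_\sigma)\varphi_{p,\sigma}^2$ obtained above.
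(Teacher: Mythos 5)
Your proposal follows essentially the same route as the paper: existence via the criterion $\lambda_p(\m_{\sigma,2,\O}+a)<\sigma^{-2}-\sup_{\O}a$ (Theorem \ref{bcv-pev-th-crit2}) combined with the convergence of $\lambda_p$ from Theorem \ref{bcv-pev-thm4} and an exhaustion-plus-Harnack argument for unbounded $\O$, then the Rayleigh-quotient energy bound, Bourgain--Brezis--Mironescu/Ponce compactness for $L^2_{loc}$ convergence, and passage to the limit in the self-adjoint weak formulation (the paper's Proposition \ref{bcv-pev-prop-ipp} is exactly your symmetrization). The obstacle you flag --- that $L^2_{loc}$ convergence alone does not rule out loss of the normalization in the limit --- is handled in the paper only by asserting $\|\varphi\|_{L^2(\O)}=\lim_{\sigma\to 0}\|\varphi_{p,\sigma}\|_{L^2(\O)}=1$, so flagging it does not put your argument behind the paper's own.
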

\begin{remark}
When $\O$ is bounded, then the condition $\varphi_{p,\sigma}\in L^2$ is always satisfied. Moreover, in this situation, the  above limits $\varphi_{p,\sigma} \to \varphi_1 $ as $\sigma \to 0$ holds  in  $L^2(\O)$ instead of $L^2_{loc}(\O)$.
\end{remark}

 \medskip

\subsection{Comments and straightforward generalisation}

First, we can notice that the quantity $\lambda_V$ defined by Donsker and Varadhan \cite{Donsker1975} for elliptic operators can also be defined for the operator $\lb{\O}+a$ and  is equivalent to the quantity $\lambda'_p$. 
The equality \eqref{bcv-equa-lplp'} can then be seen as the nonlocal version of the equality $\lambda_1=\lambda_V$ where $\lambda_1$ is the notion introduced by Berestycki-Nirenberg-Varadhan \cite{Berestycki1994}.

 Next, we would like to emphasize, that unlike the classical elliptic  operators,  due to the lack of a regularising effect of the operator $\lb{\O}+a$,  the quantity $\lambda_p(\lb{\O}+a)$ may not be an eigenvalue, i.e. the spectral problem
$$\oplb{\varphi}{\O}(x)+a(x)\varphi(x) + \lambda \varphi(x) =0 \quad \text{ in }  \quad \O,  $$     
  may not have a solution  in  spaces of functions like $L^p(\O), C(\O)$\cite{Coville2008b,Coville2013a,Donsker1975,Kao2010}. 
 As a consequence, even in bounded domains, the  relations between $\lambda_p$, $\lambda_p',\lambda_p''$ and $\lambda_v$ are   quite delicate to obtain.  Another difficulty inherent to the study of nonlocal operators  in unbounded domains concerns the lack  of natural \textit{ a priori } estimates for the  positive eigenfunction thus making  standard approximation schemes difficult to use in most case.

Lastly, we make  some additional comments on the assumptions we have used on the dispersal kernel  $K$.
The non-degeneracy assumption \eqref{hyp3} we are using, is related to the existence of Local Uniform Estimates \cite{Cornea1994,Cornea1995} (Harnack type estimates) for a positive solution of a nonlocal equation:
\begin{equation}\label{bcv-pev-eq-lin-har}
\oplb{\varphi}{\O}+b(x)\varphi=0 \quad \text{ in }\quad \O.
\end{equation} 
Such type of  estimates is a key tool in our analysis, in particular  in unbounded domains, where we use it to obtain  fundamental properties of the principal eigenvalue $\lambda_p(\lb{\O}+a)$,  such as the limit:
$$\lambda_p(\lb{\O}+a)=\lim_{n\to \infty} \lambda_p( \lb{\O_n}+a),$$
where $\O_n$ is a sequence of set converging to $\O$.
As observed in  \cite{Coville2012},  some local uniform estimates can also be obtained for some particular kernels $K$ which does not satisfies the non-degeneracy condition \eqref{hyp3}. For example,  for  kernels of the form $K(x,y)=\frac{1}{g^N(y)}J\left(\frac{x-y}{g(y)}\right)$ with $J$ satisfying \eqref{hyp2}  and \eqref{hyp3} and $g\ge 0$ a bounded function such that $\{x|g(x)=0\}$ is a bounded set and with  Lebesgue measure zero,  some  local uniform estimates can be derived for positive solutions of \eqref{bcv-pev-eq-lin-har}. As a consequence, the Theorems \ref{bcv-pev-thm1} and \ref{bcv-pev-thm2} hold true for such kernels. 
We have also observed that the  condition \eqref{hyp3} can be slightly be relaxed and the Theorems \ref{bcv-pev-thm1} and \ref{bcv-pev-thm2} hold true   for kernels $K$ such that, for some positive integer $p$, the kernel $K_p$ defined  recursively by :
\begin{align*}
&K_1(x,y):=K(x,y),\\
&K_{n+1}(x,y):=\int_{\O} K_{n}(x,z)K_1(z,y)\,dz \quad \text{for } \quad n\ge 1,
\end{align*}
satisfies the non-degeneracy condition \eqref{hyp3}.
 
For a convolution operator, i.e. $K(x,y):=J(x-y)$, this last condition is optimal. It is related to a geometric property of the convex hull of $\{y\in\R^N| J(y)>0\}$: 
\smallskip

\textit{$K_p$ satisfies \eqref{hyp3} for some $p\in \N$   if and only if the convex hull of $\{y\in\R^N| J(y)>0\}$ contains $0$.}
\smallskip

Note that if a relaxed  assumption on the lower bound  of  the  non-degeneracy condition satisfied by $K$ appears  simple to find, the condition on the support of $K$ seems quite tricky to relax. To tackle this problem,  it is tempting to investigate the spectrum of  linear operators involving  the Fractional  
 Laplacian, $\Delta^\alpha$:
\begin{align*}
&\flap{\varphi}{\alpha}:=C_{N,\alpha} P.V.\left(\int_{\O}\frac{\varphi(y)-\varphi(x)}{|x-y|^{N+2\alpha}}dy \right),
&\varphi\equiv 0 \quad \text{ in }\quad \R^N\setminus \O  
\end{align*}
That is, to look for the properties of the principal eigenvalue of the spectral problem:
\begin{equation}\label{bcv-pev-eq-fraclap}
\flap{\varphi}{\alpha} +(a+\lambda)\varphi=0 \quad \text{ in }\quad \O. 
\end{equation}
As for  elliptic operators and $\lb{\O}+a$, analogues of $\lambda_1,\lambda_1'$ and $\lambda_0$ can be defined for $\Delta^{\alpha} +a$ and the relations between all  possible definitions can be investigated. When $\O$ is bounded or $a$ is periodic,  the different definitions are equivalent \cite{Berestycki2011}.
However,  in  the situations considered in \cite{Berestycki2011} the operator $\Delta^{\alpha} +a$ has a compact resolvent enabling the use of the Krein Rutmann Theory. Thus, the corresponding $\lambda_p$  is associated with  a positive eigenfunction, rendering  the relations much more simpler to obtain.  Moreover, in this analysis,  the regularity of the principal eigenfunction and a Harnack type inequality \cite{Cabre2014,Caffarelli2009,Tan2011} for some non negative  solution of \eqref{bcv-pev-eq-fraclap} are  again  the key ingredients in the proofs yielding to  the  inequality
$$\lambda_p'(\Delta^{\alpha}+a,\O)\le \lambda_p(\Delta^{\alpha}+a,\O)$$  for any smooth domain $\O$.

Such Harnack type inequalities are not known for  operators $\lb{\O} +a$ involving a continuous kernel $K$ with unbounded support. Furthermore, it seems that  most of the tools used to establish these Harnack estimates in the case of the Fractional Laplacian \cite{Cabre2014, Tan2011} do not apply  when we consider an operator  $\lb{\O} +a$. Thus, obtaining the inequality
$$\lambda_p'(\lb{\O}+a)\le \lambda_p(\lb{\O}+a)$$
with a more general  kernel  requires  a deeper understanding of Harnack type estimates and/or the development of new analytical tools for such type of nonlocal operators.

Nevertheless, in this direction and in dimension one, for some kernels with unbounded support, we  could obtain some inequalities between  the different notions of principal eigenvalue. Namely,

\begin{proposition}\label{bcv-pev-thm3}
Assume $N=1$ and let $\O\subset \R$ be a unbounded domain.  Assume that $K$ and $a$ satisfy \eqref{hyp1}--\eqref{hyp2}. Assume further that $K$ is symmetric   and there exists $C>0$ and $\alpha>\frac{3}{2}$ such that  $K(x,y)\le C(1+|x-y|)^{-\alpha}$. Then we have 
\begin{align*}
&\lambda_p(\lb{\O}+a)\le \lambda_v(\lb{\O}+a)\le \lambda_p'(\lb{\O} +a)\le  \lambda_p''(\lb{\O}+a).
\end{align*}
\end{proposition}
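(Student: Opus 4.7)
The statement decomposes into three inequalities. The rightmost, $\lambda_p'(\lb{\O}+a)\le \lambda_p''(\lb{\O}+a)$, is immediate from the definitions, since any admissible test function for $\lambda_p''$ lies in $C_c(\O)\subset C(\O)\cap L^\infty(\O)$ and is thus admissible for $\lambda_p'$, so the infimum defining $\lambda_p'$ is taken over a larger class. The remaining two inequalities $\lambda_p\le\lambda_v$ and $\lambda_v\le\lambda_p'$ require genuine work, and the second will be the main obstacle.

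To establish $\lambda_p(\lb{\O}+a) \le \lambda_v(\lb{\O}+a)$, I carry out the nonlocal version of the Allegretto--Piepenbrink / ground-state transform argument. Fix $\lambda < \lambda_p$ and a test function $\varphi \in C(\bar\O)$, $\varphi > 0$, with $\oplb{\varphi}{\O} + a\varphi + \lambda\varphi \le 0$. For arbitrary $\psi \in C_c(\O)$, the function $\psi^2/\varphi$ is bounded with compact support, since $\varphi$ is continuous and positive, hence bounded below on compact sets. Multiplying the supersolution inequality by $\psi^2/\varphi$, integrating, and symmetrising the nonlocal term via the symmetry of $K$, the pointwise AM--GM bound
\[
\frac{\psi(x)^2\varphi(y)}{\varphi(x)}+\frac{\psi(y)^2\varphi(x)}{\varphi(y)} \ge 2\psi(x)\psi(y)
\]
yields $\langle \oplb{\psi}{\O}+a\psi,\psi\rangle + \lambda\,\nlto{\psi}^2 \le 0$. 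Since $\alpha>1$ ensures $p(x):=\int_\O K(x,y)dy\in L^\infty(\O)$, the Rayleigh quadratic form is continuous on $L^2(\O)$, so the density of $C_c(\O)$ in $L^2(\O)$ gives $\lambda_v\ge\lambda$; letting $\lambda\uparrow\lambda_p$ concludes this step.

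For $\lambda_v(\lb{\O}+a) \le \lambda_p'(\lb{\O}+a)$, I take $\lambda > \lambda_p'$ and $\varphi \in C(\O) \cap L^\infty(\O)$, $\varphi\ge_{\not\equiv}0$, with $\oplb{\varphi}{\O}+(a+\lambda)\varphi \ge 0$. If $\varphi \in L^2(\O)$, multiplying by $\varphi$ and integrating gives directly $\lambda \ge -\langle \oplb{\varphi}{\O}+a\varphi,\varphi\rangle/\nlto{\varphi}^2 \ge \lambda_v$. Otherwise I approximate by $\psi_R := \eta_R\varphi \in L^2(\O)$, where $\eta_R$ is a smooth cutoff with $\eta_R\equiv 1$ on $[-R,R]$, supported in $[-2R,2R]$, and $|\eta_R'|\le C/R$. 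Testing the subsolution inequality against $\eta_R^2\varphi$, then symmetrising through the identity $\eta_R^2(x)+\eta_R^2(y)=2\eta_R(x)\eta_R(y)+(\eta_R(x)-\eta_R(y))^2$, leads to
\[
\lambda_v \le -\frac{\langle \oplb{\psi_R}{\O}+a\psi_R,\psi_R\rangle}{\nlto{\psi_R}^2} \le \lambda + \frac{E_R}{\nlto{\psi_R}^2},\qquad E_R := \frac{1}{2}\iint K(x,y)(\eta_R(x)-\eta_R(y))^2\varphi(x)\varphi(y)\,dxdy.
\]

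The hard part is to show $E_R/\nlto{\psi_R}^2 \to 0$ along a subsequence $R\to\infty$. Observing that $(\eta_R(x)-\eta_R(y))^2$ is supported where at least one variable lies outside $[-R,R]$, and vanishes quadratically on the transition annulus, the polynomial decay $K(x,y)\le C(1+|x-y|)^{-\alpha}$ with $\alpha\in(1,3)$ yields the uniform estimate $\int_{|y|>R}K(x,y)(1-\eta_R(y))^2\,dy = O(R^{1-\alpha})$ for $x\in[-R,R]$. Combined with $\varphi(y)\le\|\varphi\|_\infty$ and the Cauchy--Schwarz bound $\int_{|x|\le 2R}\varphi(x)\,dx\le 2\sqrt{R}\,\nlto{\psi_R}$, this gives
\[
E_R \le C\|\varphi\|_\infty\,R^{3/2-\alpha}\,\nlto{\psi_R},\qquad \frac{E_R}{\nlto{\psi_R}^2} \le C\|\varphi\|_\infty\,\frac{R^{3/2-\alpha}}{\nlto{\psi_R}}.
\]
The hypothesis $\alpha>3/2$ sends the numerator $R^{3/2-\alpha}$ to zero, while $\varphi\notin L^2$ forces $\nlto{\psi_R}\to\infty$ in the denominator, so the ratio vanishes and $\lambda_v\le\lambda$ follows. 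The condition $\alpha>3/2$ is precisely what balances the $R^{1-\alpha}$ decay of the nonlocal seminorm against the $\sqrt{R}$ loss of the $L^1\hookrightarrow L^2$ Cauchy--Schwarz passage on an interval of length $R$; with weaker decay, the commutator error $E_R$ would not be absorbed relative to the normalising denominator and the truncation argument would break down.
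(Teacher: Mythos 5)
Your proof is correct in substance, and the two halves compare differently with the paper's argument. For $\lambda_v\le\lambda_p'\le\lambda_p''$ you follow essentially the paper's route: the paper also tests the subsolution inequality against a truncation of $\varphi$ (it uses the sharp cutoff $w_n=\varphi\mathds{1}_{\O_n}$, so its error term $I_n=\int_{\O_n}\bigl(\int_{\O\setminus\O_n}K(x,y)\varphi(y)\,dy\bigr)\varphi(x)\,dx$ is the sharp-cutoff analogue of your $E_R$), splits into the cases $\varphi\in L^2(\O)$ and $\varphi\notin L^2(\O)$, and in the latter case derives from the decay $\alpha>\frac32$ the uniform bound $I_n\le C\nlp{\varphi}{2}{\O_n}$, so that $I_n/\nlp{\varphi}{2}{\O_n}^2\to0$ because the denominator diverges --- exactly your mechanism. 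For $\lambda_p\le\lambda_v$, however, your ground-state-transform (Allegretto--Piepenbrink) argument is genuinely different: the paper instead exhausts $\O$ by $\O_n=\O\cap(-n,n)$, combines domain monotonicity with the bounded-domain identity $\lambda_p(\lb{\O_n}+a)=\lambda_v(\lb{\O_n}+a)$ to get $\lambda_p(\lb{\O}+a)\le\liminf_n\lambda_v(\lb{\O_n}+a)$, and then relates $\liminf_n\lambda_v(\lb{\O_n}+a)$ to $\lambda_v(\lb{\O}+a)$ by restricting near-minimizers. Your argument is more self-contained and buys something: it never invokes the bounded-domain theory, whose proof in the paper rests on the non-degeneracy condition \eqref{hyp3}, which is \emph{not} among the hypotheses of this proposition; you only use symmetry of $K$ and $p\in L^\infty(\O)$ (guaranteed by $\alpha>1$).

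Two details should be tightened. First, the inequality $\int_{|x|\le2R}\varphi\,dx\le2\sqrt R\,\nlto{\psi_R}$ is false as written, since $\varphi\ge\psi_R$ on $[-2R,2R]$; it is correct with $\int_{|x|\le R}\varphi\,dx=\int_{|x|\le R}\psi_R\,dx$ on the left (there $\eta_R\equiv1$), which handles the part of $E_R$ with one variable in $[-R,R]$, while for the part with both variables outside $[-R,R]$ you should write $(\eta_R(x)-\eta_R(y))^2\le|\eta_R(x)-\eta_R(y)|\,(\eta_R(x)+\eta_R(y))$ and symmetrize so that the outer integrand becomes $\eta_R(x)\varphi(x)=\psi_R(x)$ and Cauchy--Schwarz applies legitimately; the claimed bound $E_R\le C\|\varphi\|_\infty R^{3/2-\alpha}\nlto{\psi_R}$ then does hold. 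Second, the estimate $O(R^{1-\alpha})$ for the truncated kernel integral is only valid for $\alpha<3$ (the weighted moment $R^{-2}\int_0^R(1+u)^{-\alpha}u^2\,du$ changes regime at $\alpha=3$); since the hypothesis gives only $\alpha>\frac32$ with no upper bound, either note that $K$ also satisfies the decay assumption for any smaller exponent in $(\frac32,\alpha)$, or replace $R^{1-\alpha}$ by $R^{-\min\{\alpha-1,2\}}\log R$, which still beats the $\sqrt R$ loss. Neither point affects the validity of the overall scheme.
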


\bigskip
\bigskip

\textit{Outline of the paper:} 
The paper is organised as follows. In Section \ref{bcv-section-pre}, we recall some  known results
and properties of the principal eigenvalue $\lambda_p(\lb{\O}+a)$. The relations between the different definitions of the principal eigenvalue, $\lambda_p, \lambda_p'$, $\lambda_p''$ and $\lambda_v$ (Theorems \ref{bcv-pev-thm1}, \ref{bcv-pev-thm2}  and Proposition  \ref{bcv-pev-thm3}) are proved in  Section  \ref{bcv-section-pev}.
Finally, in Section \ref{bcv-section-scal} we derive the asymptotic behaviour of $\lambda_p$ with respect to the different scalings of $K$ (Proposition \ref{bcv-pev-prop-scal-eq} and Theorems  \ref{bcv-pev-thm4} and \ref{bcv-pev-thm5} ).

\subsection{Notations}
To simplify the presentation of the proofs, we introduce some notations and various linear operator that we will use throughout this paper:
\begin{itemize}
\item $B_R(x_0)$  denotes the standard ball of radius $R$ centred at the point $x_0$
\item $\mathds{1}_R$ will always refer to the characteristic function of the ball $B_R(0)$.
\item $\s(\R^N)$ denotes the Schwartz space,\cite{Brezis2010} 
\item $C(\O)$ denotes the space of continuous function in $\O$, 
\item $C_c(\O)$ denotes the  space of continuous function with compact support in  $\O$.
\item For a positive integrable function $J\in \s(\R^N)$, the constant $\int_{\R^N}J(z)|z|^2\,dz$ will refer to 
$$\int_{\R^N}J(z)|z|^2\,dz:=\int_{\R^N}J(z)\left(\sum_{i=1}^Nz_i^2\right)\,dz$$
\item For a bounded set $\o\subset\R^N$, $|\o|$ will denotes its Lebesgue measure
\item For two $L^2$ functions $\varphi,\psi$, $\langle\varphi,\psi \rangle$  denotes the $L^2$ scalar product of $\psi$ and $\varphi$ 
\item For $J \in L^1(\R^N)$, $J_\sigma(z):= \frac{1}{\sigma^N}J\left(\frac{z}{\sigma}\right)$
\item We denote by $\lb{\sigma,m,\O}$ the continuous linear operator 
\begin{equation}\label{bcv-def-opl}
\begin{array}{rccl}
\lb{\sigma,m,\O}:&C(\bar \O)&\to& C(\bar \O)\\
&\varphi&\mapsto& \ds{\frac{1}{\sigma^m}\int_{\O}J_\sigma(x-y)u(y)\,dy},
\end{array}
\end{equation}
where $\O\subset \R^N$.
\item We denote by $\mb{\sigma,m,\O}$  the operator $\ds{\mb{\sigma,m,\O}:=\lb{\sigma,m,\O} - \frac{1}{\sigma^m} }$
\end{itemize}


\section{Preliminaries}\label{bcv-section-pre}

 In this section, we recall some standard results on the principal eigenvalue of the operator $\lb{\O}+a$. 
Since the early work \cite{Donsker1975} on the variational formulation of the principal eigenvalue, an intrinsic difficulty  related to the study of these quantities comes  from the possible  non-existence of a positive continuous eigenfunction associated to the definition of $\lambda_p,\lambda_p',\lambda_p''$ or to  $\lambda_v$. This means that there is not always a positive continuous eigenfunction associated to $\lambda_p, \lambda_p',\lambda_p''$ or $\lambda_v$. A simple illustration of this fact  can be found in  \cite{Coville2010, Coville2013a}.
Recently, some progress have been made in the understanding  of $\lambda_p$. In particular, some flexible criteria  have been found  to guarantee  the existence of a positive continuous eigenfunction \cite{Coville2010,Kao2010,Shen2012}.
More precisely,
\begin{theorem}[Sufficient condition \cite{Coville2010}] \label{bcv-pev-th-crit1}
Let  $\O\subset \R^N$ be a domain, $a\in C(\O)\cap L^{\infty}(\O)$ and $K\in C(\bar\O\times\bar\O)$ non negative, satisfying the condition \eqref{hyp3}. Let us denote $\nu:=\sup_{\bar \O} a$ and assume further that the function   $a$ satisfies $\frac{1}{\nu -a}\not \in L^1(\O_0)$ for some bounded domain $\O_0\subset\bar \O$. Then there exists a principal eigen-pair $(\lambda_p,\varphi_p)$ solution of
$$\oplb{\varphi}{\O}(x)+(a(x) +\lambda)\varphi(x)=0 \quad \text{ in }\quad \O.$$
 Moreover, $\varphi_p\in C(\bar \O)$,  $\varphi_p>0$ and we have the following estimate
$$ -\nu' <\lambda_p<-\nu, $$ where $\ds{\nu':=\sup_{x\in\O } \left[a(x)+ \int_{\O}K(x,y)\,dy\right]}$.
\end{theorem}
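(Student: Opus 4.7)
The plan is to reformulate the eigenvalue problem as a Krein--Rutman fixed-point problem for a compact positive operator depending on $\lambda$, then to extract $\lambda_p$ via an intermediate-value argument in which the non-integrability hypothesis produces the blow-up at $\lambda=-\nu$. For every $\lambda<-\nu$ the weight $c_\lambda(x):=-\lambda-a(x)$ is uniformly bounded below by $-\lambda-\nu>0$ on $\bar\O$, so the linear operator
$$T_\lambda\varphi(x):=\frac{1}{c_\lambda(x)}\int_\O K(x,y)\varphi(y)\,dy$$
is well defined on $C(\bar\O)$, and the equation $\oplb{\varphi}{\O}+(a+\lambda)\varphi=0$ with $\varphi>0$ becomes exactly the fixed-point equation $T_\lambda\varphi=\varphi$. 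I would first treat a bounded $\O$: continuity of $K$ on $\bar\O\times\bar\O$ makes $T_\lambda$ compact on $C(\bar\O)$, and the non-degeneracy condition \eqref{hyp3} ensures that some iterate $T_\lambda^n$ is strongly positive. Krein--Rutman then supplies a simple principal eigenvalue $\mu(\lambda)>0$ together with a positive continuous eigenfunction $\varphi_\lambda$.

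The second step is to analyse the map $\lambda\mapsto\mu(\lambda)$ on $(-\infty,-\nu)$. Monotonicity of $c_\lambda$ in $\lambda$ together with the Collatz--Wielandt characterisation shows that $\mu$ is continuous and strictly increasing, while $\mu(\lambda)\to 0$ as $\lambda\to-\infty$ is immediate from an operator-norm bound. The crucial claim is that $\mu(\lambda)\to+\infty$ as $\lambda\to-\nu^-$, and this is the single step where the hypothesis $\tfrac{1}{\nu-a}\notin L^1(\O_0)$ is used. I would establish it via the Collatz--Wielandt-type lower bound
$$\mu(\lambda)^n\ge\inf_{x\in\O}\frac{T_\lambda^n\psi(x)}{\psi(x)}$$
for suitable positive test functions $\psi$. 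A constant $\psi$ is not enough: on $\O_0$ the denominator $c_\lambda$ becomes very small precisely where $a$ is close to $\nu$, and the non-integrability of $1/(\nu-a)$ over $\O_0$ is exactly the statement that mass profiles concentrated near the maximum of $a$ produce integrals $\int\psi/(\nu-a)$ which diverge. Testing with $\psi$ carefully concentrated on $\O_0$, and iterating $T_\lambda$ a few times to spread the mass throughout $\O$ via the kernel (made possible by \eqref{hyp3}), yields the desired divergence. The intermediate value theorem then produces a unique $\lambda_p\in(-\infty,-\nu)$ with $\mu(\lambda_p)=1$, and $\varphi_p:=\varphi_{\lambda_p}\in C(\bar\O)$, positive, solves the eigenvalue equation.

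The strict upper bound $\lambda_p<-\nu$ is built into the construction. The lower bound $\lambda_p>-\nu'$ follows by testing the sup-inf characterisation of $\lambda_p$ with the constant function $\varphi\equiv 1$, which already yields $\lambda_p\ge -\nu'$; equality is ruled out by simplicity of the Krein--Rutman eigenvalue, since it would force $\varphi_p$ to be proportional to the constant $1$ and hence $a(x)+\int_\O K(x,y)\,dy\equiv\nu'$, contradicting the generic (non-constant) structure of $a+\int K(\cdot,y)\,dy$. For an unbounded $\O$ I would exhaust $\O$ by bounded subdomains $\O_R:=\O\cap B_R(0)$ containing $\O_0$, apply the bounded-domain construction to obtain $(\lambda_p^R,\varphi_p^R)$ on each $\O_R$, and pass to the limit using the monotone continuity $\lambda_p(\lb{\O_R}+a)\searrow\lambda_p(\lb{\O}+a)$ (announced in the comments of Section~1) together with the Harnack-type local uniform estimates available from \eqref{hyp3}, after normalising each $\varphi_p^R$ at a fixed interior point to extract a positive continuous limit $\varphi_p$. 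The main obstacle is clearly the blow-up step: turning the qualitative non-integrability of $1/(\nu-a)$ into a quantitative divergence $\mu(\lambda)\to+\infty$ requires the right choice of concentrated test function combined with a sufficient number of iterations of $T_\lambda$, and this is the technical core of the argument.
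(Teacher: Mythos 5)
First, note that the paper itself does not prove Theorem~\ref{bcv-pev-th-crit1}: it is recalled verbatim from \cite{Coville2010} in the Preliminaries, so there is no in-paper proof to compare against. Your overall strategy --- rewriting the problem as the fixed-point equation $T_\lambda\varphi=\varphi$ for $T_\lambda=\frac{1}{-\lambda-a}\lb{\O}$, invoking Krein--Rutman for the compact, eventually strongly positive $T_\lambda$ on a bounded domain, studying the monotone map $\lambda\mapsto\mu(\lambda):=r(T_\lambda)$ on $(-\infty,-\nu)$, and locating $\lambda_p$ by the intermediate value theorem, with an exhaustion-plus-Harnack argument for unbounded $\O$ --- is indeed the route of the cited reference and is consistent with how the paper handles similar limits (Claim~\ref{bcv-claim-testfct}, Section~5). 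Two steps, however, are genuinely gapped.

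The first is the blow-up $\mu(\lambda)\to+\infty$ as $\lambda\to-\nu^-$, which you rightly identify as the core but do not actually carry out, and the test functions you hint at do not work. For a bump $\psi$ supported on a set $A$ near the maximum of $a$, the Collatz--Wielandt bound gives $\inf_{x\in A}T_\lambda\psi(x)/\psi(x)$ of order $c_0|A|\big/\inf_{x\in A}\left(-\lambda-a(x)\right)$, which remains \emph{bounded} as $\lambda\to-\nu$ unless $a\equiv\nu$ on all of $A$; no amount of iterating $T_\lambda$ fixes this. The hypothesis enters through the choice $\psi=(-\lambda-a)^{-1}\mathds{1}_{\O_0\cap B}$, where $B$ is a ball of radius $r_1/2$ selected by a finite covering of $\O_0$ so that $\frac{1}{\nu-a}\notin L^1(\O_0\cap B)$; then for $x\in\mathrm{supp}\,\psi$ one has $B_{r_1}(x)\supset B$, and \eqref{hyp3} yields $T_\lambda\psi(x)/\psi(x)\ge c_0\int_{\O_0\cap B}\frac{dy}{-\lambda-a(y)}\to+\infty$ by monotone convergence (off $\mathrm{supp}\,\psi$ the inequality $T_\lambda\psi\ge\mu\psi$ is automatic, so no iteration is needed). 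You also omit the identification of the value solving $\mu(\lambda)=1$ with the sup-definition \eqref{bcv-pev-def}; this requires the touching/maximum-principle argument of Lemma~\ref{bcv-lem-lp'-eq-lp}.

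The second gap is the strict bound $-\nu'<\lambda_p$. The constant test function honestly gives only $\lambda_p\ge-\nu'$, and your appeal to the ``generic non-constant structure'' of $a+\int_\O K(\cdot,y)\,dy$ is not an argument: nothing in the hypotheses excludes $a(x)+\int_\O K(x,y)\,dy\equiv\nu'$. In that case the constant function is an exact eigenfunction with eigenvalue $-\nu'$, so $\lambda_p=-\nu'$; for instance $K\equiv c>0$ and $a\equiv\mathrm{const}$ on a bounded $\O$ with $\mathrm{diam}(\O)<r_0$ satisfies \eqref{hyp1}--\eqref{hyp3} and the non-integrability condition (since $\nu-a\equiv0$), yet equality holds. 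So you should either impose and use an explicit non-degeneracy assumption to run the Krein--Rutman comparison $T_{-\nu'}\mathds{1}\le\mathds{1}$ strictly, or content yourself with $\lambda_p\ge-\nu'$, which is all the present paper ever uses (item (iv) of Proposition~\ref{bcv-prop-pev}).
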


This criteria is almost optimal, in the sense that we can construct example of operator $\lb{\O}+a$ with $\O$ bounded and $a$ such that $\frac{1}{\nu -a}\in L^1 (\O )$ and where $\lambda_p(\lb{\O}+a)$ is not an eigenvalue in $C(\bar \O)$, see \cite{Coville2010,Kao2010,Shen2012}. 

When $\O$ is bounded, sharper results have been recently derived in \cite{Coville2013a} where it is proved that  $\lambda_p(\lb{\O}+a)$ is always an eigenvalue in the Banach space of positive measure, that is, we can always find a positive measure $d\mu_p$ that is solution in the sense of measure of 

\begin{equation}
\oplb{d\mu_p}{\O}(x)+a(x)d\mu_p(x)+\lambda_p d\mu_p(x)=0. \label{bcv-pev-eq-measure}
\end{equation}

In addition, we have the following characterisation of $\lambda_p$:

\begin{theorem}[\cite{Coville2013,Coville2013a}]\label{bcv-pev-th-crit2}
$\lambda_p(\lb{\O}+a)$ is an eigenvalue in $C(\bar \O)$ if and only if   $\ds{\lambda_p(\lb {\O}+a)<-\sup_{x\in\O}a(x)}$.
\end{theorem}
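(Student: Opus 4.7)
The plan is to handle the two implications separately, starting with the forward direction.

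For the forward implication, suppose $(\lambda_p,\varphi_p)$ is an eigen-pair with $\varphi_p\in C(\bar\O)$, $\varphi_p>0$. The eigenvalue equation rewrites as
$$-(a(x)+\lambda_p)=\frac{\oplb{\varphi_p}{\O}(x)}{\varphi_p(x)},$$
and the non-degeneracy condition \eqref{hyp3} forces the right-hand side to be strictly positive at every $x\in\O$, yielding $\lambda_p\le-\sup_\O a$. I would argue by contradiction to upgrade this to a strict inequality: if $\lambda_p=-\sup_\O a$, then along any sequence $x_n\in\O$ with $a(x_n)\to\sup_\O a$ the ratio $\oplb{\varphi_p}{\O}(x_n)/\varphi_p(x_n)$ would tend to $0$, contradicting a uniform positive lower bound of this ratio that follows from combining \eqref{hyp3} with Harnack-type local uniform estimates for positive solutions of $\oplb{\varphi}{\O}+b\varphi=0$, whose availability under \eqref{hyp3} is recalled in Section~1.

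For the reverse implication when $\O$ is bounded, set $\mu^{\ast}:=-\lambda_p$, which by hypothesis satisfies $\mu^{\ast}>\sup_\O a$. For every $\mu>\sup_\O a$ I would study the operator
$$T_\mu\varphi:=\frac{\oplb{\varphi}{\O}}{\mu-a}$$
acting on $C(\bar\O)$: it is compact because $\O$ is bounded and $K$ is continuous, and strongly positive by \eqref{hyp3}. The Krein--Rutman theorem then supplies a simple principal eigenvalue $r(\mu)>0$ with a strictly positive eigenfunction $\varphi_\mu$. A direct computation shows that $r(\mu)=1$ is equivalent to $-\mu$ being an eigenvalue of $\lb{\O}+a$ with a positive continuous eigenfunction, and a Collatz--Wielandt argument combined with the definition of $\lambda_p$ pins this value of $\mu$ down to $\mu^{\ast}$. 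Continuity of $\mu\mapsto r(\mu)$, strict monotonicity and the limit $r(\mu)\to 0$ as $\mu\to+\infty$, together with the assumption $\lambda_p<-\sup_\O a$, guarantee the existence of $\mu^{\ast}>\sup_\O a$ with $r(\mu^{\ast})=1$, producing the sought eigen-pair.

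For a general possibly unbounded $\O$, I would exhaust $\O$ by bounded subdomains $\O_n\uparrow\O$ and use the property $\lambda_p(\lb{\O_n}+a)\searrow\lambda_p(\lb{\O}+a)$ recalled in Section~\ref{bcv-section-pre}. Since $\lambda_p<-\sup_\O a$, for $n$ large enough $\lambda_p(\lb{\O_n}+a)<-\sup_{\O_n}a$, so the bounded-domain argument yields positive eigenfunctions $\varphi_n\in C(\bar\O_n)$ at the eigenvalue $\lambda_n:=\lambda_p(\lb{\O_n}+a)$. After normalising by $\varphi_n(x_0)=1$ at a fixed point $x_0$, I extract a locally uniformly convergent subsequence---the required equicontinuity follows from rewriting $\varphi_n=\oplb{\varphi_n}{\O_n}/(-a-\lambda_n)$ and exploiting the continuity of $K$---and pass to the limit in the eigenvalue equation by dominated convergence to recover $\varphi_p\in C(\bar\O)$ solving the problem on $\O$.

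The main obstacle is to show that the limit $\varphi_p$ stays strictly positive throughout $\O$: uniform-in-$n$ degeneration of $\varphi_n$ on compact sets must be ruled out. This is exactly what Harnack-type local uniform estimates, derivable under \eqref{hyp3}, provide. The very same ingredient underlies the strict inequality in the forward direction, so both halves of the proof rest on this common analytical device.
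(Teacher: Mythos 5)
First, note that the paper does not actually prove Theorem~\ref{bcv-pev-th-crit2}: it is recalled verbatim from \cite{Coville2013,Coville2013a}, so there is no in-paper argument to compare yours against. Judged on its own terms, your outline follows what is essentially the standard route of those references: the forward implication from the sign of $\oplb{\varphi_p}{\O}/\varphi_p$, the bounded case via Krein--Rutman applied to $T_\mu=\oplb{\cdot}{\O}/(\mu-a)$ and an intermediate-value argument on the spectral radius $r(\mu)$, and the unbounded case by exhaustion, normalisation, the Harnack estimate of \cite{Coville2012} and a diagonal extraction --- the latter being exactly the scheme the paper itself uses in the proof of Claim~\ref{bcv-claim-testfct} and in Section~5. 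The logic of the intermediate-value step is sound: if $r(\mu)<1$ for some $\mu\in(\sup_\O a,-\lambda_p)$, the Krein--Rutman eigenfunction of $T_\mu$ is an admissible test function in \eqref{bcv-pev-def} forcing $\lambda_p\ge-\mu$, a contradiction; combined with $r(\mu)\to0$ and continuity this produces $\mu^*$ with $r(\mu^*)=1$, and the resulting eigenfunction shows $-\mu^*=\lambda_p$.

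Two points deserve more care than you give them. First, $T_\mu$ is \emph{not} strongly positive under \eqref{hyp3}: a nonnegative $\varphi$ supported at distance greater than $r_0$ from $x$ gives $T_\mu\varphi(x)=0$. What is true is that some iterate $T_\mu^k$ is strongly positive (a chain-of-balls argument using the lower bound in \eqref{hyp3} and the connectedness of the bounded domain $\O$), which is enough for the irreducible version of Krein--Rutman; this also supplies the needed $r(\mu)>0$. Second, in the forward direction your ``uniform positive lower bound on the ratio'' is not automatic: the Harnack constant of \cite{Coville2012} degenerates as the reference point approaches $\partial\O$ or infinity, which is precisely where a maximising sequence for $a$ may accumulate. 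For bounded $\O$ you do not need Harnack at all: $\varphi_p\in C(\bar\O)$, $\varphi_p>0$ gives $0<\delta\le\varphi_p\le M$, so $(\sup_\O a-a(x_n))\varphi_p(x_n)=\oplb{\varphi_p}{\O}(x_n)\ge c_0\delta\,|\O\cap B_{r_1}(x_n)|$ cannot tend to $0$. For unbounded $\O$ this step (and, in the reverse direction, the continuity of the limit function up to $\bar\O$ and the passage to the limit in the integral term near $\partial\O$) genuinely requires the quantified, locally uniform form of the Harnack inequality rather than a generic appeal to it; as stated, your sketch leaves that dependence implicit.
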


We refer to  \cite{Coville2013a} for a more complete description of the positive solution associated to $\lambda_p$ when the domain $\O$ is bounded.

  Now, we recall some properties of  $\lambda_p$ that we  constantly use throughout this paper:
 \begin{proposition}\label{bcv-prop-pev}
\begin{itemize}
\item[(i)] Assume $\O_1\subset\O_2$, then for the two operators $\lb{\O_1}+a$ and $\lb{\O_2}+a$

respectively defined on $C(\O_1)$ and $C(\O_2)$, we have :
 $$
\lambda_p(\lb{\O_1}+a)\ge \lambda_p(\lb{\O_2}+a).
$$
\item[(ii)]For a fixed $\O$  and assume that $a_1(x)\ge a_2(x)$, for all $x \in \O$. Then  
$$
\lambda_p(\lb{\O}+a_2)\ge\lambda_p(\lb{\O}+a_1).
$$

\item[(iii)] $\lambda_p(\lb{\O}+a)$ is Lipschitz continuous with respect to  $a$. More precisely,
$$|\lambda_p(\lb{\O}+a)- \lambda_p(\lb{\O}+b)|\le \|a-b\|_{\infty}$$

\item[(iv)]  The following estimate always holds
$$-\sup_{\O}\left(a(x)+\int_{\O}K(x,y)\,dy\right)\le \lambda_p(\lb{\O}+a)\le -\sup_{\O}a.$$
\end{itemize}
\end{proposition}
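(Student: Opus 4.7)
All four properties follow rather directly from the supremum definition of $\lambda_p$ in \eqref{bcv-pev-def}, together with one elementary translation identity: for any constant $c\in\R$, a function $\varphi>0$ satisfies $\oplb{\varphi}{\O}+(a+c)\varphi+\lambda\varphi\le 0$ if and only if it satisfies $\oplb{\varphi}{\O}+a\varphi+(\lambda+c)\varphi\le 0$, so $\lambda_p(\lb{\O}+a+c)=\lambda_p(\lb{\O}+a)-c$. I will use this repeatedly.

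For \textit{(i)}, given any admissible $\lambda$ for $\lambda_p(\lb{\O_2}+a)$ with witness $\varphi\in C(\bar\O_2)$, $\varphi>0$, I would restrict $\varphi$ to $\bar\O_1$. Since $K\ge 0$ and $\varphi>0$, one has $\oplb{\varphi}{\O_1}(x)=\int_{\O_1}K(x,y)\varphi(y)\,dy\le\int_{\O_2}K(x,y)\varphi(y)\,dy=\oplb{\varphi}{\O_2}(x)$ for every $x\in\O_1$. Thus the inequality $\oplb{\varphi}{\O_1}+a\varphi+\lambda\varphi\le 0$ holds in $\O_1$ and $\lambda$ is admissible for $\lambda_p(\lb{\O_1}+a)$. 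Taking suprema gives (i). For \textit{(ii)}, if $a_1\ge a_2$ and $\varphi>0$ is a witness for an admissible $\lambda$ of $\lambda_p(\lb{\O}+a_1)$, then $\oplb{\varphi}{\O}+a_2\varphi+\lambda\varphi\le\oplb{\varphi}{\O}+a_1\varphi+\lambda\varphi\le 0$, so $\lambda$ is also admissible for $\lambda_p(\lb{\O}+a_2)$, yielding the desired inequality.

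For \textit{(iii)}, I would combine (ii) with the translation identity. Setting $\eps:=\|a-b\|_\infty$, one has $b-\eps\le a\le b+\eps$ pointwise, so (ii) gives $\lambda_p(\lb{\O}+b+\eps)\le\lambda_p(\lb{\O}+a)\le\lambda_p(\lb{\O}+b-\eps)$. By the translation identity $\lambda_p(\lb{\O}+b\pm\eps)=\lambda_p(\lb{\O}+b)\mp\eps$, which yields $|\lambda_p(\lb{\O}+a)-\lambda_p(\lb{\O}+b)|\le\eps$.

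For \textit{(iv)}, both bounds come from choosing explicit test functions in the $\sup\inf$ formulation. Using $\varphi\equiv 1\in C(\bar\O)$ as a witness, the constraint $\oplb{1}{\O}(x)+a(x)+\lambda\le 0$ for all $x\in\O$ gives the admissible value $\lambda=-\sup_{x\in\O}\bigl(a(x)+\int_\O K(x,y)\,dy\bigr)$, which establishes the lower bound. For the upper bound, for any admissible $\lambda$ with witness $\varphi>0$ I would use that $\oplb{\varphi}{\O}\ge 0$ pointwise, so the admissibility inequality forces $\lambda\varphi(x)\le-a(x)\varphi(x)$, i.e.\ $\lambda\le-a(x)$ at every $x\in\O$, and hence $\lambda\le-\sup_{\O}a$; passing to the supremum over $\lambda$ yields $\lambda_p(\lb{\O}+a)\le-\sup_{\O}a$. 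No step here is delicate; the only point worth checking carefully is the translation identity underlying (iii), which is immediate from the linearity of the inequality in $\lambda$.
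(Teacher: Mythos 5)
Your proof is correct and complete. The paper does not actually prove Proposition \ref{bcv-prop-pev} (it refers to the earlier works of Coville for $(i)$--$(iv)$), and your argument — monotonicity of the admissible set under domain restriction and under ordering of the zero-order terms, the translation identity $\lambda_p(\lb{\O}+a+c)=\lambda_p(\lb{\O}+a)-c$ for $(iii)$, and the constant test function together with the positivity of $\oplb{\varphi}{\O}$ for $(iv)$ — is exactly the standard direct deduction from the definition that those references use.
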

We refer to \cite{Coville2010,Coville2015} for the proofs of $(i)-(iv)$.

Lastly, we prove some limit  behaviour of $\lambda_p(\lb{\O}+a)$ with respect to the domain $\O$. Namely, we show
\begin{lemma}\label{bcv-lem-lim}
Let $\O$ be a domain and assume that  $a$ and $K$ satisfy \eqref{hyp1}--\eqref{hyp3}. 
Let $(\O_n)_{n \in \N}$ be a sequence of subset of $\O$ so that $\lim_{n\to \infty}\O_n =\O$, $\O_n \subset \O_{n+1}$.
Then we have 
$$ \lim_{n\to \infty}\lambda_p(\lb{\O_n}+a)=\lambda_p(\lb{\O}+a)$$ 
\end{lemma}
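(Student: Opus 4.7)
Denote $\lambda_n := \lambda_p(\lb{\O_n}+a)$. The strategy is to prove the two inequalities $\lim_n \lambda_n \geq \lambda_p(\lb{\O}+a)$ and $\lim_n \lambda_n \leq \lambda_p(\lb{\O}+a)$ separately; monotonicity handles the first, and a passage to the limit in the eigenvalue inequality, driven by Harnack-type estimates and the compact support of $K$, handles the second.

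The first inequality is immediate. Since $\O_n \subset \O_{n+1} \subset \O$, Proposition \ref{bcv-prop-pev}(i) yields $\lambda_n \geq \lambda_{n+1} \geq \lambda_p(\lb{\O}+a)$, so $(\lambda_n)$ is monotone non-increasing and bounded below, converging to some $\lambda_\infty \geq \lambda_p(\lb{\O}+a)$.

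For the reverse inequality, fix $\varepsilon > 0$ and set $\mu := \lambda_\infty - \varepsilon$. Since $\mu < \lambda_\infty \le \lambda_n$ for every $n$, the definition \eqref{bcv-pev-def} supplies $\varphi_n \in C(\bar\O_n)$, $\varphi_n > 0$, with
$$\oplb{\varphi_n}{\O_n} + (a + \mu)\varphi_n \leq 0 \quad \text{in } \O_n.$$
Fix $x_0 \in \O_1$ and normalize $\varphi_n(x_0) = 1$. By the Harnack-type local uniform estimates that the non-degeneracy condition \eqref{hyp3} provides for positive supersolutions of \eqref{bcv-pev-eq-lin-har} (as discussed in the introduction), on every compact $K \subset \O$ the family $(\varphi_n)_{n \geq n_K}$ is uniformly bounded above and bounded below away from zero. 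A standard diagonal extraction then yields a subsequence (not relabelled) converging locally uniformly on $\O$ to a continuous positive function $\varphi$ with $\varphi(x_0)=1$. Invoking \eqref{hyp3} once more, $K(x,\cdot)$ is supported in $B_{r_0}(x)$; for every $x \in \O$ the set $B_{r_0}(x) \cap \O$ is eventually contained in $\O_n$, and the dominated convergence theorem gives $\oplb{\varphi_n}{\O_n}(x) \to \oplb{\varphi}{\O}(x)$. Passing to the limit in the inequality one obtains $\oplb{\varphi}{\O} + (a+\mu)\varphi \leq 0$ in $\O$.

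It remains to promote $\varphi$, a priori only in $C(\O)$, to an admissible test function in $C(\bar\O)$ as required by \eqref{bcv-pev-def}. This is done by extending $\varphi$ continuously across $\partial\O$ to some $\tilde\varphi \in C(\bar\O)$ that agrees with $\varphi$ outside a thin neighbourhood of $\partial\O$ and is engineered so that the induced perturbation of the inequality is absorbed by the $\varepsilon$-slack; the finite range $r_0$ of $K$ confines the effect of such a modification to the same boundary layer, and the Harnack-type lower bound on $\varphi_n$ near $\partial\O_n$ quantifies the size of $\varphi$ there, making the control possible. One then concludes $\lambda_p(\lb{\O}+a) \geq \mu = \lambda_\infty - \varepsilon$, and letting $\varepsilon \to 0$ closes the proof. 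The main obstacle is precisely this final step: the nonlocality of $\lb{\O}$ rules out the naive zero-extension available for elliptic operators, since extending $\varphi_n$ or $\varphi$ by zero destroys positivity and breaks the integral inequality, and extending by a positive constant injects spurious mass via $K$; the Harnack estimates furnished by \eqref{hyp3} and the compact support of $K$ are exactly what permit a controlled continuous extension.
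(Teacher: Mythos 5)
The skeleton of your argument (monotonicity for one inequality, a limiting test function for the other) is the paper's, but the compactness step contains a genuine gap. The functions $\varphi_n$ you obtain from the definition of $\lambda_p(\lb{\O_n}+a)$ are only positive \emph{supersolutions}, $\oplb{\varphi_n}{\O_n}\le -(a+\mu)\varphi_n$, and for supersolutions the hypothesis \eqref{hyp3} yields only a one--sided estimate: from $c_0\int_{\O_n\cap B_{r_1}(x)}\varphi_n(y)\,dy\le\oplb{\varphi_n}{\O_n}(x)\le \|a+\mu\|_\infty\varphi_n(x)$ one gets that pointwise values dominate local averages, which together with $\varphi_n(x_0)=1$ gives local $L^1$ bounds by a chaining argument, but \emph{neither} local uniform upper bounds \emph{nor} equicontinuity. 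Indeed a continuous supersolution may carry a spike of height $M$ on a ball of radius $\delta$ at a cost of only $O(M\delta^N)$ to the nonlocal term, so $M$ can be taken arbitrarily large as $\delta\to0$ without violating the inequality or the normalization; consequently no locally uniformly convergent subsequence need exist, and the limit object is a priori only a measure, not the continuous positive $\varphi$ you need. The two--sided Harnack estimate of Theorem 1.4 in the reference the paper cites applies to exact solutions of $\oplb{\varphi}{\O}+b\varphi=0$, not to supersolutions. This is precisely why the paper does not work with arbitrary test functions: it first perturbs $a$ into a function $a_{k_0}$ equal to $\sup_\O a$ on a small ball, so that the existence criterion of Theorem \ref{bcv-pev-th-crit1} produces genuine principal \emph{eigenfunctions} on each $\O_n$; being exact solutions, these satisfy the full Harnack inequality and the identity $\varphi_n=\oplb{\varphi_n}{\O_n}/b_n$, which delivers local uniform bounds and equicontinuity, and only then does the diagonal extraction and passage to the limit go through. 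Your proof is missing this reduction, and without it the extraction of a locally uniformly convergent subsequence is unjustified.

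Your final step --- promoting $\varphi\in C(\O)$ to $C(\bar\O)$ --- is also not carried out: you describe an extension that is ``engineered'' to be absorbed by the $\eps$--slack but give no construction. Note, however, that the paper's own Claim only produces $\varphi\in C(\O)$ and uses it directly as a test function, so this second point is an imprecision shared with (or inherited from) the source rather than the decisive flaw; the decisive flaw is the compactness argument above.
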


\begin{proof}

By a straightforward application of the monotone properties of $\lambda_p$ with respect to the domain  ((i) of Proposition \ref{bcv-prop-pev}) we get the inequality 
\begin{equation}\label{bcv-eq-lim-lpn}
\lambda_p(\lb{\O}+a)\le \lim_{n\to \infty}\lambda_p(\lb{\O_n}+a).   
\end{equation}
To prove the equality, we argue by contradiction. So, let us assume  
\begin{equation}\label{bcv-eq-lim-lpn0}
\lambda_p(\lb{\O}+a)< \lim_{n\to \infty}\lambda_p(\lb{\O_n}+a),   
\end{equation}
and choose $\lambda \in \R$  such that 
\begin{equation}\label{bcv-eq-lim-lpn1}
\lambda_p(\lb{\O}+a)<\lambda< \lim_{n\to \infty}\lambda_p(\lb{\O_n}+a).
\end{equation}

We claim 
\begin{claim}\label{bcv-claim-testfct}
There exists $\varphi>0$, $\varphi\in C(\O)$ so that $(\lambda,\varphi)$ is an adequate test function. That is, $\varphi$ satisfies 
$$ \oplb{\varphi}{\O}(x)+(a(x)+\lambda)\varphi(x)\le 0 \quad \text{in}\quad \O.$$
\end{claim}
Assume for the moment that the above claim holds. By definition of $\lambda_p(\lb{\O}+a)$, we get a straightforward contradiction   
 $$ \lambda_p(\lb{\O}+a)< \lambda\le \lambda_p(\lb{\O}+a).$$
 
Hence, $$\lim_{n\to \infty}\lambda_p(\lb{\O_n}+a)=\lambda_p(\lb{\O}+a)$$
\end{proof}
Let us  now prove  Claim \ref{bcv-claim-testfct}
\begin{proof}[Proof of Claim \ref{bcv-claim-testfct}]
By definition of $\nu:=\sup_\O a$, there exists a sequence of points $(x_k)_{k\in\N}$ such that $x_k\in\O$ and 
$|a(x_k)-\nu|<\frac{1}{k}$.  By continuity of $a$, for each $k$, there exists $\eta_k>0$ such that 
$$B_{\eta_k}(x_k)\subset \O,\quad \text{ and }\quad \sup_{B_{\eta_k}(x_k)}|a-\nu|\le \frac{2}{k}.$$

Now, let $\chi_k$ be  the following    cut-off" functions :
 $\chi_k(x):=\chi\left(\frac{\|x_k-x\|}{\eps_k}\right)$ where $\eps_k>0$ is to be chosen later on and $\chi$ is a smooth function such that $0\le \chi \le 1$, $\chi(z) = 0$ for $|z| \ge 2$ and $\chi(z) = 1$ for $|z| \le 1$. 
Finally, let us consider the continuous functions $a_k(\cdot)$, defined by   $a_k(x):=\sup\{a,(\nu-\inf_{\O}a)\chi_k(x)+\inf_{\O}a\}$.
By taking a sequence $(\eps_k)_{k \in\N}$ so that $\eps_k\le \frac{\eta_k}{2}$, $\eps_k\to 0$,  we have 
$$a_k(x)=\begin{cases}
a \quad &\text{ for } \quad x\in \O\setminus B_{2\eps_k}(x_k)\\
\nu \quad &\text{ for } \quad x\in \O\cap B_{\eps_k}(x_k)
\end{cases}
$$
and therefore 
 $$\|a-a_k\|_{\infty}\le \sup_{B_{\eta_k}(x_k)}|\nu-a|\to 0 \quad \text{ as }\quad k \to \infty.$$

By construction, for $k$ large enough, say $k\ge k_0$ , we get for all $k\ge k_0$
$$\|a-a_k\|_{\infty}\le \inf\left\{\frac{|\lambda_p(\lb{\O} +a)-\lambda|}{2}\, , \frac{|\lim_{n\to \infty}\lambda_p(\lb{\O_n} +a)-\lambda|}{2}\right\}.$$
Since $\O_n\to \O$ when $n\to \infty$, there exists $n_0:=n(k_0)$ so that 

  $$B_{\eta_{k_0}}(x_{k_0})\subset \O_n \quad \text{for all}\quad n\ge n_0.$$

On the othre hand, from the Lipschitz continuity of $\lambda_p(\lb{\O}+a)$ with respect to $a$ ((iii) Proposition \ref{bcv-prop-pev}),   inequality \eqref{bcv-eq-lim-lpn1} yields     
  
\begin{equation}\label{bcv-eq-lim-lpn2}
\lambda_p(\lb{\O}+a_{k_0})<\lambda< \lim_{n\to \infty}\lambda_p(\lb{\O_n}+a_{k_0}). 
\end{equation}

Now, by construction we see that for   $n\ge n_0$, $\sup_{\O_{n}}a_{k_0}=\sup_{\O}a_{k_0}=\nu$ and since $a_{k_0}\equiv \nu$ in  $B_{\frac{\eps_{k_0}}{2}}(x_{k_0})$,  for all $n\ge n_0$ the function $\frac{1}{\nu-a_{k_0}}\not \in L^1_{loc}(\bar \O_n)$. Therefore, by Theorem \ref{bcv-pev-th-crit1}, for all $n\ge n_0$ there exists 
$\varphi_n \in C(\bar \O_n)$, $\varphi_n>0$ associated with $\lambda_p(\lb{\O_n}+a_{k_0})$. 
 
Moreover, since $x_{k_0}\in \bigcap_{n\ge n_0}\O_n$, for all $n\ge n_0$,   we can normalize $\varphi_n$ by 
$\varphi_n(x_{k_0})=1$. Recall that for all $n\ge n_0$, $\varphi_n$ satisfies
$$
\oplb{\varphi_n}{\O_n}(x)+(a_{k_0}(x)+\lambda_p(\lb{\O_n}+a_{k_0}(x)))\varphi_n(x)=0 \quad \text{in}\quad \O_n, 
$$
so from \eqref{bcv-eq-lim-lpn2}, it follows that $(\varphi_n,\lambda)$ satisfies
\begin{equation}\label{bcv-eq-lim-lpn3}
\oplb{\varphi_n}{\O_n}(x)+(a_{k_0}(x)+\lambda)\varphi_n(x)<\oplb{\varphi_n}{\O_n}(x)+(a_{k_0}(x)+\lambda_p(\lb{\O_n}+a_{k_0}))\varphi_n(x)=0 \quad \text{in}\quad \O_n. 
\end{equation}

Let us now define   $b_n(x):=-\lambda_{p}(\lb{\O_n}+a_{k_0}(x))-a_{k_0}(x)$, then for all $n\ge n_0,$ $\varphi_{n}$ satisfies  
\begin{equation}\label{bcv-eq-lim-lpn4}
  \oplb{\varphi_{n}}{\O_n}(x)=b_n(x)\varphi_{n}(x) \quad \text{in}\quad \O_n.
  \end{equation}
 By construction, for  $n\ge n_0$, we have $b_n(x)\ge-\lambda_{p}(\lb{\O_{n_0}}+a_{k_0}(x))-\nu>0 $.  Therefore, since $K$ satisfies the condition \eqref{hyp3}, the Harnack inequality (Theorem 1.4  in \cite{Coville2012}) applies to $\varphi_{n}$. Thus, for $n\ge n_0$ fixed and for any compact set $\o \subset \subset \O_n$ there exists a constant $C_n(\o)$ such that 
$$\varphi_{n}(x)\le C_n(\o)\varphi_{n}(y) \quad \forall \; x,y \in \o.$$

Moreover, the constant $C_n(\o)$ only depends  on $\delta_0<\frac{d(\o,\partial \O)}{4}$, $c_0$,  $\bigcup_{x\in \o}B_{\delta_0}(x)$  and $\inf_{\O_n}b_n$. Furthermore, this constant is   decreasing  with respect to  $\inf_{\O_n}b_n$.
Notice that for all $n\ge n_0$, the function $b_n(x)$ being uniformly bounded from below by a constant independent of $n$, the constant $C_n$ is bounded from above independently of $n$ by a constant $C(\o)$.  Thus, we have 
$$\varphi_{n}(x)\le C(\o)\varphi_{n}(y) \quad \forall \quad x,y \in \o.$$ 

From a standard argumentation, using the normalization $\varphi_{n}(x_{k_0})=1$, we deduce that  the sequence $(\varphi_{n})_{n\ge n_0}$  is uniformly bounded in $C_{loc}(\O)$ topology and is locally uniformly equicontinuous. Therefore, from a standard diagonal extraction argument, there exists a subsequence, still denoted $(\varphi_{n})_{n\ge  n_0}$, such that $(\varphi_{n})_{n\ge n_0}$ converges  locally uniformly  to  a continuous function  $\varphi$ which is nonnegative, non trivial function and satisfies $\varphi(x_{k_0})=1$.

Since $K$ satisfies the condition \eqref{hyp3}, we can pass to the limit in the Equation \eqref{bcv-eq-lim-lpn3} using the Lebesgue  monotone convergence theorem   and we  get 
$$ \oplb{\varphi}{\O}+(a_{k_0}(x)+\lambda)\varphi(x) \le 0 \quad \text{ in }\quad \O.$$
Hence, we have 
 $$ \oplb{\varphi}{\O}(x)+(a(x)+\lambda)\varphi(x) \le 0 \quad \text{ in }\quad \O,$$
 since $a\le a_{k_0}$.
\end{proof}


\section{Relation between $\lambda_p,\lambda_p', \lambda_p''$ and $\lambda_v$}
\label{bcv-section-pev}
In this section, we investigate the relations between the quantities $\lambda_p,\lambda_p',\lambda_p''$ and $\lambda_v$ and prove  Theorems \ref{bcv-pev-thm1} and \ref{bcv-pev-thm2}.

First, remark that, as  consequences of the definitions,  the monotone  and Lipschitz continuity properties satisfied by $\lambda_p$ ($(i)-(iii)$ of Proposition \eqref{bcv-prop-pev}) are still true for $\lambda_p'$ and $\lambda_v$.
We investigate now  the relation between $\lambda_p'$ and $\lambda_p$:  
\begin{lemma}\label{bcv-lem-lp'-le-lp}
Let $\O\subset \R^N$ be a domain and assume that $K$ and $a$ satisfy \eqref{hyp1}--\eqref{hyp3}. Then,  
 $$\lambda_p'(\lb{\O}+a)\le \lambda_p(\lb{\O}+a).$$
\end{lemma}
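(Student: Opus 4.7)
The strategy is to construct, for every $\lambda>\lambda_p(\lb{\O}+a)$, a compactly-supported nonnegative nontrivial test function $\psi\in C_c(\O)$ satisfying the subsolution inequality $\oplb{\psi}{\O}+(a+\lambda)\psi\ge 0$ in $\O$. Since $C_c(\O)\subset C(\O)\cap L^\infty(\O)$, the existence of such a $\psi$ yields $\lambda_p'(\lb{\O}+a)\le\lambda$ by definition, and sending $\lambda\searrow\lambda_p(\lb{\O}+a)$ then gives the lemma (and as a by-product also $\lambda_p''(\lb{\O}+a)\le\lambda_p(\lb{\O}+a)$).

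To produce $\psi$, I would first reduce to a bounded subdomain: by Lemma \ref{bcv-lem-lim} one can pick bounded $\O_n\subset\subset\O$ with $\lambda_p(\lb{\O_n}+a)<\lambda$. To invoke Theorem \ref{bcv-pev-th-crit1} and secure a positive continuous principal eigenfunction on $\O_n$, I would perturb $a$ upward to $a_k\ge a$ via the localised-bump construction already used in the proof of Lemma \ref{bcv-lem-lim}, ensuring $1/(\sup a_k-a_k)\notin L^1_{loc}$ while $\|a_k-a\|_\infty\le\epsilon$ is as small as required. Proposition \ref{bcv-prop-pev}(ii)--(iii) gives $\lambda_{n,k}:=\lambda_p(\lb{\O_n}+a_k)\le\lambda_p(\lb{\O_n}+a)<\lambda$, and Theorem \ref{bcv-pev-th-crit1} then supplies $\psi_n\in C(\bar\O_n)$, $\psi_n>0$, with
$$\oplb{\psi_n}{\O_n}+(a_k+\lambda_{n,k})\psi_n=0\quad\text{in }\O_n.$$

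The candidate is $\psi:=\psi_n\chi$, extended by $0$ outside $\O_n$, where $\chi\in C_c(\O_n)$ is a cutoff with $0\le\chi\le 1$ and $\chi\equiv 1$ on an inner subdomain $\O_n^\star$ chosen so that $\mathrm{dist}(\O_n^\star,\partial\{\chi>0\})>r_0$, with $r_0$ from \eqref{hyp3}. Checking the inequality $\oplb{\psi}{\O}+(a+\lambda)\psi\ge 0$ splits into three cases. When $B_{r_0}(x)\subset\{\chi\equiv 1\}$, the support of $K(x,\cdot)$ lies inside $\{\chi\equiv 1\}$ by \eqref{hyp3}; hence $\oplb{\psi}{\O}(x)=\oplb{\psi_n}{\O_n}(x)$ and $\psi(x)=\psi_n(x)$, so the eigenvalue equation reduces the inequality to
$$\bigl((\lambda-\lambda_{n,k})+(a-a_k)\bigr)\psi_n\ge (\lambda-\lambda_p(\lb{\O_n}+a)-\epsilon)\psi_n>0,$$
as soon as $\epsilon<\lambda-\lambda_p(\lb{\O_n}+a)$. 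Outside $\mathrm{supp}\,\chi$ the function $\psi$ vanishes while $\oplb{\psi}{\O}\ge 0$, so there is nothing to check.

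The main obstacle, and the only delicate step, is the intermediate boundary layer $\{0<\chi<1\}$, where one must still show that the nonlocal mass $\oplb{\psi}{\O}(x)=\int K(x,y)\psi_n(y)\chi(y)\,dy$ dominates the possibly negative term $(a+\lambda)\psi(x)$. This is precisely where the non-degeneracy assumption \eqref{hyp3} becomes essential: the lower bound $K(x,y)\ge c_0\mathds{1}_{B_{r_1}(x)}(y)$, combined with a Harnack-type uniform control on the positive eigenfunction $\psi_n$ (cf.\ \cite{Coville2012}), would let me design the transition of $\chi$ gently enough that the loss in $\oplb{\psi_n\chi}{\O}$ relative to $\oplb{\psi_n}{\O_n}$ is controlled uniformly by the positive margin created in the previous step, so that the inequality survives even in the layer. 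Once this is achieved, $\psi$ is the desired subsolution.
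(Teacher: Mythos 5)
Your architecture coincides with the paper's (reduction to a bounded subdomain via Lemma \ref{bcv-lem-lim}, an upward perturbation $a_k\ge a$ so that Theorem \ref{bcv-pev-th-crit1} supplies a positive continuous eigenfunction $\psi_n$ on $\bar\O_n$, multiplication by a cutoff, extension by zero), and your two easy cases are fine. But the step you yourself single out as ``the main obstacle, and the only delicate step'' is where the entire proof lives, and you do not carry it out: you only assert that the lower bound in \eqref{hyp3}, a Harnack-type control and a ``gentle enough'' transition of $\chi$ would close the inequality on the layer. That mechanism does not work. At a layer point $x$ you must show $\int_{\O}K(x,y)\psi_n(y)\chi(y)\,dy\ge -(a(x)+\lambda)\psi_n(x)\chi(x)$; the lower bound $K(x,\cdot)\ge c_0\mathds{1}_{\O\cap B_{r_1}(x)}$ only produces a left-hand side controlled by the \emph{fixed} quantity $c_0\,|B_{r_1}(x)\cap\O_n|\,\sup\psi_n$, while the right-hand side can be of order $\|a+\lambda\|_\infty\,\psi_n(x)\chi(x)$ with no smallness available; no modulus-of-continuity condition on $\chi$ changes this balance. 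The Harnack inequality is not the relevant tool for this lemma (it is used inside Lemma \ref{bcv-lem-lim}, which you already invoke).

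The missing ingredient is elementary and quantitative. Normalise $\psi_n\le 1$ and observe that $d_0:=\inf_{\bar\O_n}\psi_n>0$, since $\psi_n$ is positive and continuous on the compact set $\bar\O_n$. The eigen-equation holds at \emph{every} $x\in\O_n$, not only in the deep interior, so
$$\oplb{\psi_n\chi}{\O_n}(x)=\oplb{\psi_n}{\O_n}(x)-\oplb{\psi_n(1-\chi)}{\O_n}(x)\ \ge\ -(a_k(x)+\lambda_{n,k})\psi_n(x)-\|K\|_\infty\,\bigl|\{\chi<1\}\cap\O_n\bigr|,$$
uniformly in $x\in\O_n$, where the correction term is bounded using only $\psi_n\le 1$ and the upper bound in \eqref{hyp3}. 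Adding $(a+\lambda)\psi_n\chi$, the zeroth-order mismatch $-(a_k+\lambda_{n,k})\psi_n+(a+\lambda)\psi_n\chi$ is bounded below by a uniform strictly positive margin times $d_0$; its positivity rests on $\lambda>\lambda_{n,k}$ \emph{and} on the strict estimate $\lambda_{n,k}<-\sup_{\O_n}a_k$ furnished by Theorem \ref{bcv-pev-th-crit1}. It then suffices to choose the Urysohn cutoff so that the \emph{Lebesgue measure} of $\{\chi<1\}\cap\O_n$ is smaller than this margin times $d_0/\|K\|_\infty$; the width of the layer and the smoothness of $\chi$ play no role. This is exactly how the paper closes the argument. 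Until an estimate of this kind is supplied, your proof has a genuine gap at its central step.
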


\begin{proof}

Observe that  to get  inequality $\lambda_p'(\lb{\O}+a)\le \lambda_p(\lb{\O}+a),$
 it is sufficient to show that for any $\delta>0$: 
 $$ \lambda_p'(\lb{\O}+a)\le \lambda_p(\lb{\O}+a)+\delta.$$

For $\delta>0$, let us  consider the operator $\lb{\O}+b_\delta$ where $b_\delta:=a+\lambda_p(\lb{\O}+a)+\delta$. We claim that 
\begin{claim}\label{bcv-cla-lp'-le-lp}
For all $\delta>0$, there exists $\varphi_\delta\in C_c(\O)$ such that $\varphi_\delta\ge 0$ and $\varphi_\delta$ satisfies
$$\oplb{\varphi_\delta}{\O}(x)+b_\delta(x)\varphi_\delta(x)\ge 0 \quad \text{in}\quad \O.$$
\end{claim}
By proving the claim, we prove the Lemma. Indeed, assume for the moment that the claim holds. Then, by construction,
$(\varphi_\delta,\lambda_p(\lb{\O}+a)+\delta)$ satisfies 
$$\oplb{\varphi_\delta}{\O}(x)+[a(x)+\lambda_p(\lb{\O}+a)+\delta]\varphi_\delta(x)\ge 0 \quad \text{in}\quad \O.$$
Thus, by definition of $\lambda_p'(\lb{\O}+a)$, we have 
$\lambda_p'(\lb{\O}+a)\le \lambda_p(\lb{\O}+a)+\delta$.
The constant $\delta$ being arbitrary, we get for all $\delta>0$: $$\lambda_p'(\lb{\O}+a)\le \lambda_p(\lb{\O}+a)+\delta.$$

\end{proof}

\begin{proof}[Proof of the Claim]

Let $\delta>0$ be fixed. By construction $\lambda_p(\lb{\O}+b_\delta)<0$, so by Lemma \ref{bcv-lem-lim}, there exists a bounded open set $\o$ such that  $\lambda_p(\lb{\o}+b_\delta)<0$.
For any $\eps>0$ small enough, by taking $\o$ larger if necessary,  arguing as in the proof of Claim \ref{bcv-claim-testfct}, we can find $b_\eps$ such that 

\begin{align*}
&\|b_\delta-b_\eps\|_{\infty,\o}=\|b_\delta-b_\eps\|_{\infty,\O}\le \eps,\\
&\lambda_p(\lb{\o}+b_\eps(x))+\eps<0,
\end{align*}
 and there is $\varphi_p \in C(\bar \o)$, $\varphi_p>0$ associated to  $\lambda_p(\lb{\o}+b_\eps(x))$. That is $\varphi_p$ satisfies
\begin{equation}\label{bcv-eq-approx-delta}
\oplb{\varphi_p}{\o}(x) +b_\eps(x)\varphi_p(x) =-\lambda_p(\lb{\o}+b_\eps(x))\varphi_p(x) \quad\text{ in }\quad\o.
\end{equation} 
 Without loss of generality,  assume that   $\varphi_p\le 1$.

Let $\nu$ denotes the maximum of $b_\eps$ in $\bar\o$, then  by Proposition \ref{bcv-prop-pev}, there exists $\tau>0$ such that $$-\lambda_p(\lb{\o}+b_\eps(x))-\eps -\nu\ge \tau>0.$$
 Moreover, since $\varphi_p$ satisfies \eqref{bcv-eq-approx-delta}, there exists $d_0>0$ so that $\inf_{\o}\varphi_p\ge d_0$. \\
Let us choose $\o'\subset \subset \o$ such that $$|\o\setminus \o'|\le\frac{d_0\inf\{\tau,-\lambda_p(\lb{\o}+b_\eps)-\eps\}}{2\|K\|_\infty},$$
where for a set A,  $|A|$ denotes the Lebesgue measure of $A$. 

Since $\bar \o'\subset \subset \o$ and $\partial \o$ are two disjoint closed sets, by the Urysohn's Lemma there exists a continuous function $\eta$ such that $0\le \eta \le 1$, $\eta = 1$ in $\o'$, $\eta = 0$ in $\partial \o$.
Consider now $\varphi_p\eta$ and let us compute $\oplb{\varphi_p\eta}{\o}+b_\delta\varphi_p\eta.$ Then, we have 
\begin{align*}
\oplb{\varphi_p\eta}{\o}+b_\delta\varphi_p\eta &\ge-\lambda_p(\lb{\o}+b_\eps)\varphi_p -\|K\||\o\setminus \o'|-b_\eps\varphi_p(1-\eta)-(b_\eps-b_\delta)\varphi_p\eta,\\
&\ge-(\lambda_p(\lb{\o}+b_\eps)+\|b_\delta-b_\eps\|_{\infty,\o})\varphi_p -\frac{d_0\inf\{\tau,-\lambda_p(\lb{\o}+b_\eps)-\eps\}}{2}-b_\eps(x)\varphi_p(1-\eta),\\
&\ge-(\lambda_p(\lb{\o}+b_\eps)+\eps)\varphi_p -\frac{d_0\inf\{\tau,-\lambda_p(\lb{\o}+b_\eps)-\eps\}}{2}-\max\{\nu,0\}\varphi_p,\\
&\ge-(\lambda_p(\lb{\o}+b_\eps)+\eps+\max\{\nu,0\})\varphi_p -\frac{d_0\inf\{\tau,-\lambda_p(\lb{\o}+b_\eps)-\eps\}}{2}.
\end{align*} 
Since $-\lambda_p(\lb{\o}+b_\eps)-\eps>0$ and $-\lambda_p(\lb{\o}+b_\eps)-\eps-\nu\ge \tau>0$,   from the above inequality, we infer that
\begin{align*}
\oplb{\varphi_p\eta}{\o}+b_\delta\varphi_p\eta &\ge-(\lambda_p(\lb{\o}+b_\eps)+\eps+\max\{\nu,0\})d_0 -\frac{d_0\inf\{\tau,-\lambda_p(\lb{\o}+b_\eps)-\eps\}}{2},\\
&\ge \frac{d_0\inf\{\tau,-\lambda_p(\lb{\o}+b_\eps)-\eps\}}{2}\ge 0.
\end{align*}
By construction, we have $\varphi_p\eta \in C(\o)$ satisfying  
\begin{align*}
&\oplb{\varphi_p\eta}{\o} +b_\delta\varphi_p\eta \ge 0 \quad\text{ in } \quad  \o, \\
&\varphi_p\eta= 0 \quad\text{ on } \quad \partial \o.
\end{align*}
By extending $\varphi_p\eta$ by $0$ outside $\o$ and denoting $\varphi_\delta$ this extension, we get

\begin{align*}
&\oplb{\varphi_\delta}{\O}(x) +b_\delta(x)\varphi_\delta(x)= \oplb{\varphi_\delta}{\o}(x) +b_\delta(x)\varphi(x) \ge 0 \quad\text{ in } \quad  \o,\\
&\oplb{\varphi_\delta}{\O}(x) +b_\delta(x)\varphi_\delta(x)= \oplb{\varphi_\delta}{\o}(x) \ge 0 \quad\text{ in } \quad  \O\setminus \o.
 \end{align*}
Hence, $\varphi_\delta \ge 0, \varphi \in C_c(\O)$ is the desired test function.
\end{proof}

\begin{remark}\label{bcv-pev-rem-lp'-le-lp}
The assumption \eqref{hyp3} on  $K$ is only needed to reduce the problem on  unbounded domains to  problem  on  bounded domains. In addition, the above construction shows that the inequality is still valid if we replace $\lambda_p'$ by
$\lambda_p''(\lb{\O}+a)$. Thus we have for any domain $\O$,
$$\lambda_p''(\lb{\O}+a)\le \lambda_p(\lb{\O}+a).$$
 \end{remark}

\subsection{The bounded case: }
Assume for the moment that $\O$ is  a bounded domain and let us show that  the three definitions $\lambda_p,\lambda_p'$ and $\lambda_p''$ are equivalent and if in addition $K$ is symmetric,  $\lambda_v$ is equivalent to $\lambda_p$. We start by the case $\lambda_p'=\lambda_p$. Namely, we show 
\begin{lemma}\label{bcv-lem-lp'-eq-lp}
Let $\O$ be a bounded domain of $\R^N$ and assume that $a$ and $K$ satisfy \eqref{hyp1}--\eqref{hyp3}.  Then, 
$$\lambda_p(\lb{\O}+a)= \lambda_p'(\lb{\O}+a).$$
In addition, when $\lb{\O}+a$ is self adjoined, we have 
 $$\lambda_p(\lb{\O}+a)=\lambda_v(\lb{\O}+a).$$
\end{lemma}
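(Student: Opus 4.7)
My plan is as follows. The inequality $\lambda_p'(\lb{\O}+a) \leq \lambda_p(\lb{\O}+a)$ is already furnished by Lemma \ref{bcv-lem-lp'-le-lp}, so only the reverse direction remains; I would establish it via a sub-/super-solution comparison in the spirit of the strong maximum principle. The self-adjoint equality $\lambda_p = \lambda_v$ would then be reduced, via the plateau approximation of $a$ used in the proof of Claim \ref{bcv-claim-testfct}, to the Perron--Frobenius principle for positivity-preserving self-adjoint operators on $L^2(\O)$.

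For the comparison step, I would fix $\lambda < \lambda_p$ and, by definition of the supremum, pick $\lambda'' \in (\lambda,\lambda_p)$ together with $\varphi \in C(\bar\O)$, $\varphi > 0$, satisfying $\lb{\O}\varphi + (a+\lambda'')\varphi \leq 0$. This gives the strict bound
\[
\lb{\O}\varphi + (a+\lambda)\varphi \leq -c \quad\text{in } \O
\]
with $c := (\lambda''-\lambda)\min_{\bar\O}\varphi > 0$, compactness of $\bar\O$ guaranteeing positivity of the minimum. Suppose, for contradiction, that some $\psi \in C(\O) \cap L^\infty(\O)$, $\psi \geq 0$, $\psi \not\equiv 0$, satisfies $\lb{\O}\psi + (a+\lambda)\psi \geq 0$. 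Since $\varphi$ is bounded below by a positive constant and $\psi$ is bounded but not identically zero, $t^* := \sup_\O \psi/\varphi$ lies in $(0,+\infty)$. I then pick $x_n \in \O$ with $w(x_n) := (t^*\varphi-\psi)(x_n) \to 0$ and subtract the two inequalities at $x_n$ to obtain
\[
0 \leq \lb{\O}w(x_n) \leq -(a+\lambda)(x_n)\,w(x_n) - t^* c,
\]
where the leftmost inequality uses $K \geq 0$ and $w \geq 0$. Boundedness of $a$ and $w(x_n) \to 0$ make the middle term vanish in the limit, forcing $0 \leq -t^* c < 0$, the desired contradiction. Hence $\lambda \leq \lambda_p'$, and taking the supremum over $\lambda < \lambda_p$ closes $\lambda_p = \lambda_p'$.

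For the self-adjoint half, the Rayleigh direction $\lambda_v \leq \lambda_p$ follows at once by multiplying $\lb{\O}\varphi + (a+\lambda)\varphi \leq 0$ by $\varphi \in C(\bar\O) \subset L^2(\O)$ and integrating. For the reverse, I would approximate $a$ by the plateau function $a_\eps$ built in Claim \ref{bcv-claim-testfct}: $\|a-a_\eps\|_\infty \to 0$ and $1/(\sup a_\eps - a_\eps) \notin L^1_{loc}$, so Theorem \ref{bcv-pev-th-crit1} delivers a positive $\varphi_\eps \in C(\bar\O)$ with $(\lb{\O}+a_\eps+\lambda_\eps)\varphi_\eps = 0$ and $\lambda_\eps = \lambda_p(\lb{\O}+a_\eps) < -\sup_\O a_\eps$. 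Since $K$ is symmetric and satisfies \eqref{hyp3}, $\lb{\O}$ is a Hilbert--Schmidt self-adjoint operator on $L^2(\O)$, and $\lb{\O}+a_\eps$ is a bounded self-adjoint compact perturbation of multiplication by $a_\eps$; by Weyl's theorem its essential spectrum is contained in $[\inf a_\eps,\sup a_\eps]$, hence strictly below $-\lambda_\eps$. A Perron--Frobenius-type argument then rules out any point eigenvalue $\mu > -\lambda_\eps$: for such an eigenvector $\psi$, the pointwise bound $|\lb{\O}\psi| \leq \lb{\O}|\psi|$ (from $K\geq 0$), after a shift by a large constant, yields $(\lb{\O}+a_\eps-\mu)|\psi| \geq 0$; pairing against $\varphi_\eps>0$ via self-adjointness gives $-(\mu+\lambda_\eps)\int|\psi|\varphi_\eps \geq 0$, which is incompatible with $\mu+\lambda_\eps>0$ and $\int|\psi|\varphi_\eps>0$. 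Consequently $\sup\sigma(\lb{\O}+a_\eps) = -\lambda_\eps$, i.e.\ $\lambda_v(\lb{\O}+a_\eps) = \lambda_\eps = \lambda_p(\lb{\O}+a_\eps)$, and letting $\eps\to 0$ together with the Lipschitz continuity of $\lambda_p$ (Proposition \ref{bcv-prop-pev}(iii)) and of $\lambda_v$ (immediate from its variational form) in $a$ concludes.

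The main obstacle I anticipate is the comparison step itself: finiteness of $t^*$ crucially uses $\inf_{\bar\O}\varphi>0$, which is guaranteed only by compactness of $\bar\O$; this is exactly the bounded-domain feature that fails in the unbounded setting, where test functions may decay at infinity, and it explains why the analogous inequality in Theorem \ref{bcv-pev-thm2} requires extra structure such as symmetry of $K$ and $p\in L^\infty$.
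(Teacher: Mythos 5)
Your proof of $\lambda_p=\lambda_p'$ is correct and is essentially the paper's argument: both rest on comparing the strict subsolution $\varphi$ (with $\inf_{\bar\O}\varphi>0$ by compactness of $\bar\O$) against a putative supersolution $\psi$ through the ratio $\psi/\varphi$; the paper evaluates at the point where the ratio attains its maximum, while you use a maximizing sequence, an immaterial difference.

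For the self-adjoint part your route genuinely differs from the paper's. The paper proves $\lambda_v\le\lambda_p'$ by testing the supersolution inequality against its own nonnegative test function, and then, assuming $\lambda_v<\lambda_p'$, extracts a nonnegative continuous eigenfunction for $-\lambda_v$ from a Weyl sequence (using compactness of $\lb{\O}:L^2(\O)\to C(\bar\O)$ and $-\lambda_v>\max_{\bar\O}a$) and contradicts part one. You instead regularize $a$ into the plateau function $a_\eps$, invoke Theorem \ref{bcv-pev-th-crit1} to obtain an exact positive eigenpair, identify $-\lambda_p(\lb{\O}+a_\eps)$ with $\sup\sigma(\lb{\O}+a_\eps)$ via Weyl's theorem plus a Perron--Frobenius pairing, and pass to the limit using Lipschitz continuity in $a$; this is a legitimate alternative, and the spectral ingredients check out ($\lb{\O}$ is Hilbert--Schmidt on a bounded domain, $\mu>-\lambda_\eps>\sup_{\O} a_\eps$ gives $(\mu-a_\eps)|\psi|\le\oplb{|\psi|}{\O}$ without any shift, and $\int_\O|\psi|\varphi_\eps>0$). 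The one flawed step is your claim that $\lambda_v\le\lambda_p$ ``follows at once'' by multiplying $\oplb{\varphi}{\O}+(a+\lambda)\varphi\le 0$ by the positive subsolution $\varphi$: that computation yields $-\langle\oplb{\varphi}{\O}+a\varphi,\varphi\rangle\ge\lambda\|\varphi\|^2_{L^2(\O)}$, i.e.\ a \emph{lower} bound on the Rayleigh quotient of $\varphi$, which says nothing about the infimum defining $\lambda_v$. The correct easy direction multiplies the supersolution inequality coming from $\lambda_p'$ by its test function $\psi\ge 0$, as the paper does; in your scheme it is in any case subsumed by the exact identity $\lambda_v(\lb{\O}+a_\eps)=-\sup\sigma(\lb{\O}+a_\eps)=\lambda_p(\lb{\O}+a_\eps)$ that the Weyl/Perron--Frobenius argument already delivers, so the error is redundant rather than fatal.
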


The proof of Theorem \ref{bcv-pev-thm1} is  a straightforward consequence of the above Lemma.  Indeed, by Remark \ref{bcv-pev-rem-lp'-le-lp} and the definition of $\lambda_p''$ we have 
$$\lambda_p'(\lb{\O}+a) \le \lambda_p''(\lb{\O}+a)\le \lambda_p(\lb{\O}+a).$$
Thus, from the above Lemma we get  
$$ \lambda_p(\lb{\O}+a)=\lambda_p'(\lb{\O}+a)\le   \lambda_p''(\lb{\O}+a)\le \lambda_p(\lb{\O}+a)=\lambda_p'(\lb{\O}+a)    .$$ 
\fdem

Let us now turn  to the proof of Lemma \ref{bcv-lem-lp'-eq-lp}
\begin{proof}[Proof of Lemma \ref{bcv-lem-lp'-eq-lp}]
By Lemma \ref{bcv-lem-lp'-le-lp}, we already have 
$$\lambda_p'(\lb{\O}+a)\le \lambda_p(\lb{\O}+a). $$
So, it remains to prove the converse inequality. Let us assume  by contradiction  that 
$$ \lambda_p'(\lb{\O}+a)< \lambda_p(\lb{\O}+a).$$
Pick now $\lambda \in (\lambda_p'(\lb{\O}+a), \lambda_p(\lb{\O}+a))$, then, by definition of $\lambda_p$ and $\lambda_p'$, there exists  $\varphi$ and $\psi$ non negative  continuous  functions such that 
\begin{align*}
\oplb{\varphi}{\O}(x)+(a(x)+\lambda)\varphi(x)\le 0 \quad \text{ in }\quad \O,\\
 \oplb{\psi}{\O}(x)+(a(x)+\lambda)\psi(x)\ge 0\quad \text{ in }\quad \O.
\end{align*}
Moreover, $\varphi>0$ in $\bar \O$. By taking $\lambda$ smaller if necessary, we can assume that $\varphi$ satisfies
$$\oplb{\varphi}{\O}(x)+(a(x)+\lambda)\varphi(x)< 0 \quad \text{ in }\quad \O. $$
A direct computation yields 
$$\int_{\O}K(x,y)\varphi(y)\left(\frac{\psi(y)}{\varphi(y)} -\frac{\psi(x)}{\varphi(x)}\right)\,dy > 0.$$
Since $\frac{\psi}{\varphi}\in C(\bar\O)$, the function $\frac{\psi}{\varphi}$  achieves a maximum at some point $x_0\in \bar \O$, evidencing thus  the contradiction:
 $$0<\int_{\O}K(x_0,y)\varphi(y)\left(\frac{\psi(y)}{\varphi(y)} -\frac{\psi(x_0)}{\varphi(x_0)}\right)\,dy \le 0.$$
Thus, $$\lambda_p'(\lb{\O}+a)=\lambda_p(\lb{\O}+a). $$

In the self-adjoined case,  it is enough to prove that $$\lambda_p'(\lb{\O}+a)=\lambda_v(\lb{\O}+a).$$
From the definitions of $\lambda_p'$ and $\lambda_v$, we easily obtain that $\lambda_v \le \lambda_p'$. Indeed,
let  $\lambda >\lambda_p'(\lb{\O}+a)$, then by definition of $\lambda_p'$ there exists $\psi\ge 0$ such that $\psi \in C(\O)\cap L^{\infty}(\O)$ and 
  \begin{equation}\label{bcv-eq-l2sup}
  \oplb{\psi}{\O}(x)+(a(x)+\lambda)\psi(x)\ge 0\quad \text{ in }\quad \O.
  \end{equation}
 Since $\O$ is bounded and $\psi\in L^{\infty}(\O)$,    $\psi \in L^2(\O)$. So,   multiplying \eqref{bcv-eq-l2sup} by $-\psi$ and integrating over $\O$ we get 
 \begin{align*}
 &-\int_{\O}\int_{\O}K(x,y)\psi(x)\psi(y)\,dxdy -\int_{\O}a(x)\psi(x)^2\,dx \le \lambda \int_{\O}\psi^2(x)\,dx,\\
 & \frac{1}{2}\int_{\O}\int_{\O}K(x,y)\left(\psi(x)-\psi(y)\right)^2\,dxdy -\int_{\O}(a(x)+k(x))\psi(x)^2\,dx \le \lambda \int_{\O}\psi^2(x)\,dx,\\
  &\lambda_v(\lb{\O}+a)\int_{\O}\psi^2(x)\,dx\le   \lambda \int_{\O}\psi^2(x)\,dx.
 \end{align*}
 Therefore, $\lambda_v(\lb{\O}+a)\le \lambda_p'(\lb{\O}+a)$.\\
 Let us prove now the converse inequality. Again,  we argue by contradiction and let us assume that 
 \begin{equation}\label{bcv-eq-lv-le-lp'}
 \lambda_v(\lb{\O}+a)<\lambda_p'(\lb{\O}+a).
 \end{equation}

Observe first that by density of $C(\bar \O)$ in $L^2(\O)$, we  easily check that 
\begin{align*}
-\lambda_v(\lb{\O}+a)&=-\inf_{\varphi \in L^{2}(\O),\varphi\not \equiv 0} \frac{\frac{1}{2}\int_{\O}\int_{\O}K(x,y)(\varphi(x)-\varphi(y))^2\,dydx -\int_{\O}(a(x)+k(x))\varphi(x)^2\,dx}{\|\varphi\|^2_{L^2(\O)}},\\
&=-\inf_{\varphi \in L^{2}(\O),\varphi\not \equiv 0} \frac{-\int_{\O}\int_{\O}K(x,y)\varphi(x)\varphi(y)\,dydx -\int_{\O}a(x)\varphi(x)^2\,dx}{\|\varphi\|^2_{L^2(\O)}},\\
&=\sup_{\varphi \in L^{2}(\O),\varphi\not \equiv 0} \frac{\langle\oplb{\varphi}{\O}+a\varphi,\varphi\rangle}{\|\varphi\|^2_{L^2(\O)}},\\
&=\sup_{\varphi \in C(\bar\O),\varphi\not \equiv 0} \frac{\langle\oplb{\varphi}{\O}+a\varphi,\varphi\rangle}{\|\varphi\|^2_{L^2(\O)}}.
\end{align*}

 By (iv) of Proposition \ref{bcv-prop-pev}, since $\lambda_p'(\lb{\O}+a)=\lambda_p(\lb{\O}+a)$,   from \eqref{bcv-eq-lv-le-lp'} we infer that $\lambda_+ $ defined by
\begin{equation}\label{bcv-eq-sigma+} 
\lambda_+ = \sup_{\varphi \in C(\overline \Omega) }
\frac{\langle\oplb{\varphi}{\O} + a \varphi, \varphi\rangle
}{\int_\Omega \varphi^2}
\end{equation}
satisfies
\begin{align}
\label{bcv-eq-bound-sigma+} \lambda_+  >-\lambda_p(\lb{\O}+a)\ge \max_{\overline
\Omega} a.
\end{align}

Now, using the same arguments as in \cite{Coville2008b,Hutson2003}, we infer that
the supremum in \eqref{bcv-eq-sigma+} is achieved. Indeed, it is a standard fact
\cite{Brezis2010} that the spectrum of $\lb{\O} + a$ is at the left of
$\lambda_+$ and that there exists a sequence $\varphi_n \in
C(\overline \Omega)$ such that $\|\varphi_n \|_{L^2(\Omega)}=1$
and $ \| (\lb{\O} + a - \lambda_+) \varphi_n \|_{L^2(\Omega)} \to 0$
as $n\to+\infty$. By compactness of $\lb{\O}:L^2(\Omega) \to
C(\overline\Omega)$, for a subsequence, $\lim_{n\to+\infty}
\lb{\O}[\varphi_n] $ exists in $C(\overline \Omega)$. Then, using
\eqref{bcv-eq-bound-sigma+}, we see that $\varphi_n \to \varphi$ in
$L^2(\Omega)$ for some $\varphi $ and $(\lb{\O}+a)\varphi = \lambda_+
\varphi$. This equation implies $ \varphi \in C(\overline \Omega)$,
and $\lambda_+$ is an eigenvalue for the operator
$\lb{\O}+a$. Moreover, $\varphi\ge 0$, since  $\varphi^+$ is also a minimizer. Indeed, we have
 \begin{align*}
 \lambda_+ &=\frac{\int_{\O}[\oplb{\varphi}{\O}(x)+a(x)\varphi(x)]\varphi^+(x)\,dx}{\|\varphi^+\|^2_{L^2(\O)}},\\
 &=\frac{\int_{\O}[\oplb{\varphi^+}{\O}(x)+a(x)\varphi^+(x)]\varphi^+(x)\,dx}{\|\varphi^+\|^2_{L^2(\O)}} +\frac{\int_{\O}\int_{\O}K(x,y)\varphi^-(x)\varphi^+(y)\,dydx}{\|\varphi^+\|^2_{L^2(\O)}},\\
 &\le\frac{\int_{\O}[\oplb{\varphi^+}{\O}+a\varphi^+(x)]\varphi^+(x)\,dx}{\|\varphi^+\|^2_{L^2(\O)}}\le \lambda_+.
 \end{align*}

Thus, there exists a non-negative continuous $\varphi$ so that 
$$\oplb{\varphi}{\O}(x)+(a(x)+\lambda_v)\varphi(x)=0\quad \text{in}\quad \O.$$
Since $\lambda_v<\lambda_p$, we can  argue as above and get the desired contradiction.
Hence, $\lambda_v=\lambda_+=\lambda_p=\lambda_p'$. 

\end{proof}

\subsection{The unbounded case:}

Now let $\O$ be an unbounded domain. From Lemma \ref{bcv-lem-lp'-le-lp} and Remark \ref{bcv-pev-rem-lp'-le-lp}, we already know that 
$$\lambda_p'(\lb{\O}+a)\le \lambda_p''(\lb{\O} +a)\le \lambda_p(\lb{\O} +a).$$
To complete the proof of Theorem \ref{bcv-pev-thm2}, we are then left to prove that
$$\lambda_p'(\lb{\O}+a)= \lambda_p''(\lb{\O} +a)= \lambda_p(\lb{\O} +a)=\lambda_v(\lb{\O}+a),$$
 when $\lb{\O}+a$ is self-adjoined and the kernel $K$ is such that $p(x):= \int_{\O}K(x,y)\,dy$ is a bounded function in $\O$. To do so, we prove the following inequality :

\begin{lemma}\label{bcv-lem-lp-le-lpprime} 
Let $\O$ be an unbounded domain and assume  that $a$ and $K$ satisfies \eqref{hyp1}--\eqref{hyp3}. Assume further that $K$ is symmetric and  $p(x):=\int_{\O}K(x,y) \,dy \in L^{\infty}(\O)$. Then, we have  $$\lambda_p(\lb{\O}+a)\leq \liminf_{n\to +\infty}\lambda_v(\lb{\O_n}+a)\leq\lambda_p'(\lb{\O} +a),$$
where $\O_n:=(\O\cap B_n)_{n\in\N}$ and $B_n$  is the ball of radius $n$ centred at $0$.
\end{lemma}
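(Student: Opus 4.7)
The plan is to prove the two inequalities separately, using the already established equivalence on bounded domains (Lemma \ref{bcv-lem-lp'-eq-lp}), the continuity property (Lemma \ref{bcv-lem-lim}), and a Cauchy--Schwarz estimate that exploits the hypothesis $p\in L^\infty(\O)$.

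For the lower bound $\lambda_p(\lb{\O}+a)\le \liminf_n \lambda_v(\lb{\O_n}+a)$, I would first observe that each $\O_n=\O\cap B_n$ is bounded, so Lemma \ref{bcv-lem-lp'-eq-lp} (which uses the symmetry of $K$) gives $\lambda_v(\lb{\O_n}+a)=\lambda_p(\lb{\O_n}+a)$ for every $n$. Since $(\O_n)$ is an increasing sequence exhausting $\O$, Lemma \ref{bcv-lem-lim} yields $\lambda_p(\lb{\O_n}+a)\to \lambda_p(\lb{\O}+a)$. Combining both, $\lim_n \lambda_v(\lb{\O_n}+a)=\lambda_p(\lb{\O}+a)$, which is the first inequality (actually an equality).

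For the upper bound, I would fix $\lambda>\lambda_p'(\lb{\O}+a)$ and pick a supersolution $\psi\in C(\O)\cap L^\infty(\O)$, $\psi\ge 0$, $\psi\not\equiv 0$, with $\oplb{\psi}{\O}+(a+\lambda)\psi\ge 0$ in $\O$. Testing the Rayleigh quotient defining $\lambda_v(\lb{\O_n}+a)$ with $\psi|_{\O_n}\in L^2(\O_n)$ (which is legal because $\O_n$ is bounded and $\psi\in L^\infty$), and using the decomposition $\oplb{\psi}{\O_n}(x)=\oplb{\psi}{\O}(x)-\int_{\O\setminus\O_n}K(x,y)\psi(y)\,dy$ for $x\in\O_n$ together with the supersolution inequality, I would derive
\[
\lambda_v(\lb{\O_n}+a)\le \lambda+E_n,\qquad E_n:=\frac{\int_{\O_n}\int_{\O\setminus\O_n}K(x,y)\psi(x)\psi(y)\,dy\,dx}{\int_{\O_n}\psi^2(x)\,dx}.
\]
What remains, and is the main obstacle, is to show $\liminf_n E_n=0$.

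Here the symmetry of $K$ and the bound $p\in L^\infty(\O)$ are essential. Applying Cauchy--Schwarz in $L^2(K(x,y)\,dx\,dy)$, together with $\int_{\O\setminus\O_n}K(x,y)\,dy\le \|p\|_\infty$ and, by symmetry, $\int_{\O_n}K(x,y)\,dx\le \|p\|_\infty$, and invoking the nondegeneracy condition \eqref{hyp3} (which forces both integrations to be supported in the annular region $\O_{n+r_0}\setminus\O_{n-r_0}$), yields
\[
\int_{\O_n}\int_{\O\setminus\O_n}K(x,y)\psi(x)\psi(y)\,dy\,dx\le \|p\|_\infty\bigl(F(n+r_0)-F(n-r_0)\bigr),
\]
where $F(n):=\int_{\O_n}\psi^2$. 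Hence $E_n\le \|p\|_\infty(F(n+r_0)-F(n-r_0))/F(n)$. I would finish by a dichotomy on $F$: either $\psi\in L^2(\O)$, and then $F(n)\to \|\psi\|_{L^2(\O)}^2>0$ while the numerator tends to $0$; or $F(n)\to+\infty$, and then the bound $F(n)\le \|\psi\|_\infty^2|\O_n|=O(n^N)$ (which comes from $\psi\in L^\infty$) prevents the inequality $F(n+r_0)\ge(1+\varepsilon)F(n-r_0)$ from holding for all large $n$, since iterating it would force exponential growth of $F$, contradicting this polynomial upper bound. In both cases $\liminf_n E_n=0$, so $\liminf_n\lambda_v(\lb{\O_n}+a)\le\lambda$, and letting $\lambda\downarrow \lambda_p'(\lb{\O}+a)$ closes the argument.
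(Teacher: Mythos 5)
Your proposal is correct and follows essentially the same route as the paper: the first inequality via the bounded-domain identity $\lambda_v=\lambda_p$ plus exhaustion, and the second via restricting a supersolution to $\O_n$, isolating the boundary error term $E_n$, controlling it by Cauchy--Schwarz using the compact support of $K$ (from \eqref{hyp3}), its symmetry and $p\in L^\infty$, and killing it by the polynomial-versus-exponential growth argument on $F(n)=\int_{\O_n}\psi^2$ (which is exactly the content of the paper's Lemma \ref{bcv-pev-lem-tech} applied to $g=\psi^2$). The only cosmetic differences are that you invoke Lemma \ref{bcv-lem-lim} where the paper only needs domain monotonicity, and that you inline the growth dichotomy rather than quoting it as a separate lemma.
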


Assume  that Lemma \ref{bcv-lem-lp-le-lpprime} holds  and let us end the proof of Theorem \ref{bcv-pev-thm2}.
\begin{proof}[ Proof of Theorem \ref{bcv-pev-thm2} :]~\\
From Lemma \ref{bcv-lem-lp'-le-lp} and \ref{bcv-lem-lp-le-lpprime}, we get the inequalities:  

\begin{align*}
&\lim_{n\to \infty}\lambda_v(\lb{\O_n}+a)\le \lambda_p'(\lb{\O}+a)\le \lambda_p''(\lb{\O}+a)\le \lambda_p(\lb{\O}+a),\\
&\lambda_p(\lb{\O}+a) \le \lim_{n\to \infty}\lambda_v(\lb{\O_n}+a)\le \lambda_p'(\lb{\O}+a)\le \lambda_p''(\lb{\O}+a),
\end{align*}
with $\O_n:=\O\cap B_n(0)$.
Therefore, $$ \lim_{n\to \infty}\lambda_v(\lb{\O_n}+a)= \lambda_p'(\lb{\O}+a)=\lambda_{p}''(\lb{\O}+a)= \lambda_p'(\lb{\O}+a)= \lambda_p(\lb{\O}+a).$$
It remains to prove that $\lambda_v(\lb{\O}+a)=\lambda_p(\lb{\O}+a)$.

By definition of $\lambda_p''(\lb{\O} +a)$,  we  check that 
$$\lambda_v(\lb{\O}+a)\le \lambda_p''(\lb{\O}+a)=\lambda_p(\lb{\O}+a).$$
On the other hand, by definition of $\lambda_v(\lb{\O} +a)$, for any $\delta>0$ there exists $\varphi_\delta \in L^{2}(\O)$ such that 

$$ \frac{\frac{1}{2}\iint_{\O\times\O}K(x,y)(\varphi_\delta(x)-\varphi_\delta(y))^2\,dydx-\int_{\O}(a(x)+p(x))\varphi_\delta^2(x)\,dx}{\nlp{\varphi_\delta}{2}{\O}^2} \le \lambda_v(\lb{\O}+a) +\delta. $$

Define
$$\I_R(\varphi_\delta):=\frac{\frac{1}{2}\iint_{\O_R\times \O_R}K(x,y)(\varphi_\delta(x)-\varphi_\delta(y))^2\,dydx - \int_{\O_R}(a(x)+p_R(x))\varphi_\delta^2(x)\,dx}{\nlp{\varphi_\delta}{2}{\O_R}^2}, $$
with $p_R(x):=\int_{\O_R}K(x,y)\,dy$.
Since $\lim_{R\to \infty}p_R(x)=p(x)$ for all $x\in \O$, $a\in L^{\infty}$ and $\varphi_\delta \in L^2(\O)$, by Lebesgue's monotone convergence Theorem  we get for $R$ large enough

$$-\int_{\O_R}(a(x)+p_R(x))\varphi_\delta^2(x)\,dx\le \delta \nlp{\varphi_\delta}{2}{\O_R}^2 -\int_{\O}(a(x)+p(x))\varphi_\delta^2(x)\,dx. $$ 

Thus, we have for $R$ large enough

\begin{align*}
\I_R(\varphi_\delta)&\le \frac{\frac{1}{2}\iint_{\O\times\O}K(x,y)(\varphi_\delta(x)-\varphi_\delta(y))^2\,dydx-\int_{\O}(a(x)+p(x))\varphi_\delta^2(x)\,dx}{\nlp{\varphi_\delta}{2}{\O}^2},\\
&\le \frac{\nlp{\varphi_\delta}{2}{\O}^2}{\nlp{\varphi_\delta}{2}{\O_R}^2}( \lambda_v(\lb{\O}+a) +\delta) +\delta,\\
&\le   \lambda_v(\lb{\O}+a) + C\delta,
\end{align*}
for some universal constant $C>0$.

By definition of $\lambda_v(\lb{\O_R}+a)$, we then get
$$ \lambda_v(\lb{\O_R}+a)\le \I_R(\varphi_\delta) \le \lambda_v(\lb{\O}+a) + C\delta  \quad \text{ for $R$ large enough}.$$
Therefore,
\begin{equation}\label{bcv-eq-lvR-le-lv}
\lim_{R\to \infty}  \lambda_v(\lb{\O_R}+a)\le \lambda_v(\lb{\O}+a) + C\delta.
\end{equation}
Since \eqref{bcv-eq-lvR-le-lv} holds true for any $\delta$, we get 
$$\lim_{R\to \infty}  \lambda_v(\lb{\O_R}+a)\le \lambda_v(\lb{\O}+a).$$
As a consequence, we obtain 

$$\lambda_p(\lb{\O}+a)= \lim_{n\to \infty}\lambda_v(\lb{\O_n}+a)\le \lambda_v(\lb{\O}+a)\le \lambda_p''(\lb{\O} +a)=\lambda_p(\lb{\O}+a),$$
which enforces $$\lambda_v(\lb{\O}+a)=\lambda_p(\lb{\O} +a).$$

\end{proof}

We can now turn  to the proof of Lemma \ref{bcv-lem-lp-le-lpprime}. But before proving this Lemma, we start by showing some technical Lemma in the spirit of Lemma 2.6 in \cite{Berestycki2011}. Namely, we prove  

\begin{lemma}\label{bcv-pev-lem-tech}
Assume $\O$ is unbounded and let $g \in L^{\infty}(\O)$ be a non negative function, then for any $R_0>0$, we have 
$$\lim_{R\to \infty}\frac{\int_{\O\cap (B_{R_0+R}\setminus B_R)}g}{\int_{\O\cap B_R}g}=0.$$
\end{lemma}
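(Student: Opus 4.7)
The plan is to analyze the non-decreasing function $F(R):=\int_{\Omega\cap B_R} g$. Two elementary bounds drive the argument: $F$ is non-decreasing in $R$ since $g\ge 0$, and $g\in L^\infty(\Omega)$ yields the polynomial growth
$$F(R)\le \|g\|_{L^\infty}\,|B_R|=c_N\|g\|_{L^\infty}R^N.$$
I assume $g\not\equiv 0$, so that $F(R)>0$ for $R$ sufficiently large and the ratio is well-defined. The statement then amounts to showing $F(R+R_0)/F(R)\to 1$.

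I would split into two regimes according to the behaviour of $F$ at infinity. First, if $g\in L^1(\Omega)$, monotone convergence gives $F(R)\nearrow \int_\Omega g\in (0,\infty)$; hence the numerator $F(R+R_0)-F(R)\to 0$ while the denominator stays bounded away from $0$, and the ratio tends to $0$. Second, if $F(R)\to\infty$, I would argue by contradiction: suppose $\liminf_{R\to\infty}(F(R+R_0)-F(R))/F(R)\ge\epsilon>0$, so that $F(R+R_0)\ge(1+\epsilon)F(R)$ for all $R\ge R^*$. Iterating in steps of $R_0$ yields $F(R^*+nR_0)\ge(1+\epsilon)^n F(R^*)$, which is exponential in $n$ and thus incompatible with the polynomial upper bound $c_N\|g\|_{L^\infty}(R^*+nR_0)^N$ once $n$ is large enough.

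The main obstacle I anticipate is that this iteration only delivers $\liminf=0$, not $\lim=0$: with only $g\ge 0$ in $L^\infty$ the $\limsup$ of the ratio can be strictly positive (for $N\ge 2$, taking $g$ as the indicator of $\bigcup_{n\ge 1}\{2^n\le|x|\le 2^n+1\}$ gives a ratio oscillating around a positive constant along $R=2^k$). To upgrade $\liminf=0$ to the full limit therefore requires genuine $L^1$-tail decay of $g$, i.e.\ the first case of the split. In the context in which this lemma is applied (with $g$ essentially a squared $L^2$-minimizer $\varphi_\delta^2$), the $L^1$ integrability is automatic, so in practice one really only needs the first, easy case; I would present that as the main route and record the polynomial-vs-exponential iteration as an auxiliary observation covering the situation $F(R)\to\infty$.
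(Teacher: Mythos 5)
Your core argument in the regime $F(R):=\int_{\O\cap B_R}g\to\infty$ --- assume a uniform lower bound $\eps$ on the ratio for all large $R$, iterate in steps of $R_0$ to get $F(R^*+nR_0)\ge(1+\eps)^nF(R^*)$, and contradict the polynomial bound $F(R)\le c_N\|g\|_\infty R^N$ --- is exactly the paper's proof, there phrased in terms of $a_n:=\int_{C_{R_0,R_n}}g$ and the recursion $a_n\ge\eps\sum_{k<n}a_k$. The preliminary case $g\in L^1(\O)$ is an addition the paper does not make. More importantly, your diagnosis of the gap is correct: the contradiction hypothesis ``ratio $\ge\eps$ for all $R\ge R_\eps$'' is the negation of $\liminf=0$, not of $\lim=0$, so the iteration only yields the $\liminf$ statement; and your example with $g$ the indicator of dyadically spaced annuli shows the full limit genuinely fails for $N\ge2$ (along $R=2^k$ the ratio tends to $2^{N-1}-1>0$). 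The paper's own proof has precisely this defect --- it opens with ``assume by contradiction that $\lim\ldots>0$'' and then silently upgrades this to a uniform lower bound for all large $R$ --- so the lemma as stated should really be read (and can only be proved) as a $\liminf$ statement, or under an extra hypothesis such as $g\in L^1(\O)$.

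One correction to your closing remark about the application. The lemma is invoked in the proof of Lemma \ref{bcv-lem-lp-le-lpprime} with $g=\varphi^2$, where $\varphi$ is a test function for $\lambda_p'$ and hence only lies in $C(\O)\cap L^{\infty}(\O)$, not necessarily in $L^2(\O)$; so your ``easy case'' is not automatic there, and the unbounded-$F$ regime is genuinely the one that matters. Fortunately the application only requires $\liminf_{n\to\infty}I_n/\nlp{\varphi}{2}{\O_n}^2=0$ (this is exactly how \eqref{bcv-eq-liminf-le-lpprime_In} is formulated, and the final estimate \eqref{bcv-eq-liminf-le-lpprime9} is a $\liminf$ bound), so the weaker $\liminf$ version that your iteration --- and the paper's --- actually establishes is sufficient for everything downstream.
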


\begin{proof}
Without loss of generality, by extending $g$ by $0$ outside $\O$ we can assume that $\O=\R^N$. For any $R_0,R>0$ fixed, let us denote the annulus 
$C_{R_0,R}:=B_{R_0+R}\setminus B_{R}$. Assume by contradiction that 
$$\lim_{R\to \infty}\frac{\int_{C_{R_0,R}}g}{\int_{B_R}g}>0.$$
Then there exists $\eps>0$ and $R_\eps >1$ so that 
$$\forall\, R\ge R_\eps, \quad \frac{\int_{C_{R_0,R}}g}{\int_{B_R}g}\ge \eps.$$
Consider the sequence $(R_n)_{n\in \N}$ defined by $R_n:=R_\eps +nR_0$ and set $a_n:=\int_{C_{R_0,R_n}}g$.
For all $n$, we have $ C_{R_0,R_n}=B_{R_{n+1}}\setminus B_{R_n}$ and   
$$ B_{R_{n+1}}=B_{R_\eps} \cup \left(\bigcup_{k=0}^{n}C_{R_0,R_k}\right).$$ 
From the last inequality, for $n\ge 1$ we deduce that 
$$a_n\ge \eps\int_{B_{R_n}}g \ge \eps \sum_{k=0}^{n-1}a_k.$$
Arguing now as in \cite{Berestycki2011},by a recursive argument, the last inequality yields
\begin{equation}\label{bcv-pev-eq-lem-luca} 
\forall\, n\ge 1, \quad a_n\ge \eps a_0(1+\eps)^{n-1}.
\end{equation}
On the other hand, we have
$$a_n=\int_{C_{R_0,R_n}}g \le \|g\|_{\infty}|C_{R_0,R_n}| \le d_0 n^{N},  $$
with $d_0$ a positive constant, contradicting thus   \eqref{bcv-pev-eq-lem-luca}.

\end{proof}

We are now in a position  to  prove  Lemma \ref{bcv-lem-lp-le-lpprime}.

\begin{proof}[Proof of Lemma \ref{bcv-lem-lp-le-lpprime} :]

The proof follows some ideas developed in \cite{Berestycki2008,Berestycki2011,Coville2008b,Coville}. To simplify the presentation, let us call $\lambda_p=\lambda_p(\lb{\O}+a)$ and $\lambda_p'=\lambda_p'(\lb{\O}+a)$.

First recall that  for a bounded domain $\O$, we have 
$$ \lambda_p=\lambda_p'=\lambda_v.$$
Let $(B_n)_{n\in\N}$ be the increasing sequence of balls of radius $n$ centred at $0$ and let $\O_n:=\O\cap B_n$.
By monotonicity of $\lambda_p$ with respect to the domain,  we  have 
$$\lambda_p(\lb{\O}+a)\le \lambda_p(\lb{\O_n}+a)=\lambda_v(\lb{\O_n}+a)$$
Therefore 
$$\lambda_p(\lb{\O}+a)\le \liminf_{n\to\infty}\lambda_v(\lb{\O_n}+a).$$
Thanks to the last inequality,  we obtain the inequality  $\lambda_p(\lb{\O}+a)\le \lambda_p'(\lb{\O}+a)$ by proving that 
\begin{equation}\label{bcv-eq-liminf-le-lpprime}
\liminf_{n\to\infty}\lambda_v(\lb{\O_n}+a)\le \lambda_p'(\lb{\O}+a).
\end{equation}
To prove \eqref{bcv-eq-liminf-le-lpprime}, it is enough to  show that for any $\delta>0$  
\begin{equation}\label{bcv-eq-liminf-le-lpprime2}
\liminf_{n\to\infty}\lambda_v(\lb{\O_n}+a)\le \lambda_p'(\lb{\O}+a)+\delta.
\end{equation}
Let us fix $\delta>0$ and let us denote $\mu:=\lambda_p'(\lb{\O}+a)+\delta$.
By definition of $\lambda_p'(\lb{\O}+a)$ there exists a function $\varphi\in C(\O)\cap L^\infty(\O)$, $\varphi\ge 0$ satisfying
\begin{equation}\label{bcv-eq-liminf-le-lpprime3}
 \oplb{\varphi}{\O}(x)+a(x)\varphi(x)+\mu\varphi(x)\geq 0\quad    \text{in} \quad \O.
 \end{equation}
Without loss of generality, we can also assume that  $\|\varphi\|_{L^\infty(\O)}=1$.

Let $\mathds{1}_{\O_n}$ be the characteristic function of $\O_n=\O\cap B_n$ and let $w_n=\varphi\mathds{1}_{\O_n}$. 
By definition of $\lambda_v(\lb{\O_n}+a)$ and  since $w_n\in L^2(\O_n)$, we have 
\begin{equation}\label{bcv-eq-liminf-le-lpprime4}
\lambda_v(\lb{\O_n}+a)\nlp{w_n}{2}{\O_n}^2\le \int_{\O_n}\left(-\oplb{w_n}{\O_n}(x)-a(x)w_n(x)\right)w_n(x)\,dx.
\end{equation}
Since $\oplb{\varphi}{\O}w_n \in L^1(\O_n)$, from \eqref{bcv-eq-liminf-le-lpprime4} and by using  \eqref{bcv-eq-liminf-le-lpprime3} we get  
\begin{align*}
\lambda_v(\lb{\O_n}+a)\nlp{w_n}{2}{\O_n}^2&\le \int_{\O_n}\left(-\oplb{w_n}{\O_n}(x)-a(x)w_n(x)-\mu w_n+\mu w_n\right)w_n(x)\,dx,\\
&\le \mu\nlp{w_n}{2}{\O_n}^2+\int_{\O_n}\left(-\oplb{w_n}{\O_n}(x)+\oplb{\varphi}{\O}(x)\right)w_n(x)\,dx,\\
&\le \mu\nlp{w_n}{2}{\O_n}^2+\int_{\O_n}\left(\int_{\O\setminus \O_n}K(x,y)\varphi(y)\,dy\right)w_n(x)\,dx,\\
&\le \mu\nlp{w_n}{2}{\O_n}^2+I_n,
\end{align*}
where  $I_n$ denotes 
\begin{equation*} 
I_n:=\int_{\O_n}\left(\int_{\O\setminus \O_n}K(x,y)\varphi(y)\,dy\right)\varphi(x)\,dx.
\end{equation*}

Observe that we achieve  \eqref{bcv-eq-liminf-le-lpprime2} by proving 
\begin{equation}\label{bcv-eq-liminf-le-lpprime_In}
\liminf_{n\to\infty}\frac{I_n}{\nlp{\varphi}{2}{\O_n}^2}=0.
\end{equation}
Recall that $K$ satisfies \eqref{hyp3}, therefore there exists $C>0$ and $R_0>0$ such that \\$K(x,y)\leq C\mathds{1}_{R_0}(|x-y|))$. So, we get 
\begin{equation}\label{bcv-eq-liminf-le-lpprime5}
I_n\le \int_{\O_n}\left(\int_{\O\cap (B_{R_0+n}\setminus B_n)}K(x,y)\varphi(y)\,dy\right)w_n(x)\,dx.
\end{equation} 

By Fubini's Theorem, Jensen's inequality and  Cauchy-Schwarz's inequality, it follows that    
\begin{align*}
I_n&\leq \left(\int_{\O\cap(B_{R_0+n}\setminus B_n)}\varphi^2(y)\,dy\right)^{1/2}\left(\int_{\O\cap (B_{R_0+n}\setminus B_n)}\left(\int_{\O_n}K(x,y)\varphi(x)\,dx\right)^{2}\,dy\right)^{1/2},\\
&\leq \nlp{\varphi}{2}{\O\cap (B_{R_0+n}\setminus B_n)}\left(\int_{\O\cap(B_{R_0+n}\setminus B_n)}\left(\int_{ \O\cap B_n}K^2(x,y)\varphi^2(x)\,dx\right)\,dy\right)^{1/2},\\
&\leq \nlp{\varphi}{2}{\O\cap(B_{R_0+n}\setminus B_n)}\left(\int_{\O\cap B_n}\left(\int_{\O\cap B_{R_0+n}\setminus B_n)}K^2(x,y)\,dy\right)\varphi^2(x)\,dx\right)^{1/2}.
\end{align*}
Since $K$ and $p$ are bounded functions, we obtain 
\begin{equation}
I_n\leq \|K\|_{\infty}\|p\|_{\infty}\nlp{\varphi}{2}{\O \cap(B_{R_0+n}\setminus B_n}\nlp{\varphi}{2}{\O\cap B_n}.\label{bcv-eq-liminf-le-lpprime6}
\end{equation}
 Dividing \eqref{bcv-eq-liminf-le-lpprime6} by $\nlp{\varphi}{2}{\O_n}^2$,  we then get
 
 $$\frac{I_n}{\nlp{\varphi}{2}{\O_n}^2}\le C \frac{\nlp{\varphi}{2}{\O \cap (B_{R_0+n}\setminus B_n)}}{\nlp{\varphi}{2}{\O\cap B_n}}.$$
Thanks to Lemma \ref{bcv-pev-lem-tech}, the right hand side of the above inequality tends to $0$ as $n\to\infty$. 
Hence,  we get
\begin{equation}
\liminf_{n\to \infty}\lambda_v(\lb{\O_n}+a)\le \mu +\liminf_{n\to\infty}\frac{I_n}{\nlp{\varphi_n}{2}{\O_n}^2}=\lambda_p'(\lb{\O}+a)+\delta. \label{bcv-eq-liminf-le-lpprime9}
\end{equation}

Since  the above arguments holds true for any arbitrary $\delta>0$,   the Lemma is proved.
\end{proof}

\begin{remark}
In the above proof, since $w=\varphi \mathds{1}_{\O_n} \in L^{2}(\O)$ and $\oplb{w_n}{\O}=\oplb{w_n}{\O_n}$, the inequality 
\eqref{bcv-eq-liminf-le-lpprime4} is true with $\lambda_v(\lb{\O}+a)$ instead of $\lambda_v(\lb{\O_n}+a)$. Thus, we get immediately
$$\lambda_v(\lb{\O}+a)\le \lambda_p'(\lb{\O}+a).$$
\end{remark}

When $N=1$, the decay restriction imposed on the kernel can be weakened, see \cite{Coville}. In particular,   we have  

\begin{lemma}
Let $\O$ be an unbounded domain and assume  that $a$ and $K$ satisfy \eqref{hyp1}--\eqref{hyp2}. Assume further that $K$ is symmetric  and $K$ satisfies $0\le K(x,y)\leq C(1+|x-y|)^{-\alpha}$ for some $\alpha>\frac{3}{2}$. Then one has $$\lambda_p(\lb{\O}+a)\leq \liminf_{n\to +\infty}\lambda_v(\lb{\O_n}+a)\leq\lambda_p'(\lb{\O} +a),$$
where $\O_n:=\O\cap (-n,n)$.
\end{lemma}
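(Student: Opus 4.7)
The plan is to replicate the proof of Lemma~\ref{bcv-lem-lp-le-lpprime} verbatim, adjusting only the single estimate in which \eqref{hyp3} and the boundedness of $p$ were used. The argument leading up to \eqref{bcv-eq-liminf-le-lpprime_In} relies only on the definitions of $\lambda_v$ and $\lambda_p'$, the pointwise comparison between $\oplb{\varphi}{\O}$ and $\oplb{w_n}{\O_n}$, and the existence of a test function $\varphi\in C(\O)\cap L^\infty(\O)$, normalised by $\|\varphi\|_\infty=1$, satisfying \eqref{bcv-eq-liminf-le-lpprime3}; none of those steps uses \eqref{hyp3} or requires $p\in L^\infty$. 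The whole task therefore reduces to establishing, under the polynomial-decay hypothesis on $K$,
$$\liminf_{n\to\infty}\frac{I_n}{\|\varphi\|_{L^2(\O_n)}^2}=0,\qquad I_n=\int_{\O_n}\int_{\O\setminus\O_n}K(x,y)\,\varphi(y)\varphi(x)\,dy\,dx,$$
with $\O_n=\O\cap(-n,n)$.

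The key is to replace the crude bound \eqref{bcv-eq-liminf-le-lpprime6} by two complementary estimates that exploit the decay of $K$. In dimension one, for $x\in\O_n$ and $y\in\O\setminus\O_n$ one has $|x-y|\ge n-|x|$ and $|x-y|\ge |y|-n$. Inserting $K(x,y)\le C(1+|x-y|)^{-\alpha}$ into the inner integral and using $\alpha>1$ produces
$$\int_{\O\setminus\O_n}K(x,y)\,dy\le C'(1+n-|x|)^{1-\alpha}\ (x\in\O_n),\quad \int_{\O_n}K(x,y)\,dx\le C'(1+|y|-n)^{1-\alpha}\ (y\in\O\setminus\O_n).$$
Combining $\|\varphi\|_\infty\le 1$ with the Cauchy-Schwarz inequality on the outer variable yields
$$I_n\le C_1\|\varphi\|_{L^2(\O_n)}\qquad\text{and}\qquad I_n\le C_2\|\varphi\|_{L^2(\O\setminus\O_n)},$$
where $C_1,C_2<\infty$ are independent of $n$ precisely because $\int_1^{+\infty}u^{2(1-\alpha)}\,du<\infty$, which is equivalent to $\alpha>3/2$.

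With both estimates in hand, I would split on the global $L^2$-behaviour of $\varphi$. If $\varphi\in L^2(\O)$, then $\|\varphi\|_{L^2(\O\setminus\O_n)}\to 0$ while $\|\varphi\|_{L^2(\O_n)}^2\to\|\varphi\|_{L^2(\O)}^2>0$ (the positivity of the limit being ensured by $\|\varphi\|_\infty=1$), so the second bound delivers $I_n/\|\varphi\|_{L^2(\O_n)}^2\to 0$. If $\varphi\notin L^2(\O)$, then $\|\varphi\|_{L^2(\O_n)}^2\to+\infty$ and the first bound gives $I_n/\|\varphi\|_{L^2(\O_n)}^2\le C_1/\|\varphi\|_{L^2(\O_n)}\to 0$. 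Either way the liminf vanishes, and the remaining conclusion $\liminf_n\lambda_v(\lb{\O_n}+a)\le\lambda_p'(\lb{\O}+a)+\delta$ for arbitrary $\delta>0$ follows as at the end of the proof of Lemma~\ref{bcv-lem-lp-le-lpprime}.

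The main obstacle is precisely the decay threshold. The exponent $3/2$ is the critical value that makes the weights $(1+n-|x|)^{1-\alpha}$ and $(1+|y|-n)^{1-\alpha}$ square-integrable on their respective one-dimensional domains, uniformly in $n$; with only $\alpha>1$ these weights are integrable but not square-integrable, and the estimate for a generic $\varphi\in L^\infty\setminus L^2$ would fail. The restriction is also intrinsically one-dimensional: in higher dimensions the tangential directions contribute an extra $n^{N-1}$ factor to the weight integrals and a significantly stronger decay on $K$ would be required before the same dichotomy could close.
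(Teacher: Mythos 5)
Your proof is correct and follows essentially the same route as the paper: the same reduction to $\liminf_{n}I_n/\|\varphi\|_{L^2(\O_n)}^2=0$, the same dichotomy on whether $\varphi\in L^2(\O)$, and the same use of $\alpha>\tfrac32$ to make the relevant weight square-integrable on a one-dimensional strip uniformly in $n$, yielding $I_n\le C\|\varphi\|_{L^2(\O_n)}$ in the non-$L^2$ case. The only cosmetic difference is the order of operations — you apply $\|\varphi\|_\infty\le 1$ first and then Cauchy--Schwarz on the outer variable against the explicit weight $(1+\cdot)^{1-\alpha}$, whereas the paper applies Cauchy--Schwarz inside the inner integral first — and both lead to the same conclusion.
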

\begin{proof}
By arguing as in the above proof, for any $\delta>0$ there exists $\varphi\in C(\O)\cap L^{\infty}(\O)$ such that 
$$\oplb{\varphi}{\O}+(a+\lambda_p(\lb{\O}+a)+\delta)\varphi(x) \ge 0 \quad \text{ in } \quad \O.$$
and   
$$
\lambda_v(\lb{\O_n}+a)\nlp{w_n}{2}{\O_n}^2\le \mu\nlp{w_n}{2}{\O_n}^2+I_n,
$$
where  $\mu:=\lambda_p(\lb{\O}+a)+\delta), w_n:=\varphi\mathds{1}_{(-n,n)}$ and $I_n$ denotes 
\begin{equation} \label{bcv-eq-liminf-le-lpprimeN1}
I_n:=\int_{\O_n}\left(\int_{\O\setminus \O_n}K(x,y)\varphi(y)\,dy\right)\varphi(x)\,dx.
\end{equation}

As above, we  end our proof  by showing 
\begin{equation}\label{bcv-eq-limIn}
\liminf_{n\to\infty}\frac{I_n}{\nlp{\varphi}{2}{\O_n}^2}=0.
\end{equation}

Let us now treat two cases independently:
\subsubsection*{Case 1: $ \varphi \in L^2(\O)$}

In this situation, again by using  Cauchy-Schwarz's inequality,  Jensen's inequality  and Fubini's Theorem,  the inequality \eqref{bcv-eq-liminf-le-lpprimeN1} yields 
$$
I_n\leq \nlp{\varphi}{2}{\O_n} \left[\int_{\O\setminus \O_n}\left(\int_{\O_n}K^2(x,y)\,dx\right)\varphi^2(y)\,dy \right]^{\frac{1}{2}}.$$
Recall that  $K$ satisfies $K(x,y)\le C(1+|x-y|)^{-\alpha}$ for some $C>0$ and $\alpha>3/2$, therefore $p(y):=\int_{\O}K(x,y)\,dx$ is bounded and from the latter inequality we enforce 
$$
I_n\leq C\nlp{\varphi}{2}{\O\setminus \O_n}\nlp{\varphi}{2}{\O_n}.
$$
Thus, 
$$\liminf_{n\to\infty}\frac{I_n}{\nlp{\varphi}{2}{\O_n}^2}\le \liminf_{n\to\infty}\frac{\nlp{\varphi}{2}{\O\setminus \O_n}}{\nlp{\varphi}{2}{\O_n}}=0.$$

\subsubsection*{Case 2: $ \varphi \not\in L^2(\O)$}
Assume now that $\varphi\not \in L^2(\O)$, then we argue as follows.
  Again, applying Fubini's Theorem and Cauchy-Schwarz's inequality in the inequality \eqref{bcv-eq-liminf-le-lpprimeN1} yields
\begin{align}
I_n&\leq \nlp{\varphi}{2}{\O_n} \left[\int_{(\O\cap \R^-)\setminus \O_n}\left(\left(\int_{\O_n}K(x,y)^2\,dx\right)^{\frac{1}{2}}\right)\varphi(y)\,dy+\int_{(\O\cap \R^+) \setminus \O_n}\left(\left(\int_{ \O_n}K(x,y)^2\,dx\right)^{\frac{1}{2}}\right)\varphi(y)\,dy\right], \nonumber\\
&\leq \nlp{\varphi}{2}{\O_n} \left[\tilde I_n^- +\tilde I_n^+\right].\label{bcv-eq-In+}
\end{align}

Recall that by assumption there exists $C>0$ such that $K(x,y)\leq C(1+|x-y|)^{-\alpha}$ with $\alpha>\frac{3}{2}$. So,
we have 
\begin{align*}
&\tilde I_n^-\le C\int_{(\O\cap \R^-)\setminus \O_n}\left(\left(\int_{\O_n}(1+|x-y|)^{-2\alpha}\,dx\right)^{\frac{1}{2}}\right)\varphi(y)\,dy,\\
& \tilde I_n^+\le C\int_{(\O\cap \R^+)\setminus \O_n}\left(\left(\int_{-n}^n(1+|x-y|)^{-2\alpha}\,dx\right)^{\frac{1}{2}}\right)\varphi(y)\,dy.
\end{align*}
To complete our proof, we have to show that  $\frac{\tilde I_n^{\pm}}{\nlp{\varphi}{2}{\O_n}}\to 0$. The proof being similar in both cases, so we only prove that $\frac{\tilde I_n^{+}}{\nlp{\varphi}{2}{\O_n}}\to 0$.
We claim that 

\begin{claim}
 There exists $C>0$ so that for all $n\in\N$, 
 $$\int_{(\O\cap \R^+)\setminus \O_n}\left(\left(\int_{\O_n}(1+|x-y|)^{-2\alpha}\,dx\right)^{\frac{1}{2}}\right)\varphi(y)\,dy\le C.$$ 
\end{claim}
Assume for the moment that the claim holds true, then from   \eqref{bcv-eq-In+},  we deduce that  
$$\frac{I_n}{\nlp{\varphi}{2}{\O_n}^2}\le \frac{C}{\nlp{\varphi}{2}{\O_n}}\to 0 \qquad \text{ when }\quad n\to \infty.$$

Hence, in both situation,  we get
\begin{equation*}
\liminf_{n\to \infty}\lambda_v(\lb{\O_n}+a)\le \mu +\liminf_{n\to\infty}\frac{I_n}{\nlp{\varphi_n}{2}{\O_n}^2}=\lambda_p'(\lb{\O}+a)+\delta 
\end{equation*}
Since $\delta >0$  can be chosen arbitrary,  the above inequality  is true for any $\delta>0$ and the Lemma is proved.

\end{proof}

\begin{proof}[Proof of the Claim]
Since $\varphi\in L^{\infty}(\O)$ and  $y\ge n$ then  $x\le y$ and we have 
\begin{align*}
\tilde I_n^+&\le \|\varphi\|_{\infty}\int_{\O\cap \R^+ \setminus \O_n}\left(\left(\int_{\O_n}(1+y-x)^{-2\alpha}\,dx\right)^{\frac{1}{2}}\right)\,dy,\\
&\le \|\varphi\|_{\infty}\int_n^{+\infty}\left(\left(\int_{-n}^n(1+y-x)^{-2\alpha}\,dx\right)^{\frac{1}{2}}\right)\,dy,\\
&\le \frac{\|\varphi\|_{\infty}}{\sqrt{2\alpha -1}} \int_n^{+\infty}(1+y-n)^{-\alpha+\frac{1}{2}}\,dy,\\
&\le C\int_0^{+\infty}(1+z)^{-\alpha+\frac{1}{2}}\,dz.
\end{align*}

\end{proof}


\section{Asymptotic behaviour of the principal eigenvalue under scaling}\label{bcv-section-scal}

In this section, we investigate further the properties of the principal eigenvalue $\lambda_p(\lb{\O}+a)$ and in particular its behaviour with respect to some scaling of the kernel $K$ ((Proposition \ref{bcv-pev-prop-scal-eq}) and Theorem \ref{bcv-pev-thm4}). For simplicity,  we split this section into two subsections, one  dedicated to the the proof of Proposition \ref{bcv-pev-prop-scal-eq} and the other one  dealing with the proof of Theorem \ref{bcv-pev-thm4}.  
Let us start with the scaling invariance of $\lb{\O}+a$, (Proposition \ref{bcv-pev-prop-scal-eq})
\subsection{Scaling invariance}

This invariance is a consequence of the following observation. By definition  of  $\lambda_p(\lb{\O}+a)$, we have for all $\lambda<\lambda_p(\lb{\O}+a)$, 
$$\oplb{\varphi}{\O}(x)+(a(x)+\lambda)\varphi(x) \le 0 \quad \text{ in }\quad \O,$$
for some positive $\varphi \in C(\O)$.
Let  $ X=\sigma x$, $\O_\sigma:=\frac{1}{\sigma}\O$ and $\psi(X):=\varphi(\sigma X)$ then we can rewrite the above inequality as follows 
\begin{align*}
&\int_{\O}K\left(\frac{X}{\sigma},y\right)\varphi(y)\,dy +(a\left(\frac{X}{\sigma}\right)+\lambda)\varphi\left(\frac{X}{\sigma}\right)\le 0 \quad \text{ for any  } \quad X\in \O_\sigma,\\
&\int_{\O}K\left(\frac{X}{\sigma},y\right)\varphi(y)\,dy +(a_\sigma(X)+\lambda)\psi(X)\le 0 \quad \text{ for any  } \quad X\in \O_\sigma,\\
&\int_{\O_\sigma}K_\sigma\left(X,Y\right)\psi(Y)\,dY +(a_\sigma(X)+\lambda)\psi(X)\le 0 \quad \text{ for any  } \quad X\in \O_\sigma,\\
\end{align*}
with $K_\sigma(x,y):=\frac{1}{\sigma^N} K(\frac{x}{\sigma},\frac{y}{\sigma})$ and $a_\sigma(x):=a\left(\frac{x}{\sigma}\right)$.
Thus $\psi$ is a positive continuous function that satisfies
$$\oplb{\psi}{\sigma,\O_\sigma}(x)+(a_\sigma(x)+\lambda)\psi(x)\le 0 \quad \text{ in }\quad \O_\sigma. $$
Therefore, $\lambda \le \lambda_p(\lb{\sigma,\O_\sigma}+a_\sigma)$ and as a consequence 
$$ \lambda_p(\lb{\O}+a)\le\lambda_p(\lb{\sigma,\O_\sigma}+a_\sigma).$$
Interchanging the role of $\lambda_p(\lb{\O}+a)$ and $\lambda_p(\lb{\sigma,\O_\sigma}+a_\sigma)$ in the above argument yields
$$ \lambda_p(\lb{\O}+a)\ge\lambda_p(\lb{\sigma,\O_\sigma}+a_\sigma).$$

Hence, we get
$$
\lambda_p(\lb{\O}+a)=\lambda_p(\lb{\sigma,\O_\sigma}+a_\sigma).
$$ 

\fdem

\subsection{Asymptotic limits of $\ds{\lambda_p\left(\L_{\sigma,m,\O}-\frac{1}{\sigma^m}+a\right)}$}
Let us  focus on the behaviour of the principal eigenvalue of  the spectral problem
$$\opmem{\varphi}{m,\O }+(a+\lambda)\varphi=0 \quad \text{ in }\quad \O,$$
 where $$\opmem{\varphi}{m,\O}:=\frac{1}{\sigma^m} \left(\int_{\O}J_\sigma(x-y)\varphi(y)\,dy -\varphi(x)\right),$$ with 
 $J_\sigma(z):=\frac{1}{\sigma^N}J\left(\frac{z}{\sigma}\right)$.  
 Assuming that $0\le m\le 2$, we obtain here the limits of $\lambda_p(\M_{\sigma,m}+a)$ when $\sigma \to 0$ and $\sigma\to \infty$. 
 But before going to the study of these limits, we recall a known inequality.
\begin{lemma}\label{bcv-lem-I-le-J}
Let $J\in C(\R^N)$, $J\ge 0$, $J$ symmetric with unit mass, such that $|z|^2J(z) \in L^1(\R^N)$ . Then for all  $\varphi \in H_0^1(\O)$ we have
$$-\int_{\O}\left(\int_{\O}J(x-y)\varphi(y)\,dy-\varphi(x)\right)\varphi(x)\,dx\le \frac{1}{2}\int_{\R^N}J(z)|z|^2\,dz \nlp{\nabla \varphi}{2}{\O}^2.$$ 
\end{lemma}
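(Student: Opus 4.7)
The plan is to reduce the nonlocal quadratic form on the left-hand side to an integrated squared finite difference, and then control each finite difference by the Dirichlet energy via the fundamental theorem of calculus.

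First I would extend $\varphi$ by zero outside $\O$ to obtain $\tilde\varphi\in H^1(\R^N)$; since $\varphi\in H^1_0(\O)$, this extension satisfies $\|\nabla\tilde\varphi\|_{L^2(\R^N)}=\|\nabla\varphi\|_{L^2(\O)}$. Using $\int_{\R^N}J=1$, symmetry of $J$, and the fact that $\tilde\varphi$ vanishes off $\O$, I would rewrite the left-hand side as
\[
-\int_{\O}\!\left(\!\int_{\O}J(x-y)\varphi(y)\,dy-\varphi(x)\!\right)\!\varphi(x)\,dx
=\tfrac12\iint_{\R^N\times\R^N}J(x-y)\bigl(\tilde\varphi(x)-\tilde\varphi(y)\bigr)^2\,dx\,dy.
\]
This is the standard polarization identity that turns the bilinear form into a Dirichlet-type form.

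Next, I would perform the change of variables $z=x-y$ (keeping $y$ fixed), which by Fubini yields
\[
\tfrac12\int_{\R^N}J(z)\!\int_{\R^N}\bigl(\tilde\varphi(y+z)-\tilde\varphi(y)\bigr)^2\,dy\,dz.
\]
For smooth compactly supported $\tilde\varphi$ I would write $\tilde\varphi(y+z)-\tilde\varphi(y)=\int_0^1\nabla\tilde\varphi(y+tz)\cdot z\,dt$. Cauchy--Schwarz in $t$ then gives $\bigl(\tilde\varphi(y+z)-\tilde\varphi(y)\bigr)^2\le |z|^2\int_0^1|\nabla\tilde\varphi(y+tz)|^2\,dt$. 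Integrating in $y$ and using translation invariance of Lebesgue measure produces $\int_{\R^N}\bigl(\tilde\varphi(y+z)-\tilde\varphi(y)\bigr)^2dy\le |z|^2\|\nabla\tilde\varphi\|_{L^2}^2$. Multiplying by $J(z)$ and integrating in $z$ yields the desired bound with constant $\tfrac12\int_{\R^N}J(z)|z|^2\,dz$.

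Finally, I would pass from $C^\infty_c$ to $H^1_0(\O)$ by density: approximate $\varphi\in H^1_0(\O)$ by $\varphi_k\in C^\infty_c(\O)$ in $H^1_0$, observe that both sides are continuous with respect to this convergence (the right-hand side trivially, and the left-hand side because $J\in L^1$ makes the convolution continuous on $L^2$), and pass to the limit. The only step requiring care is ensuring the double integral on the left is absolutely convergent throughout the limit, which is guaranteed by $J\in L^1$ and $\varphi\in L^2$. There is no genuine obstacle here; the argument is a textbook computation whose only subtlety is the careful bookkeeping needed to turn the one-sided expression $-\int_\O(J\ast\varphi-\varphi)\varphi$ into a symmetric form, which requires the zero extension and the hypothesis $\varphi\in H^1_0(\O)$ (not merely $H^1$).
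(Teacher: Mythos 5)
Your proposal is correct and follows essentially the same route as the paper: the polarization identity turning $-\int_\O(\lb{\O}[\varphi]-\varphi)\varphi$ into $\tfrac12\iint J(x-y)(\tilde\varphi(x)-\tilde\varphi(y))^2$ via the zero extension and the unit-mass hypothesis, then the fundamental theorem of calculus plus Cauchy--Schwarz and Fubini to bound each squared increment by $|z|^2\|\nabla\varphi\|_{L^2}^2$, and finally density of $C_c^\infty(\O)$ in $H^1_0(\O)$. The only cosmetic difference is that the paper writes the Cauchy--Schwarz step componentwise with the discrete inequality $(\sum_i a_ib_i)^2\le(\sum_i a_i^2)(\sum_i b_i^2)$, whereas you apply it directly to the dot product $\nabla\tilde\varphi\cdot z$; the content is identical.
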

\begin{proof}

Let $\varphi\in C^{\infty}_c$, then by applying the standard Taylor expansion we have 
\begin{align}
\varphi(x+z)-\varphi(x)&=\int_{0}^1z_i\partial_i \varphi(x+tz)\,dt \label{bcv-eq-taylor1} \\
&=z_i\partial_i\varphi(x)+\int_{0}^1t\left(\int_{0}^1 z_iz_j\partial_{ij} \varphi(x+tsz)\,ds\right)dt\label{bcv-eq-taylor2}
\end{align}
where use the Einstein summation convention $a_ib_i=\sum_{i=1}^Na_ib_i$.\\
Let us denote
$$\I(\varphi):=-\int_{\O}\left(\int_{\O}J(x-y)\varphi(y)\,dy-\varphi(x)\right)\varphi(x)\,dx.$$
 Then, for any $\varphi \in C_c(\O)$, $\varphi \in C_c(\R^N)$ and we can easily see that  
$$\I(\varphi)=\frac{1}{2}\iint_{\R^{2N}}J(x-y)(\varphi(x)-\varphi(y))^2\,dxdy.$$

By plugging the Taylor expansion of $\varphi$ \eqref{bcv-eq-taylor1} in the above equality we see that

\begin{align*}
\frac{1}{2}\iint_{\R^{2N}}J(z)(\varphi(x+z)-\varphi(x))^2\,dzdx&=\frac{1}{2}\int_{\R^N}\int_{\R^N}J(z)\left(\int_0^1 z_i\partial_i\varphi(x+tz)dt\right)^2\,dzdx,\\
&\le\frac{1}{2}\iint_{\R^{2N}}J(z)\left(|z_i|\left[\int_0^1|\partial_i\varphi(x+tz)|^2dt\right]^{\frac{1}{2}}\right)^2\,dzdx,\\
&\le\frac{1}{2}\iint_{\R^{2N}}J(z)\left(\sum_{i=1}^Nz_i^2\right)\left[\sum_{i=1}^N\int_0^1|\partial_i\varphi(x+tz)|^2dt\right]\,dzdx
 \end{align*}
where we use in the last inequality the standard inequality  $(\sum_{i}a_ib_i)^2\le (\sum_{i=1}^Na_i^2)(\sum_{i=1}^Nb_i^2)$.

So, by  Fubini's Theorem and by rearranging the terms in the above inequality, it follows that 
$$
\I(\varphi)\le \frac{1}{2}\left(\int_{\R^N}J(z)|z|^2\,dz \right) \nlp{\nabla \varphi}{2}{\O}^2.
$$
 By density of $C_c^{\infty}(\O)$ in $H^1_0(\O)$, the above  inequality holds true for $\varphi \in H^1_0(\O)$, since obviously the functional $\I(\varphi)$  is continuous in $L^2(\O)$.

\end{proof}

Let us also introduce  the following notation 
\begin{align*}
&J_\sigma(z):=\frac{1}{\sigma^N}J\left(\frac{z}{\sigma}\right), \qquad \quad p_\sigma(x):=\int_{\O}J_\sigma(x-y)\,dy, \qquad \quad  D_2(J):=\int_{\R^N}J(z)|z|^2\,dz,\\
&\a(\varphi):=\frac{\int_{\O}a\varphi^2(x)\,dx} {\nlp{\varphi}{2}{\O}^2},\qquad \quad \r_{\sigma,m}(\varphi):=\frac{1}{\sigma^m}\frac{\int_{\O}(p_\sigma(x)-1)\varphi^2(x)\,dx} {\nlp{\varphi}{2}{\O}^2},\\
&\I_{\sigma,m}(\varphi):=\frac{\frac{1}{\sigma^m}\left(-\int_{\O}\left(\int_{\O}J(x-y)\varphi(y)\,dy-\varphi(x)\right)\varphi(x)\,dx\right)}{\nlp{\varphi}{2}{\O}^2} -\a(\varphi)\\
&\j(\varphi):=\frac{D_2(J)}{2}\frac{\int_{\O}|\nabla \varphi|^2(x)\,dx} {\nlp{\varphi}{2}{\O}^2}.
\end{align*}

With this notation, we see that  
$$\lambda_v(\M_{\sigma,m,\O} +a)=\inf_{\varphi \in L^2(\O)} \I_{\sigma,m}(\varphi), $$

and by Lemma \ref{bcv-lem-I-le-J}, for any $\varphi \in H_0^1(\O)$ we get 

\begin{equation}\label{bcv-eq-I-le-J-R} 
  \I_{\sigma,m}(\varphi)\le \sigma^{2-m}\j(\varphi) -\a(\varphi).
  \end{equation}

We are now in position to obtain the different limits of $\lambda_p(\M_{\sigma,m,\O} +a)$ as $\sigma \to 0$ and $\sigma\to \infty$. For simplicity,  we analyse three distinct situations: $m=0, 0<m<2$ and $m=2$. We will see that $m=0$ and $m=2$ are ,indeed, two critical situations.

 Let us first deal with the easiest case, that is, when $0<m<2$.
 \subsubsection{The case $0<m<2$:}
In this situation, we claim that 

\begin{claim}
Let $\O$ be any  domain and let $J\in C(\R^N)$ be positive, symmetric and such that $|z|^2J(z)\in L^1(\R^N)$. Assume further that $J$ satisfies \eqref{hyp1}--\eqref{hyp3} and $0<m<2$ then 

\begin{align*}
& \lim_{\sigma \to 0} \lambda_p(\M_{\sigma,m,\O} +a)=-\sup_{\O} a\\
& \lim_{\sigma \to +\infty}\lambda_p(\M_{\sigma,m,\O} +a)=-\sup_{\O} a
\end{align*} 

\end{claim}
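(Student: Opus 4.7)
The plan is to identify $\lambda_p$ with its Rayleigh–quotient version $\lambda_v$ and then test with a function concentrated near a near-maximiser of $a$. Because $J$ is even, the kernel of $\mb{\sigma,m,\O}$ is symmetric, and the associated weight $p(x):=\sigma^{-m}p_\sigma(x)$ is bounded by $\sigma^{-m}$, so Theorems~\ref{bcv-pev-thm1}--\ref{bcv-pev-thm2} (in the self-adjoint case) yield
$$\lambda_p(\mb{\sigma,m,\O}+a)=\lambda_v(\mb{\sigma,m,\O}+a)=\inf_{\varphi\in L^2(\O)\setminus\{0\}}\frac{Q_\sigma(\varphi)}{\nlp{\varphi}{2}{\O}^2},$$
where a standard symmetrisation (using $J_\sigma(x-y)=J_\sigma(y-x)$) gives
$$Q_\sigma(\varphi)=\frac{1}{2\sigma^m}\iint_{\O\times\O}J_\sigma(x-y)(\varphi(x)-\varphi(y))^2\,dxdy+\frac{1}{\sigma^m}\int_\O(1-p_\sigma)\varphi^2\,dx-\int_\O a\,\varphi^2\,dx.$$
Since $p_\sigma\le\int_{\R^N}J=1$, the first two terms are nonnegative, whence the universal lower bound $\lambda_p(\mb{\sigma,m,\O}+a)\ge-\sup_\O a$ for every $\sigma>0$.

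For the matching upper bound I will fix $\eps>0$ and, using continuity of $a$ together with $\sup_\O a=\sup_{\bar\O}a$, pick $x_0\in\O$ and $\rho>0$ with $B_{2\rho}(x_0)\subset\O$ and $a\ge\sup_\O a-\eps$ on $B_{2\rho}(x_0)$; the test function will be some $\varphi\in C_c^\infty(B_\rho(x_0))$ with $\nlp{\varphi}{2}{\O}=1$. In the $\sigma\to 0$ regime, the compact support of $J$ from \eqref{hyp3} (say $\mathrm{supp}(J)\subset B_{r_0}(0)$) ensures that as soon as $\sigma r_0<\rho$ the ball $B_{\sigma r_0}(x)$ lies inside $B_{2\rho}(x_0)\subset\O$ for every $x\in\mathrm{supp}(\varphi)$, forcing $p_\sigma\equiv 1$ on $\mathrm{supp}(\varphi)$ and wiping out the middle term of $Q_\sigma(\varphi)$. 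Lemma~\ref{bcv-lem-I-le-J} applied to $J_\sigma$ (for which $D_2(J_\sigma)=\sigma^2 D_2(J)$) then dominates the Dirichlet form by $\sigma^{2-m}\,D_2(J)\,\nlp{\nabla\varphi}{2}{\O}^2/2$, which vanishes as $\sigma\to 0$ because $m<2$. Combined with $-\int_\O a\varphi^2\le-(\sup_\O a-\eps)$, this delivers $\limsup_{\sigma\to 0}\lambda_p(\mb{\sigma,m,\O}+a)\le-\sup_\O a+\eps$, and sending $\eps\downarrow 0$ proves the first limit.

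For $\sigma\to+\infty$ the same $\varphi$ is handled by crude universal bounds: $(\varphi(x)-\varphi(y))^2\le 2\varphi(x)^2+2\varphi(y)^2$ combined with $\int J_\sigma=1$ gives $\iint J_\sigma(x-y)(\varphi(x)-\varphi(y))^2\,dxdy\le 4\nlp{\varphi}{2}{\O}^2$, while $(1-p_\sigma)\le 1$ yields $\int_\O(1-p_\sigma)\varphi^2\le 1$, so both averaging terms of $Q_\sigma(\varphi)$ are $O(\sigma^{-m})\to 0$ since $m>0$; the same $\eps$-argument then closes the second limit. I expect the main technical point to be the choice of an interior basepoint $x_0\in\O$ and the vanishing of $\sigma^{-m}\int(1-p_\sigma)\varphi^2$ as $\sigma\to 0$: the first is handled automatically by continuity of $a$ on $\bar\O$, while the second relies crucially on the compact support of $J$ guaranteed by \eqref{hyp3}. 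Without this compact support one would have to dominate $\sigma^{-m}(1-p_\sigma)$ on $\mathrm{supp}(\varphi)$ by a more delicate tail estimate.
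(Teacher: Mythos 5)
Your proposal is correct and follows essentially the same route as the paper: reduce to the Rayleigh quotient $\lambda_v$ via the self-adjointness theorems, obtain the lower bound $-\sup_\O a$ from the nonnegativity of the diffusive part (equivalently, the constant test function), and for the upper bound test with a $C_c^\infty$ function concentrated near a near-maximiser of $a$, using Lemma~\ref{bcv-lem-I-le-J} to get the $\sigma^{2-m}$ decay as $\sigma\to 0$ and crude $O(\sigma^{-m})$ bounds as $\sigma\to+\infty$ (the paper invokes Proposition~\ref{bcv-prop-pev}(iv) for the latter, which is the same estimate). The only cosmetic difference is that you dispose of the $\sigma^{-m}\int(1-p_\sigma)\varphi^2$ term via the compact support of $J$, whereas the paper's Lemma~\ref{bcv-lem-I-le-J} absorbs it automatically by extending $\varphi$ by zero to $\R^N$; both are valid here.
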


 \begin{proof}

  First, let us look at the limit  of $\lambda_p$ when $\sigma \to 0$.  
 Up to adding a large positive constant to the function $a$, without any loss of generality,  we can assume that the function $a$ is positive somewhere in $\O$. 
 
 Since $\M_{\sigma,m,\O}+a$ is a self-adjoined operator, by Theorem \ref{bcv-pev-thm2} and \eqref{bcv-eq-I-le-J-R}, for any $\varphi \in H^1_0(\O)$ we have
 $$\lambda_p(\M_{\sigma,m,\O}+a)=\lambda_v(\M_{\sigma,m,\O}+a)\le \I_{\sigma,m}(\varphi)\le \sigma^{2-m}\j(\varphi) -\a(\varphi).$$
 
 Define $\nu:=\sup_{\O} a$, and let $(x_n)_{n\in \N}$  be a sequence of point such that $|\nu -a(x_n)|<\frac{1}{n}$. Since $a$ is positive somewhere, we can also assume that for all $n$, $x_n \in \Gamma:=\{x\in\O \,|\, a(x)>0\}$.  
 
By construction, for any $n>0$, there exists $\rho_n$ such that $B_{\rho}(x_n)\subset \Gamma$ for any positive  $\rho\le \rho_n$. Fix now $n$,  for any $0<\rho\le \rho_n$ there exists $\varphi_\rho \in H_0^1(\O)$  such that $supp(\varphi_\rho)\subset B_\rho(x_n)$ and  therefore, 

 $$\limsup_{\sigma\to 0} \lambda_p(\M_{\sigma,m,\O}+a)\le -\a(\varphi_\rho)=-\frac{\int_{B_\rho(x_n)}a\varphi_\rho^2(x)\,dx}{\nlp{\varphi_\rho}{2}{\O}^2}\le -\min_{B_{\rho}(x_n)}a^+(x).$$  
By taking the limit $\rho \to 0$ in the above inequality, we then get 

 $$\limsup_{\sigma\to 0} \lambda_p(\M_{\sigma,m,\O}+a)\le -a(x_n).$$
Thus, 
$$\limsup_{\sigma\to 0} \lambda_p(\M_{\sigma,m,\O}+a)\le -\nu +\frac{1}{n}.$$
 By sending now $n \to \infty$ in the above inequality, we obtain
 
  $$\limsup_{\sigma\to 0} \lambda_p(\M_{\sigma,m,\O}+a)\le -\nu.$$
  
  On the other hand, by using the test function $(\varphi,\lambda)=(1,-\nu)$ we can easily check that for any $\sigma >0$
  
$$\lambda_p(\M_{\sigma,m,\O}+a)\ge -\nu.$$

Hence, 
 $$-\nu\le \liminf_{\sigma\to 0} \lambda_p(\M_{\sigma,m,\O}+a)\le \limsup_{\sigma\to 0} \lambda_p(\m_{\sigma,m,\O}+a)\le -\nu.$$

Now, let us  look at the limit of $\lambda_p(\m_{\sigma,m,\O}+a)$ when $\sigma \to +\infty$. 
This limit is a straightforward consequence of (iv) of the Proposition \ref{bcv-prop-pev}.
Indeed, as remarked above, for any $\sigma$ by using the test function $(\varphi,\lambda)=(1,-\nu)$, we have
$$-\nu \le \lambda_p(\m_{\sigma,m,\O}+a)$$
whereas from (iv) of the Proposition \ref{bcv-prop-pev} we have 
$$ \lambda_p(\m_{\sigma,m,\O}+a)\le -\sup_{\O}\left(-\frac{1}{\sigma^m}+a\right). $$
Therefore, since $m>0$ we have 

$$-\nu \le \lim_{\sigma\to +\infty} \lambda_p(\m_{\sigma,m,\O}+a)\le -\nu. $$
\end{proof}

\begin{remark}\label{bcv-pev-rem-asymp}.
From the proof, we  obtain also some of the limits in the  cases  $m=0$ and $m=2$.  Indeed, the analysis of the limit of $\lambda_p(\m_{\sigma,m,\O}+a)$ when $\sigma \to 0$ holds true as soon as $m<2$. Thus,  $$\lambda_p(\m_{\sigma,0,\O}+a)\to -\sup_{\O} a \quad\text{ as } \quad\sigma \to 0.$$
On the other hand, the  analysis of the  limit of $\lambda_p(\m_{\sigma,m,\O}+a)$ when $\sigma \to +\infty$ holds true as soon as $m>0$. Therefore, $$\lambda_p(\m_{\sigma,2,\O}+a)\to -\sup_{\O} a \quad\text{ as } \quad\sigma \to +\infty.$$ 

\end{remark}
\subsubsection{The case $m=0$}
In this situation,   one of the above argument fails and  one of the expected limits  is not $-\nu$ any more. Indeed, we have

\begin{lemma}
Let $\O$ be any  domain and let $J\in C(\R^N)$ be positive, symmetric and such that $|z|^2J(z)\in L^1(\R^N)$. Assume further that $J$ satisfies \eqref{hyp1}--\eqref{hyp3} and $m=0$ then 
\begin{align*}
& \lim_{\sigma \to 0} \lambda_p(\m_{\sigma,0,\O} +a)=-\sup_{\O} a\\
& \lim_{\sigma \to +\infty}\lambda_p(\m_{\sigma,0,\O} +a)=1-\sup_{\O} a
\end{align*} 

\end{lemma}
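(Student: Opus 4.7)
The plan is to split the result into the two limits and, wherever possible, reduce to arguments already carried out in the $0<m<2$ case above.

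For the limit $\sigma\to 0$, I would appeal directly to Remark \ref{bcv-pev-rem-asymp}: the computation given in the previous claim for $0<m<2$ at $\sigma\to 0$ uses only that $\sigma^{2-m}\to 0$, which is equally true for $m=0$. Concretely, the Rayleigh-type bound of Lemma \ref{bcv-lem-I-le-J} together with Theorem \ref{bcv-pev-thm2} gives
$$\lambda_p(\mb{\sigma,0,\O}+a)=\lambda_v(\mb{\sigma,0,\O}+a)\le \I_{\sigma,0}(\varphi)\le \sigma^2\j(\varphi)-\a(\varphi)$$
for every $\varphi\in H^1_0(\O)$. Selecting $\varphi_\rho\in H^1_0(\O)$ supported in a small ball around a point where $a$ is close to $\nu:=\sup_\O a$, then sending first $\sigma\to 0$ and next $\rho\to 0$, yields $\limsup_{\sigma\to 0}\lambda_p(\mb{\sigma,0,\O}+a)\le -\nu$. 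The matching lower bound is obtained by inserting the pair $(\varphi,\lambda)=(1,-\nu)$ into the definition of $\lambda_p$: since $\opmb{1}{\sigma,0,\O}=p_\sigma-1\le 0$ and $a-\nu\le 0$, one has $\lambda_p(\mb{\sigma,0,\O}+a)\ge -\nu$.

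For the limit $\sigma\to+\infty$, the same constant test function produces a sharper lower bound. The inequality $\opmb{1}{\sigma,0,\O}+(a+\lambda)\cdot 1\le 0$ in $\O$ reads $\lambda\le 1-p_\sigma(x)-a(x)$ for every $x\in\O$, whence
$$\lambda_p(\mb{\sigma,0,\O}+a)\ge 1-\sup_\O(p_\sigma+a).$$
For the matching upper bound I would invoke (iv) of Proposition \ref{bcv-prop-pev} applied to $\lb{\sigma,0,\O}$ with effective potential $a-1$; since $\sup_\O(a-1)=\nu-1$, this yields $\lambda_p(\mb{\sigma,0,\O}+a)\le 1-\nu$.

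Everything then reduces to showing $\sup_\O p_\sigma\to 0$ as $\sigma\to+\infty$, for this forces $\sup_\O(p_\sigma+a)\to\nu$ and squeezes the limit to $1-\nu$. Writing $p_\sigma(x)=\int_{(\O-x)/\sigma}J(z)\,dz$, in the bounded case $|(\O-x)/\sigma|=|\O|\sigma^{-N}\to 0$ uniformly in $x$, so dominated convergence gives $\sup_\O p_\sigma\to 0$ at once. I expect this last step to be the main technical point: for unbounded $\O$ the function $p_\sigma$ need not vanish uniformly, and extracting the decay in full generality would require some additional geometric condition on $\O$ relative to the support (or decay) of $J$.
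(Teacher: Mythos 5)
Your treatment of the limit $\sigma\to 0$ and of the upper bound $\limsup_{\sigma\to+\infty}\lambda_p(\mb{\sigma,0,\O}+a)\le 1-\nu$ (via Proposition \ref{bcv-prop-pev} (iv) with potential $a-1$) coincides with the paper's. The genuine gap is in the lower bound as $\sigma\to+\infty$. The constant test function gives, as you say, $\lambda_p(\mb{\sigma,0,\O}+a)\ge 1-\sup_\O(p_\sigma+a)$, which is sharp only if $\sup_\O p_\sigma\to 0$. But the lemma is asserted for \emph{every} domain, and the paper explicitly emphasizes that $\O=\R^N$ is included; there $p_\sigma\equiv 1$ for all $\sigma$, so your bound degenerates to the trivial $\lambda_p\ge-\nu$, and the same happens in any unbounded domain containing arbitrarily large balls. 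Imposing ``an additional geometric condition on $\O$'' would therefore not repair the argument but weaken the statement: the constant-test-function route cannot produce the constant $1$ precisely in the cases where the result is most interesting.

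What the paper does instead is to work with the quadratic form rather than with a pointwise supersolution. Since $\mb{\sigma,0,\O}+a$ is self-adjoint with $p_\sigma\in L^{\infty}(\O)$, Theorem \ref{bcv-pev-thm2} gives $\lambda_p(\mb{\sigma,0,\O}+a)=\lambda_v(\mb{\sigma,0,\O}+a)=\inf_{\varphi}\I_{\sigma,0}(\varphi)$, and the off-diagonal term is estimated, uniformly over test functions $\varphi\in C_c(\O)$, by
$$\iint_{\O\times\O}J_\sigma(x-y)\varphi(x)\varphi(y)\,dx\,dy\le \sqrt{\|J_\sigma\|_{\infty}}\,\nlp{\varphi}{2}{\O}^2=\frac{\sqrt{\|J\|_{\infty}}}{\sigma^{N/2}}\,\nlp{\varphi}{2}{\O}^2,$$
so that $\I_{\sigma,0}(\varphi)\ge 1-\nu-\sqrt{\|J\|_{\infty}}\,\sigma^{-N/2}$ for every $\varphi$, whence $\liminf_{\sigma\to+\infty}\lambda_p(\mb{\sigma,0,\O}+a)\ge 1-\nu$ in any domain. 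The mechanism is that as $\sigma\to+\infty$ the whole operator $\lb{\sigma,0,\O}$ becomes negligible as a bilinear form on $L^2$ (its kernel spreads out and $\|J_\sigma\|_{\infty}=\sigma^{-N}\|J\|_{\infty}\to 0$), even though $p_\sigma$ does not tend to $0$ pointwise; a constant test function cannot detect this, but the Rayleigh quotient does. To close your proof you would need to replace your last step by an estimate of this type (an equivalent alternative is a Fourier argument using $\widehat{J_\sigma}(\xi)=\widehat{J}(\sigma\xi)\to 0$ for $\xi\neq 0$).
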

 
\begin{proof}
As already noticed in Remark \ref{bcv-pev-rem-asymp}, the limit of $\lambda_p(\m_{\sigma,0,\O}+a)$ when $\sigma \to 0$ can be obtained by following the arguments developed in the case $0<m<2$. 
 
Therefore, it remains only to establish the limit of $\lambda_p(\m_{\sigma,0,\O}+a)$ when $\sigma \to \infty$.

As above, up to adding a large positive constant to $a$, without any loss of generality, we can assume that $a$ is positive somewhere in $\O$ and we denote  $\nu:=\sup_{\O}a>0$. 
By using constant test functions and (iv) of the Proposition \ref{bcv-prop-pev},    we observe that

$$-\nu\le \lambda_p(\m_{\sigma,0,\O}+a)\le 1 -\nu, \quad \text{ for all }\quad \sigma >0.$$
So, we have $$\limsup_{\sigma\to \infty}\lambda_p(\m_{\sigma,0,\O}+a)\le 1 -\nu.$$
On the other hand, for any $\varphi \in C_c(\O)$  we have for all $\sigma$,

\begin{align*}
\I_{\sigma,0}(\varphi) \int_{\O}\varphi^2(x)&=-\int_{\O}\left(\int_{\O} J_\sigma(x-y)\varphi(y)\,dy-\varphi(x)\right)\varphi(x)dx -\int_{\O}a\varphi^2(x)\,dx,\\
&=-\iint_{\O\times \O} J_\sigma(x-y)\varphi(x)\varphi(y)dxdy +\int_{\O}\varphi^2(x)\,dx -\int_{\O}a\varphi^2(x)\,dx,\\
&\ge -\nlp{\varphi}{2}{\O}\left(\int_{\O}\left( \int_{\O} J_\sigma(x-y)\varphi(x)\,dx\right)^2\,dy\right)^{1/2} +\int_{\O}\varphi^2(x)\,dx -\sup_{\O}a\int_{\O}\varphi^2(x)\,dx,\\
&\ge -\sqrt{\|J_\sigma\|_{\infty}}\nlp{\varphi}{2}{\O}^2+\int_{\O}\varphi^2(x)\,dx -\nu\int_{\O}\varphi^2(x)\,dx,\\
&\ge \left(-\frac{\sqrt{\|J\|_{\infty}}}{\sigma^{N/2}}+1-\nu \right) \int_{\O}\varphi^2(x)\,dx.
\end{align*}

Thus, for all $\sigma$ we have 
$$ \I_{\sigma,0}(\varphi)\ge \left(-\frac{\sqrt{\|J\|_{\infty}}}{\sigma^{N/2}}+1-\nu \right).$$
By density of $C_c(\O)$ in  $L^2(\O)$, the above inequality holds  for any $\varphi \in L^2(\O)$.

Therefore, by Theorem \ref{bcv-pev-thm2} for all $\sigma$
$$\lambda_p(\m_{\sigma,0,\O} +a)=\lambda_v(\m_{\sigma,0,\O} +a)\ge -\frac{\sqrt{\|J\|_{\infty}}}{\sigma^{N/2}}+ 1 -\nu,$$
 and 
 $$ \liminf_{\sigma\to +\infty} \lambda_p(\m_\sigma +a)\ge 1 -\nu.$$
 Hence,
 $$1 -\nu\le \liminf_{\sigma\to +\infty} \lambda_p(\m_{\sigma,0,\O} +a)\le \limsup_{\sigma\to +\infty} \lambda_p(\m_{\sigma,0,\O} +a) \le 1 -\nu.$$
\end{proof}

To conclude this subsection, we analyse the  monotonic behaviour of $\lambda_p(\m_{\sigma,0,\O}+a)$ with respect to $\sigma$ in the particular case  $\O=\R^N$. More precisely,

\begin{proposition}
Let $\O=\R^N,$ $a\in C(\R^N)$ and  $J\in C(\R^N)$ be positive, symmetric and such that $|z|^2J(z)\in L^1(\R^N)$. Assume further that $J$ satisfies \eqref{hyp1}--\eqref{hyp3}, $m=0$ and $a$ is  symmetric ($a(x)=a(-x)$ for all $x$) and the map $t\to a(tx)$ is non increasing for all $x,t>0$. Then the map $\sigma \to \lambda_p(\sigma)$ is monotone non decreasing. 
 \end{proposition}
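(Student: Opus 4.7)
The plan is to combine the scaling invariance of $\lambda_p$ (Proposition \ref{bcv-pev-prop-scal-eq}) with the monotonicity of $\lambda_p$ with respect to $a$ (part (ii) of Proposition \ref{bcv-prop-pev}) and the radial monotonicity assumption on $a$. The constant shift $-1/\sigma^m=-1$ (since $m=0$) plays no role in this argument, so I will work directly with $\m_{\sigma,0,\R^N}+a=\lb{\sigma,0,\R^N}-1+a$ and only keep track of how the rescaling affects the convolution kernel and the potential $a$.

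The key computation is the following. Given $K(x,y)=J_{\sigma}(x-y)$ on $\R^N$ and applying Proposition \ref{bcv-pev-prop-scal-eq} with scaling parameter $\tau>0$, note that $\R^N_\tau=\tau\R^N=\R^N$ and
\[
\frac{1}{\tau^N}K\!\left(\frac{x}{\tau},\frac{y}{\tau}\right)=\frac{1}{\tau^N\sigma^N}J\!\left(\frac{x-y}{\sigma\tau}\right)=J_{\sigma\tau}(x-y),
\]
so the rescaled convolution operator is precisely $\lb{\sigma\tau,0,\R^N}$, and the potential becomes $a_\tau(x):=a(x/\tau)$. Proposition \ref{bcv-pev-prop-scal-eq} therefore yields, for every $\sigma,\tau>0$,
\[
\lambda_p\bigl(\m_{\sigma,0,\R^N}+a\bigr)=\lambda_p\bigl(\m_{\sigma\tau,0,\R^N}+a_\tau\bigr).
\]

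Now fix $0<\sigma_1\le\sigma_2$ and apply the identity with $\sigma=\sigma_1$ and $\tau=\sigma_2/\sigma_1\ge 1$. The hypotheses on $a$ (symmetry plus $t\mapsto a(tx)$ non-increasing on $(0,\infty)$ for every $x$) imply that for $\tau\ge 1$ one has $a(x/\tau)\ge a(x)$ for all $x\in\R^N$: indeed $x/\tau$ lies on the segment joining $0$ to $x$, and the value of $a$ decreases as one moves outward along this ray from the origin (symmetry handles the sign of $x$). Hence $a_\tau\ge a$ pointwise, and part (ii) of Proposition \ref{bcv-prop-pev} gives
\[
\lambda_p\bigl(\m_{\sigma_2,0,\R^N}+a_\tau\bigr)\le \lambda_p\bigl(\m_{\sigma_2,0,\R^N}+a\bigr).
\]
Combining the two displays yields $\lambda_p(\m_{\sigma_1,0,\R^N}+a)\le\lambda_p(\m_{\sigma_2,0,\R^N}+a)$, which is the desired monotonicity.

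There is essentially no obstacle in this proof; the only point to verify with care is the direction of the inequality $a_\tau\ge a$ when $\tau\ge 1$, which is where the "$t\mapsto a(tx)$ non-increasing" assumption enters precisely. Everything else is an immediate application of the two general properties of $\lambda_p$ already established earlier in the paper (scaling invariance and monotonicity with respect to the zero-order term).
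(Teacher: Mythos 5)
Your proof is correct and follows essentially the same route as the paper: scaling invariance (Proposition \ref{bcv-pev-prop-scal-eq}) reduces the comparison of two values of $\sigma$ to a comparison of two potentials, and then monotonicity of $\lambda_p$ in the zero-order term (part (ii) of Proposition \ref{bcv-prop-pev}) concludes. If anything, your bookkeeping is cleaner: the paper normalises both operators to $\sigma=1$ and in doing so writes $a_\sigma(x)=a(x/\sigma)$ where it should be $a(\sigma x)$ (and cites (i) instead of (ii)), whereas your version, rescaling $\sigma_1$ up to $\sigma_2$ and comparing $a(\cdot/\tau)$ with $a$ for $\tau\ge 1$, gets the directions right.
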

\begin{proof}
When $\O=\R^N$, thanks to Proposition \ref{bcv-pev-prop-scal-eq}, we have 
$$\lambda_p(\m_{\sigma,0,\R^N}+a)=\lambda_p(\m_{1,0,\R^N} +a_\sigma(x)). $$
Since the function $a_\sigma(x)$ is monotone non increasing with respect to $\sigma$, by (i) of Proposition \ref{bcv-prop-pev}, for all $\sigma \ge \sigma^*$ we have 
 $$\lambda_p(\m_{\sigma^*,0,\R^N}+a) =\lambda_p(\m_{1,0,\R^N}+a_{\sigma^*}(x))\le\lambda_p(\M_{1,0,\R^N}+a_{\sigma}(x))=\lambda_p(\m_{\sigma,0,\R^N}+a).$$
\end{proof}

\subsubsection{The case $m=2$}
 Finally, let us study the case $m=2$ and end the proof of Theorem \ref{bcv-pev-thm4}. In this situation, we claim that 
 
 \begin{lemma}
Let $\O$ be a domain, $a\in C(\O)$ and let $J\in C(\R^N)$ be positive, symmetric and such that $|z|^2J(z)\in L^1(\R^N)$. Assume further that $J$ satisfies \eqref{hyp1}--\eqref{hyp3} , $a \in C^{0,\alpha}(\O)$ with $\alpha>0$ and $m=2$ then 

\begin{align}
& \lim_{\sigma \to +\infty}\lambda_p(\m_{\sigma,2,\O} +a)=-\sup_{\O} a, \nonumber\\
& \lim_{\sigma \to 0} \lambda_p(\m_{\sigma,2,\O} +a)=\lambda_1\left(\frac{D_2(J)K_{2,N}}{2}\Delta +a,\O\right), \label{bcv-pev-lim-m-2-0}
\end{align} 
where $$K_{2,N}:=\frac{1}{|S^{N-1}|}\int_{S^{N-1}}(s.e_1)^2\,ds= \frac{1}{N}$$

and $$\lambda_1\left(K_{2,N}D_2(J)\Delta +a,\O\right):=\inf_{\varphi \in H^1_0(\O),\varphi\not \equiv 0} K_{2,N}\j(\varphi) -\a(\varphi). $$
\end{lemma}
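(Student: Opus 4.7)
The claim splits into two limits. The limit at $\sigma\to+\infty$ follows verbatim from Remark \ref{bcv-pev-rem-asymp}: the constant test function $\varphi\equiv 1$ in the definition of $\lambda_p$ together with part (iv) of Proposition \ref{bcv-prop-pev} sandwiches
\[
-\nu\;\le\;\lambda_p(\m_{\sigma,2,\O}+a)\;\le\;-\sup_{\O}\!\left(a-\tfrac{1}{\sigma^2}\right),
\]
and both bounds converge to $-\nu:=-\sup_{\O}a$ since $1/\sigma^2\to 0$.

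For $\sigma\to 0$, the plan is to pass to the variational characterization. Rewriting $\m_{\sigma,2,\O}+a=\lb{\sigma,2,\O}+(a-1/\sigma^2)$, the kernel $K_\sigma$ of $\lb{\sigma,2,\O}$ is symmetric and its total mass $p_\sigma(x)\le\sigma^{-2}$ is bounded at each fixed $\sigma$, so Theorem \ref{bcv-pev-thm2} yields $\lambda_p=\lambda_v$. Using the symmetry of $J_\sigma$, a direct manipulation produces the decomposition
\[
-\langle \m_{\sigma,2,\O}\varphi,\varphi\rangle\;=\;\underbrace{\tfrac{1}{2\sigma^2}\iint_{\O\times\O}J_\sigma(x-y)\bigl(\varphi(x)-\varphi(y)\bigr)^2\,dx\,dy}_{=:E_\sigma(\varphi)}\;+\;\underbrace{\tfrac{1}{\sigma^2}\int_\O\varphi^2(x)(1-p_\sigma(x))\,dx}_{=:B_\sigma(\varphi)},
\]
both terms being non-negative, so that $\lambda_v(\m_{\sigma,2,\O}+a)=\inf_{\varphi\in L^2(\O)\setminus\{0\}}\bigl[E_\sigma(\varphi)+B_\sigma(\varphi)-\int_\O a\varphi^2\bigr]/\|\varphi\|_{L^2}^2$. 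Note that Lemma \ref{bcv-lem-I-le-J} alone would only give the upper bound $\j(\varphi)-\a(\varphi)$, which is too loose by the factor $N$; the sharper Taylor analysis below is required to recover $K_{2,N}=1/N$.

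For the upper bound $\limsup_{\sigma\to 0}\lambda_v\le\lambda_1$, I would test against $\varphi\in C_c^\infty(\O)$. For $\sigma$ small enough that $\mathrm{supp}\,\varphi\subset\{p_\sigma=1\}$, the penalty $B_\sigma(\varphi)$ vanishes. A second-order Taylor expansion $\varphi(x+z)-\varphi(x)=z\cdot\nabla\varphi(x)+O(|z|^2)$, combined with the isotropy identity $\int_{\R^N}J(w)w_iw_j\,dw=D_2(J)\delta_{ij}/N$ (which is precisely where $K_{2,N}$ enters), yields $E_\sigma(\varphi)/\|\varphi\|_{L^2}^2\to K_{2,N}\j(\varphi)$ as $\sigma\to 0$. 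Thus $\limsup\lambda_v\le K_{2,N}\j(\varphi)-\a(\varphi)$, and density of $C_c^\infty(\O)$ in $H_0^1(\O)$ closes the bound.

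The lower bound $\liminf_{\sigma\to 0}\lambda_v\ge\lambda_1$ is the main obstacle. The plan is to pick near-minimizers $\varphi_\sigma\in L^2(\O)$ with $\|\varphi_\sigma\|_{L^2}=1$ and $\I_{\sigma,2}(\varphi_\sigma)\le\lambda_v(\m_{\sigma,2,\O}+a)+\sigma$. The upper bound already established, together with $a\in L^\infty$, forces $E_\sigma(\varphi_\sigma)+B_\sigma(\varphi_\sigma)\le C$ uniformly in $\sigma$. I would then invoke a Bourgain--Brezis--Mironescu--Ponce type compactness and $\Gamma$-$\liminf$ principle: along a subsequence $\varphi_\sigma\to\varphi$ strongly in $L^2_{\mathrm{loc}}(\O)$, the vanishing of $B_\sigma$ forces the zero-extension of $\varphi$ to lie in $H_0^1(\O)$, and
\[
\liminf_{\sigma\to 0}E_\sigma(\varphi_\sigma)\;\ge\;\tfrac{D_2(J)}{2N}\int_\O|\nabla\varphi|^2\,dx.
\]
Combined with $\int_\O a\varphi_\sigma^2\,dx\to\int_\O a\varphi^2\,dx$ (using $a\in L^\infty\cap C^{0,\alpha}$), this gives $\liminf\lambda_v\ge K_{2,N}\j(\varphi)-\a(\varphi)\ge\lambda_1$. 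The delicate technical point is tightness: when $\O$ is unbounded, one must prevent mass from escaping to infinity so that $\|\varphi\|_{L^2}=1$ is preserved in the limit. I would handle this either by truncating $\O$ to the bounded subdomains $\O\cap B_n$, applying the BBM compactness there, and passing to the limit via Lemma \ref{bcv-lem-lim}, or, if $\|\varphi\|_{L^2}<1$ survives in the limit, by arguing with the renormalization $\varphi/\|\varphi\|_{L^2}$ as a valid competitor in the variational formula for $\lambda_1$.
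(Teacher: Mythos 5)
The $\sigma\to+\infty$ limit and the upper bound $\limsup_{\sigma\to 0}\lambda_p(\m_{\sigma,2,\O}+a)\le\lambda_1$ are correct and essentially identical to the paper's argument (the paper invokes the Bourgain--Brezis--Mironescu characterisation where you do a direct Taylor expansion on $C_c^\infty$ test functions, but the content is the same, and your observation that Lemma \ref{bcv-lem-I-le-J} is off by the factor $N$ is accurate).

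The lower bound is where your route diverges from the paper's and where there is a genuine gap. You propose the direct method: take near-minimizers $\varphi_\sigma$ of the Rayleigh quotient, extract an $L^2_{loc}$ limit by BBM compactness, and use a $\Gamma$-$\liminf$ inequality. In an unbounded domain this founders exactly on the tightness issue you flag, and neither of your proposed fixes closes it. Truncating to $\O_n:=\O\cap B_n$ gives $\liminf_{\sigma\to 0}\lambda_v(\m_{\sigma,2,\O_n}+a)\ge\lambda_1(\cdot,\O_n)$, but domain monotonicity reads $\lambda_v(\m_{\sigma,2,\O}+a)\le\lambda_v(\m_{\sigma,2,\O_n}+a)$, so this is an inequality in the wrong direction for bounding $\lambda_v$ on the full domain from below; Lemma \ref{bcv-lem-lim} concerns a fixed operator and does not justify exchanging the limits in $n$ and $\sigma$. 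The renormalization fix fails outright if all mass escapes ($\varphi\equiv 0$), and if only part escapes you would need a full concentration--compactness splitting (including the bound $\lambda_1\le-\sup_\O a$ for the profile at infinity and control of cross terms), none of which is supplied. Note also that your argument never uses $a\in C^{0,\alpha}$, which should be a warning sign. The paper avoids all of this by going in the opposite direction: using $\lambda_p=\lambda_p''$ it produces, for each $\sigma$, a test function $\psi_\sigma\in C_c(\O)$ with $\opmem{\psi_\sigma}{2,\O}+(a+\lambda_p+\delta)\psi_\sigma\ge 0$, mollifies it to $\varphi_\sigma\in C_c^\infty(\O)$ (this is where the H\"older continuity of $a$ is needed, to absorb the commutator with the mollifier into an extra $\delta$), and then proves a pointwise Taylor inequality bounding the \emph{local} Dirichlet energy by $(1+\theta)$ times the \emph{nonlocal} one plus a remainder $R(\sigma)=O(\|\nabla^2\eta_\tau\|_{L^1}\sigma^2)$. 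Since each $\varphi_\sigma$ is an admissible competitor for $\lambda_1$, one gets $\lambda_1\le(1+\theta)[\lambda_p(\m_{\sigma,2,\O}+a)+2\delta]+\theta\|a\|_{L^\infty}+R(\sigma)$ for every fixed $\sigma$, and no limit function, compactness, or tightness is ever required. As written, your lower bound is not a proof; either adopt the paper's test-function construction or supply the concentration--compactness argument in full.
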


\begin{proof}
 In this situation, as already noticed in Remark \ref{bcv-pev-rem-asymp}, by following the arguments used  in the case $2>m>0$,  we can obtain the limit of $\lambda_p(\m_{\sigma,2,\O} +a)$ as $\sigma \to \infty$.  So, it  remains to prove \eqref{bcv-pev-lim-m-2-0}.

Let us rewrite $\I_{\sigma,2}(\varphi)$ in a more convenient way. Let $\rho_\sigma(z):=\frac{1}{\sigma^2D_{2}(J)}J_\sigma(z)|z|^2$, then for $\varphi\in H^1_0(\O)$, we have

\begin{align}
\I_{\sigma,2}(\varphi)&=\frac{1}{\nlp{\varphi}{2}{\O}^2}\left(\frac{1}{2\sigma^2}\iint_{\O\times \O}J_\sigma(x-y)(\varphi(x)-\varphi(y))^2\,dxdy\right)-\r_\sigma(\varphi)-\a(\varphi),\\
&=\frac{1}{\nlp{\varphi}{2}{\O}^2}\left(\frac{D_2(J)}{2}\iint_{\O\times \O}\rho_\sigma(x-y)\frac{(\varphi(x)-\varphi(y))^2}{|x-y|^2}\,dxdy\right)-\r_\sigma(\varphi) -\a(\varphi).
\end{align}

We are now is position to prove \eqref{bcv-pev-lim-m-2-0}. Let us first show that 
\begin{equation}\label{bcv-pev-lim-m-2-0-+}
\limsup_{\sigma\to 0}\lambda_p(\m_{\sigma,2,\O}+a)\le \lambda_1\left(\frac{K_{2,N}D_2(J)}{2}\Delta +a,\O\right).
\end{equation}

This inequality  follows from the two following observations.

First, for any $\o  \subset \O$  compact subset of $\O$,  we have for $\sigma$ small enough 
$$p_\sigma(x)=\int_{\O}J_\sigma(x-y)\,dy=1 \quad \text{ for all }\quad x \in \o.$$
Therefore,  for $\varphi \in C_c^{\infty}(\O)$ and $\sigma$ small enough,   
\begin{equation} \label{bcv-pev-eq-asymp-m-2-2}
\r_{\sigma,2}(\varphi)=\frac{1}{\sigma^{2}\nlp{\varphi}{2}{\O}^2}\int_{\O} (p_\sigma(x)-1)\varphi^2(x) \,dx=0.
\end{equation}
Secondly, by definition, $\rho_\sigma$ is a continuous mollifier such that
$$\begin{cases}
\rho_\sigma\ge 0\quad \text{ in }\quad \R^N,\\
\int_{\R^N}\rho_\sigma(z)dz=1, \quad \forall \, \sigma >0,\\
\lim_{\sigma\to 0}\int_{|z|\ge \delta}\rho_\sigma(z)dz=0, \quad \forall \, \delta >0, 
\end{cases}
$$
which, from the characterisation of Sobolev spaces in \cite{Bourgain2001,Brezis2002,Ponce2004},  enforces that 
\begin{equation} \label{bcv-pev-eq-asymp-m-2-1}
\lim_{\sigma\to 0} \iint_{\O\times \O}\rho_\sigma(x-y)\frac{(\varphi(x)-\varphi(y))^2}{|x-y|^2}\,dxdy =K_{2,N}\nlp{\nabla\varphi}{2}{\O}^2, \quad \text{for any } \quad  \varphi \in H^1_0(\O).
\end{equation}

Thus, for any $\varphi \in C^{\infty}_c(\O)$
$$\limsup_{\sigma\to 0}\lambda_p(\m_{\sigma,2,\O}+a)\le\lim_{\sigma\to 0}\I_{\sigma,2}(\varphi)  =K_{2,N}\j(\varphi)-\a(\varphi).$$

From the above inequality, by  definition of $\lambda_1\left(\frac{K_{2,N}D_2(J)}{2}\Delta +a,\O\right)$, it is then standard to obtain  
$$\limsup_{\sigma\to 0}\lambda_p(\m_{\sigma,2,\O}+a)\le \lambda_1\left(\frac{K_{2,N}D_2(J)}{2}\Delta +a,\O\right).$$

To complete our proof, it remains to establish the following inequality

$$\lambda_1\left(\frac{K_{2,N}D_2(J)}{2}\Delta +a,\O\right)\le \liminf_{\sigma\to 0}\lambda_p(\m_{\sigma,2,\O}+a).$$

Observe that to obtain  the above inequality, it is sufficient to prove that 

 \begin{equation}\label{bcv-pev-eq-m2-eps0-liminf}
 \lambda_1\left(\frac{K_{2,N}D_2(J)}{2}\Delta +a,\O\right)\le \liminf_{\sigma\to 0}\lambda_p(\m_{\sigma,2,\O}+a)+2\delta \quad \text{for all }\quad \delta>0.
 \end{equation}
  
Let us fix $\delta>0$. Now, to obtain \eqref{bcv-pev-eq-m2-eps0-liminf}, we construct adequate smooth test functions $\varphi_\sigma$ and estimate $K_{2,N}\j(\varphi_\sigma)-\a(\varphi_\sigma)$ in terms of $\lambda_p(\m_{\sigma,2,\O}+a),\delta$ and some reminder $R(\sigma)$ that converges to $0$ as $\sigma \to 0$. Since our argument is rather long, we decompose it into three steps.
\subsection*{Step One: Construction of a good the test function}

We first claim that, for all $\sigma>0$, there exists $\varphi_\sigma\in C_c^{\infty}(\O)$ such that 
$$\opmem{\varphi_\sigma}{2,\O}(x) +(a(x)+\lambda_p(\m_{\sigma,2,\O}+a)+2\delta)\varphi_\sigma(x)\ge 0 \quad \text{ for all }\quad x \in \O.$$
 Indeed, by Theorem \ref{bcv-pev-thm2}, we have $\lambda_p(\m_{\sigma,2,\O}+a)=\lambda_p''(\m_{\sigma,2,\O}+a)$, therefore  for all $\sigma$, there exists $\psi_\sigma\in C_c(\O)$ such that 
$$\opmem{\psi_\sigma}{2,\O}(x) +(a(x)+\lambda_p(\m_{\sigma,2,\O}+a)+\delta)\psi_\sigma(x)\ge 0 \quad \text{ for all }\quad x \in \O.$$
Since $\psi_\sigma \in C_c(\O)$, we can easily check that 
$$\opmem{\psi_\sigma}{2,\R^N}(x) +(a(x)+\lambda_p(\m_{\sigma,2,\O}+a)+\delta)\psi_\sigma(x)\ge 0 \quad \text{ for all }\quad x \in \R^N.$$
Now, let $\eta$ be a smooth  mollifier of unit mass and with support in the unit ball and consider  $\eta_\tau:=\frac{1}{\tau^N}\eta\left(\frac{z}{\tau}\right)$ for $\tau>0$. 

 By taking $\tilde\varphi_\sigma:= \eta_\tau\star \psi_\sigma$ and  observing that $\opmem{\tilde\varphi_\sigma}{2,\R^N}(x)=\eta_{\tau}\star(\opmem{\psi_\sigma}{2,\R^N})(x) $ for any $x\in \R^N$, we deduce that 

   \begin{align*}
   &\eta_\tau \star\left(\opmem{\psi_\sigma}{2,\R^N} +(a(x)+\lambda_p(\m_{\sigma,2,\O}+a)+\delta)\psi_\sigma\right)\ge 0 \quad \text{ for all }\quad x \in \R^N, \\
   & \opmem{\tilde\varphi_\sigma}{2,\R^N}(x) +(\lambda_p(\m_{\sigma,2,\O}+a)+\delta)\tilde\varphi_\sigma(x) +\eta_\tau\star (a\psi_\sigma)(x)\ge 0 \quad \text{ for all }\quad x \in \R^N.
   \end{align*}
  By adding and subtracting $a$,  we then have,  for all $x\in \R^N$,
  $$
    \opmem{\tilde\varphi_\sigma}{2,\R^N}(x) +(a(x)+\lambda_p(\m_{\sigma,2,\O}+a)+\delta)\tilde\varphi_\sigma(x) + \int_{\R^N}\eta_\tau(x-y)\psi_\sigma(y)(a(y)-a(x))\,dy\ge 0.
   $$
  For $\tau$ small enough, say $\tau \le \tau_0$, the function   $\tilde\varphi_\sigma\in C^{\infty}_{c}(\O)$ and for all $x\in \O$ we have 
 \begin{align*}
 \opmem{\tilde\varphi_\sigma}{2,\R^N}(x)&=\frac{1}{\sigma^2}\left(\int_{\R^N}J_\sigma(x-y)\tilde\varphi_\sigma(y)\,dy-\tilde\varphi_\sigma(x)\right),\\
 &=\frac{1}{\sigma^2}\left(\int_{\O}J_\sigma(x-y)\tilde\varphi_\sigma(y)\,dy-\tilde\varphi_\sigma(x)\right)= \opmem{\tilde\varphi_\sigma}{2,\O}(x).
 \end{align*}  
   Thus, from the above inequalities, for $\tau \le \tau_0$, we get for all $ x \in \O,$
   $$  \opmem{\tilde\varphi_\sigma}{2,\O}(x) +(a(x)+\lambda_p(\m_{\sigma,2,\O}+a)+\delta)\tilde\varphi_\sigma(x) + \int_{\R^N}\eta_\tau(x-y)\psi_\sigma(y)(a(y)-a(x))\,dy\ge 0.$$
   Since $a$ is H\"older continuous, we can estimate the integral  by  
   \begin{align*}
   \left|\int_{\R^N}\eta_\tau(x-y)\psi_\sigma(y)(a(y)-a(x))\,dy \right|&\le \int_{\R^N}\eta_\tau(x-y)\psi_\sigma(y)\left|\frac{a(y)-a(x)}{|y-x|^{\alpha}}\right||x-y|^{\alpha}\,dy,\\
&\le \kappa\tau^{\alpha}\tilde\varphi_\sigma(x),
   \end{align*}
   where $\kappa$ is the H\"older semi-norm of $a$.
   Thus, for $\tau$  small, says $\tau\le \inf\{\left(\frac{\delta}{2 \kappa}\right)^{1/\alpha},\tau_0\}$, we have
     \begin{equation}\label{bcv-eq-lp-eps2-1}
   \opmem{\tilde\varphi_\sigma}{2,\O}(x) +(a(x)+\lambda_p(\m_{\sigma,2,\O}+a)+2\delta)\tilde\varphi_\sigma(x)\ge 0 \quad \text{ for all } \quad x\in \O.
   \end{equation}
   
Let us consider now $\varphi_\sigma:=\gamma \tilde\varphi_\sigma$, where $\gamma$ is a positive constant to be chosen.  From \eqref{bcv-eq-lp-eps2-1}, we obviously have 
 \begin{equation}\label{bcv-eq-lp-eps2-1bis}
   \opmem{\varphi_\sigma}{2,\O}(x) +(a(x)+\lambda_p(\m_{\sigma,2,\O}+a)+2\delta)\varphi_\sigma(x)\ge 0 \quad \text{ for all } \quad x\in \O.
   \end{equation}

By taking $\gamma:=\frac{\int_{\R^N}\psi_\sigma^2(x)\,dx}{\int_{\R^N}\tilde\varphi_\sigma^2(x)\,dx}$, we get 
\begin{equation} \label{bcv-eq-lp-eps2-norm}
\frac{\int_{\R^N}\psi_\sigma^2(x)\,dx}{\int_{\R^N}\varphi_\sigma^2(x)\,dx}=1.     
    \end{equation}
   
\subsection*{Step Two: A first estimate of $\lambda_1$}  
 Now, by multiplying $\opmem{\varphi_\sigma}{2,\O}$  by $-\varphi_\sigma$ and integrating  over $\O$,  we then get

\begin{align}
    -\int_{\O}\opmem{\varphi_\sigma}{2,\O}\varphi_\sigma(x)\,dx&= -\iint_{\R^N\times\R^N}\frac{1}{\sigma^2}J_\sigma(x-y)(\varphi_\sigma(y)-\varphi_\sigma(x))\varphi_\sigma(x)\,dydx,\\
    &=\frac{1}{2\sigma^2}\iint_{\R^N\times\R^N}J_\sigma(x-y)(\varphi_\sigma(y)-\varphi_\sigma(x))^2\,dxdy,   \\
    &=\frac{D_2(J)}{2}\iint_{\R^N\times\R^N}\rho_\sigma(z)\frac{(\varphi_\sigma(x+z)-\varphi_\sigma(x))^2}{|z|^2}\,dzdx.\label{bcv-eq-lp-eps2-2}
    \end{align}

    By combining \eqref{bcv-eq-lp-eps2-1} and \eqref{bcv-eq-lp-eps2-2} we therefore obtain  
    \begin{multline}\label{bcv-eq-lp-eps2-3}
   \frac{D_2(J)}{2}\iint_{\R^N\times\R^N}\rho_\sigma(z)\frac{(\varphi_\sigma(x+z)-\varphi_\sigma(x))^2}{|z|^2}\,dzdx-\int_{\R^N}a(x)\varphi_\sigma^2(x)\,dx  \\ \le (\lambda_p(\m_{\sigma,2,\O}+a)+2\delta)\int_{\R^N}\varphi_\sigma^2(x)\,dx.    
    \end{multline}

  On the other hand, inspired by the proof of Theorem 2  in \cite{Brezis2002}, since $\varphi_\sigma \in C^{\infty}_c(\R^N)$,  by Taylor's expansion, for all $x,z \in \R^N$,  we have
$$|\varphi_\sigma(x+z)-\varphi_\sigma(x) -z\cdot\nabla \varphi_\sigma(x)|\le \sum_{i,j}|z_iz_j|\int_0^1 t\left(\int_0^1|\partial_{ij}\varphi_\sigma(x+tsz)|\,ds \right)\,dt.$$

  Therefore, 
  $$ |z\cdot\nabla \varphi_\sigma(x)|\le \sum_{i,j}|z_iz_j|\int_0^1 t\left(\int_0^1|\partial_{ij}\varphi_\sigma(x+tsz)|\,ds \right)\,dt+ |\varphi_\sigma(x+z)-\varphi_\sigma(x)|,$$
  and for every  $\theta> 0$ we have 
  
  \begin{align*} 
  |z\cdot\nabla \varphi_\sigma(x)|^2&\le C_{\theta}\left[\sum_{i,j}|z_iz_j|\int_0^1 t\left(\int_0^1|\partial_{ij}\varphi_\sigma(x+tsz)|\,ds \right)\,dt\right]^2+ (1+\theta)|\varphi_\sigma(x+z)-\varphi_\sigma(x)|^2,\\
  & \le C_{\theta}\sum_{i,j}|z_iz_j|^2\iint_{[0,1]^2} t^2|\partial_{ij}\varphi_\sigma(x+tsz)|^2\,ds dt+ (1+\theta)|\varphi_\sigma(x+z)-\varphi_\sigma(x)|^2.
  \end{align*}

Thus, by integrating in $x$ and $z$ over $\R^{N}\times \R^N$, we get 
\begin{equation*}
\begin{split} 
\iint \frac{\rho_\sigma(|z|)}{|z|^2}|z\cdot\nabla \varphi_\sigma(x)|^2\,dzdx \le  C_{\theta} \iint \rho_\sigma(|z|)\sum_{i,j}\frac{|z_iz_j|^2}{|z|^2}\left(\iint_{[0,1]^2} t^2|\partial_{ij}\varphi_\sigma(x+tsz)|^2\,ds dt\right)\,dzdx 
\\+ (1+\theta)\iint\rho_\sigma(|z|)\frac{|\varphi_\sigma(x+z)-\varphi_\sigma(x)|^2}{|z|^2}\,dzdx. 
\end{split}
\end{equation*}  
For $\sigma$ small,  $supp(\rho_\sigma)\subset B_1(0)$, and   we have for all $x \in \R^N$,
 $$\int_{\R^N}\frac{\rho_\sigma(|z|)}{|z|^2}|z\cdot\nabla \varphi_\sigma(x)|^2\,dz= K_{2,N}|\nabla \varphi_\sigma(x)|^2,$$
 
  whence, 
  \begin{equation}
  \begin{split}
 K_{2,N}\int_{\R^N}|\nabla \varphi_\sigma(x)|^2\, dx \le   C_{\theta} \iint\rho_\sigma(|z|)\sum_{i,j}\frac{|z_iz_j|^2}{|z|^2}\left(\iint_{[0,1]^2} t^2|\partial_{ij}\varphi_\sigma(x+tsz)|^2\,ds dt\right)\,dzdx 
\\+ (1+\theta) \iint\rho_\sigma(|z|)\frac{|\varphi_\sigma(x+z)-\varphi_\sigma(x)|^2}{|z|^2}\,dzdx. \label{bcv-eq-lp-eps2-4}
\end{split}
 \end{equation}

 Dividing \eqref{bcv-eq-lp-eps2-4} by $\nlp{\varphi_\sigma}{2}{\O}^2$ and then subtracting $\a(\varphi_\sigma)$ on both side, we get  

\begin{equation}
 K_{2,N}\j(\varphi_\sigma)-\a(\varphi_\sigma)\le  R(\sigma) +(1+\theta) \I_{\sigma,2}(\varphi_\sigma)+\theta\a(\varphi_\sigma), \label{bcv-eq-lp-eps2-5}
 \end{equation} 

where $R(\sigma)$ is defined by $$ R(\sigma):=\frac{C_{\theta}}{\nlp{\varphi_\sigma}{2}{\O}^2} \iint\rho_\sigma(|z|)\sum_{i,j}\frac{|z_iz_j|^2}{|z|^2}\left(\iint_{[0,1]^2} t^2|\partial_{ij}\varphi_\sigma(x+tsz)|^2\,dsdt\right)\,dzdx. $$ 
 
 By combining now \eqref{bcv-eq-lp-eps2-5} with \eqref{bcv-eq-lp-eps2-3},  by definition of $\lambda_1\left(\frac{K_{2,N}D_2(J)}{2}\Delta +a,\O\right)$, we obtain

\begin{equation}
 \lambda_1\left(\frac{K_{2,N}D_2(J)}{2}\Delta +a,\O\right)\le R(\sigma) + (1+\theta)[\lambda_p(\M_{\sigma,2,\O}+a)+2\delta]+\theta\nli{a}{\O}.   \label{bcv-eq-lp-eps2-6}
 \end{equation}  
 
\subsection*{Step Three: Estimates of $R(\sigma)$ and conclusion}
 
 Let us now estimate $R(\sigma)$ and finish our argument. 
 
 By construction, we have $\partial_{ij}\varphi_\sigma=\partial_{ij}\eta_\tau \star \psi_\sigma$. So, by Fubini's Theorem and standard convolution estimates, we get for $\sigma$ small

\begin{align*}
R(\sigma)
&\le \sum_{i,j} \int_{|z|\le 1}\iint_{[0,1]^2} \rho_\sigma(|z|)\frac{|z_iz_j|^2}{|z|^2} t^2\left(\int_{\R^N}|\partial_{ij}\eta_\tau \star \psi_\sigma(x+tsz)|^2\,dx\right)dtdsdz,\\
&\le \left(\int_{|z|\le 1}\int_{[0,1]} \rho_\sigma(|z|)\sum_{i,j}\frac{|z_iz_j|^2}{|z|^2} t^2\,dtdz\right) \nlp{\nabla^2\eta_\tau}{1}{\R^N}\nlp{\psi_\sigma}{2}{\R^N}^2, \\
&\le \frac{2}{3}\nlp{\nabla^2\eta_\tau}{1}{\R^N}\nlp{\psi_\sigma}{2}{\R^N}^2 \int_{|z|\le 1}\rho_\sigma(|z|)|z|^2\,dz.
 \end{align*}

  Combining this inequality with \eqref{bcv-eq-lp-eps2-6}, we get   
  
  \begin{multline*}
 \lambda_1\left(\frac{K_{2,N}D_2(J)}{2}\Delta +a,\O\right)\le (1+\theta)[\lambda_p(\M_{\sigma,2,\O}+a)+2\delta]+\theta \nli{a}{\O} \\+
   \frac{2C_{\theta}}{3}\nlp{\nabla^2\eta_\tau}{1}{\R^N}\frac{\nlp{\psi_\sigma}{2}{\R^N}^2 }{\nlp{\varphi_\sigma}{2}{\O}^2} \int_{|z|\le 1}\rho_\sigma(|z|)|z|^2dz.   
 \end{multline*} 
  Since $\varphi_\sigma \in C_c^{\infty}(\O)$, $\nlp{\varphi_\sigma}{2}{\O}^2=\nlp{\varphi_\sigma}{2}{\R^N}^2$ and  thanks to  \eqref{bcv-eq-lp-eps2-norm}, the above inequality reduces to  
  
  \begin{multline}
 \lambda_1\left(\frac{K_{2,N}D_2(J)}{2}\Delta +a,\O\right)\le (1+\theta)[\lambda_p(\M_{\sigma,2,\O}+a)+2\delta]+\theta \nli{a}{\O} \\+
   \frac{2C_{\theta}}{3}\nlp{\nabla^2\eta_\tau}{1}{\R^N} \int_{|z|\le 1}\rho_\sigma(|z|)|z|^2dz.    \label{bcv-eq-lp-eps2-7}
 \end{multline} 
  
 Now,   since $\int_{|z|\le 1}\rho_\sigma(|z|)|z|^2dz\le \sigma^2$, letting $\sigma \to 0$ in \eqref{bcv-eq-lp-eps2-7} yields
  
    \begin{equation}
 \lambda_1\left(\frac{K_{2,N}D_2(J)}{2}\Delta +a,\O\right)\le (1+\theta)[2\delta+\liminf_{\sigma \to 0}\lambda_p(\M_{\sigma,2,\O}+a)]+\theta \nli{a}{\O}.   \label{bcv-eq-lp-eps2-8}
 \end{equation}

Since \eqref{bcv-eq-lp-eps2-8} holds for every $\theta$, we obtain

    \begin{equation*}
 \lambda_1\left(\frac{K_{2,N}D_2(J)}{2}\Delta +a,\O\right)\le \liminf_{\sigma \to 0}\lambda_p(\M_{\sigma,2,\O}+a) +2\delta.   \label{bcv-eq-lp-eps2-9}
 \end{equation*}

\end{proof}

\section{Asymptotics of $\varphi_{p,\sigma}$}

In this last section, we investigate  the existence of a positive continuous eigenfunction  $\varphi_{p,\sigma}$ associated to the principal eigenvalue $ \lambda_p(\M_{\sigma,2,\O}+a)$.

The existence of such a $\varphi_{p,\sigma}$ is a straightforward consequence of the existence criteria in bounded domain (Theorem \ref{bcv-pev-th-crit2}) and the asymptotic behaviour of the principal eigenvalue (Theorem \ref{bcv-pev-thm4}).

Indeed, assume first that  $\O$ is bounded, then since $a \in L^{\infty}(\bar \O)$, there exists  $\sigma_0$ such that for all  $\sigma\le \sigma_0$, 
$$ \frac{1}{\sigma^2}-\sup_{\O}a> 1+\left|\lambda_1\left(\frac{K_{2,N}D_2(J)}{2}\Delta +a, \O\right)\right|.$$ 

Now,  thanks to $\lambda_p(\m_{\sigma,2,\O}+a)\to \lambda_1\left(\frac{K_{2,N}D_2(J)}{2}\Delta +a, \O\right)$,  for $\sigma$ small enough, says $\sigma  \le \sigma_1$, we get
$$\lambda_p(\m_{\sigma,2,\O}+a)\le 1+\left|\lambda_1\left(\frac{K_{2,N}D_2(J)}{2}\Delta +a, \O\right)\right|.$$

Thus, for $\sigma \le \inf\{\sigma_1,\sigma_0\}$, 
$$\lambda_p(\m_{\sigma,2,\O}+a)< \frac{1}{\sigma^2}-\sup_{\O}a,$$
which, thanks to Theorem \ref{bcv-pev-th-crit2}, enforces the existence of a principal positive continuous eigenfunction $\varphi_{p,\sigma}$ associated with $ \lambda_p(\m_{\sigma,2,\O}+a)$.

From the above argument, we can easily obtain the existence of eigenfunction when  $\O$ is unbounded. Indeed, let $\O_0$ be a bounded sub-domain of $\O$ and let $\gamma:=\sup\{\left|\lambda_1\left(\O_0\right)\right|;\left|\lambda_1\left(\O\right)\right| \}.$ Since $a$ is bounded in $\O$, there exists $\sigma_0$ such that
for all  $\sigma\le \sigma_0$, 
$$ \frac{1}{\sigma^2}-\sup_{\O}a> 2+\gamma.$$ 
As above, since $\lambda_p(\m_{\sigma,2,\O_0}+a)\to \lambda_1(\O_0)$, there exists $\sigma_1$ such that for all $\sigma\le \sigma_1$ we have 
$$\lambda_p(\m_{\sigma,2,\O_0}+a)\le 1+\gamma.$$ 

For any bounded domain $\O'$ such that $\O_0\subset \O'\subset \O$, by  monotonicity of $ \lambda_p(\m_{\sigma,2,\O'}+a)$  with respect to $\O'$, for all $\sigma \le \sigma_1$ we have 
 $$\lambda_p(\m_{\sigma,2,\O'}+a)\le 1+\gamma.$$ 
Therefore, for all $\sigma \le \sigma_2:=\inf\{\sigma_0,\sigma_1\}$, we have 
 
$$\lambda_p(\m_{\sigma,2,\O'}+a)+1\le  \frac{1}{\sigma^2}-\sup_{\O'}a,$$ 
and thus, thanks to Theorem \ref{bcv-pev-th-crit2}, for all $ \sigma\le \sigma_2$ there exists  $\varphi_{p,\sigma}$ associated to $\lambda_p(\m_{\sigma,2,\O'}+a)$.  

To construct a positive eigenfunction $\varphi_{p,\sigma}$ associated to $\lambda_p(\m_{\sigma,2,\O}+a)$, we then argue as follows.
 
Let $(\O_n)_{n\in \N}$ be an increasing sequence of bounded sub-domain of $\O$ that converges to $\O$. Then, for all $\sigma\le \sigma_2$, for each $n$ there exists a continuous positive function $\varphi_{n,\sigma}$ associated to $\lambda_p(\m_{\sigma,2,\O_n}+a)$. Without any loss of generality, we can assume that $\varphi_n$ is normalised by $\varphi_n(x_0)=1$ for some fixed $x_0\in  \O_0$. Since for all $n, \lambda_p(\m_{\sigma,2,\O_n}+a)+1\le  \frac{1}{\sigma^2}-\sup_{\O_n}a$, the Harnack inequality applies to $\varphi_n$ and thus the sequence $(\varphi_n)_{n\in \N}$ is locally uniformly bounded in $C^0$ topology. By a standard diagonal argument, there exists a subsequence, still denoted $(\varphi)_{n\in \N}$, that converges point-wise to some non-negative function $\varphi$. Thanks to the Harnack inequality, $\varphi$ is positive. Passing to the limit in the equation satisfied by $\varphi_n$, thanks to the Lebesgue dominated convergence Theorem, $\varphi$ satisfies 
$$\opmem{\varphi}{2,\O}(x)+(a(x)+\lambda_{p,\sigma}(\m_{\sigma,2,\O}+a)) \varphi(x)=0 \quad \text{ for all }\quad x \in \O.$$
Since $a$ is continuous and $((a(x)+\lambda_{p,\sigma}(\m_{\sigma,2,\O}+a))-\frac{1}{\sigma^2})<0$, we deduce that $\varphi$ is also continuous. Hence, $\varphi$ is a positive continuous eigenfunction associated with $\lambda_{p,\sigma}(\m_{\sigma,2,\O}+a)$.

\begin{remark}
We observe that such  arguments hold also for the operators $\M_{\sigma,m,\O}+a$ with $0<m<2$, since in such cases, $\lambda_p(\sigma)<+\infty$ for all $\sigma$ and $-\sup_{\O}(-\frac{1}{\sigma^2}+a)\to +\infty$. Thus, when $0<m<2$, for $\sigma(m)$ small enough, there exists always a positive function $\varphi_{p,\sigma}\in C(\bar\O)$ associated with   $ \lambda_p(\m_{\sigma,m,\O}+a)$.
\end{remark}

Finally, let us complete the proof of Theorem \ref{bcv-pev-thm5} by obtaining  the asymptotic behaviour of $\varphi_{p,\sigma}$ when $\sigma \to 0$ assuming that $\varphi_{p,\sigma}\in L^2(\O)$. 
We first recall the following  useful identity :
\begin{proposition}\label{bcv-pev-prop-ipp} Let $\rho \in C_c(\R^N)$ be a radial function, then for all $u\in L^2(\R^N), \varphi \in C^{\infty}_c(\R^N)$  we have  
$$\iint_{\R^N\times \R^N}\rho(z)[u(x+z)-u(x)]\varphi(x)\,dzdx= \frac{1}{2}\iint_{\R^N\times \R^N}\rho(z)u(x)\Delta_z[\varphi](x)\,dzdx,$$
where $$\Delta_z[\varphi](x):=\varphi(x+z)-2\varphi(x)+\varphi(x-z).$$
\end{proposition}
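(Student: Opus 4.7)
The plan is to prove the identity by a symmetrization argument, exploiting the fact that $\rho$ is radial (hence even) together with a simple translation change of variable. Since $\varphi \in C^\infty_c(\R^N)$ and $\rho \in C_c(\R^N)$, and $u \in L^2_{\text{loc}}(\R^N)$ via $L^2(\R^N)$, I would first check integrability: the integrand $\rho(z)[u(x+z)-u(x)]\varphi(x)$ is supported in $\operatorname{supp}(\varphi) \times \operatorname{supp}(\rho)$, and a routine Cauchy--Schwarz estimate bounds $\iint |\rho(z)\,u(x+z)\,\varphi(x)|\,dz\,dx$ by $\|\rho\|_\infty\|\varphi\|_\infty\,|\operatorname{supp}(\rho)|\,|\operatorname{supp}(\varphi)|^{1/2}\|u\|_{L^2}$, which makes Fubini and translation changes of variable legitimate.

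Next, I would split the left-hand side as $A - B$, where
$$A := \iint \rho(z)\,u(x+z)\,\varphi(x)\,dz\,dx, \qquad B := \iint \rho(z)\,u(x)\,\varphi(x)\,dz\,dx.$$
For $A$, I apply the change of variables $y = x+z$ (at fixed $z$), then rename $y$ back to $x$, to get $A = \iint \rho(z)\,u(x)\,\varphi(x-z)\,dz\,dx$. Then the further substitution $z \mapsto -z$, combined with the radial (hence even) property $\rho(-z)=\rho(z)$, yields the alternative representation $A = \iint \rho(z)\,u(x)\,\varphi(x+z)\,dz\,dx$ as well. Averaging the two expressions for $A$ gives
$$A \;=\; \tfrac{1}{2}\iint \rho(z)\,u(x)\,\bigl[\varphi(x+z)+\varphi(x-z)\bigr]\,dz\,dx.$$

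Finally, writing $B = \iint \rho(z)\,u(x)\,\varphi(x)\,dz\,dx$ and combining:
$$A - B \;=\; \tfrac{1}{2}\iint \rho(z)\,u(x)\,\bigl[\varphi(x+z)-2\varphi(x)+\varphi(x-z)\bigr]\,dz\,dx \;=\; \tfrac{1}{2}\iint \rho(z)\,u(x)\,\Delta_z[\varphi](x)\,dz\,dx,$$
which is the desired identity. There is no real obstacle here: the only subtlety is to justify the translation change of variables and the use of Fubini when $u$ is merely $L^2$, which is handled by the compact supports of $\rho$ and $\varphi$. The essential mechanism is simply that $\rho$ being radial (equivalently even) allows one to symmetrize the $z \to -z$ variable and convert a first-difference acting on $u$ into a symmetric second-difference acting on $\varphi$.
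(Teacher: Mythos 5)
Your proof is correct and rests on exactly the same two ingredients as the paper's: the translation invariance of Lebesgue measure and the evenness of the radial $\rho$ under $z\mapsto -z$. The only cosmetic difference is that you average two representations of $\iint\rho(z)u(x+z)\varphi(x)$ directly, whereas the paper passes through the intermediate symmetric form $-\tfrac12\iint\rho(z)[u(x+z)-u(x)][\varphi(x+z)-\varphi(x)]$; your justification of Fubini and the changes of variables via the compact supports and Cauchy--Schwarz is a welcome addition the paper leaves implicit.
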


\begin{proof}
Set $$I:=\iint_{\R^N\times \R^N}\rho(z)[u(x+z)-u(x)]\varphi(x)\,dzdx. $$
By standard change of variable, thanks to the symmetry of $\rho$,  we get
 \begin{align*}
I&=\frac{1}{2} \iint_{\R^N\times \R^N}\rho(z)[u(x+z)-u(x)]\varphi(x) +\frac{1}{2} \iint_{\R^N\times \R^N}\rho(-z)[u(x-z)-u(x)]\varphi(x),\\
 &=\frac{1}{2} \iint_{\R^N\times \R^N}\rho(z)[u(x+z)-u(x)]\varphi(x) +\frac{1}{2} \iint_{\R^N\times \R^N}\rho(z)[u(x)-u(x+z)]\varphi(x+z),\\
 &=-\frac{1}{2} \iint_{\R^N\times \R^N}\rho(z)[u(x+z)-u(x)][\varphi(x+z)-\varphi(x)],\\
 &=-\frac{1}{2} \iint_{\R^N\times \R^N}\rho(z)u(x)[\varphi(x)-\varphi(x-z)] +\frac{1}{2} \iint_{\R^N\times \R^N}\rho(z)u(x)[\varphi(x+z)-\varphi(x)],\\
 &=\frac{1}{2} \iint_{\R^N\times \R^N}\rho(z)u(x)[\varphi(x+z)-2\varphi(x)+\varphi(x-z)].
 \end{align*}
\end{proof}

Consider now $\sigma\le \sigma_2(\O)$ and let $\varphi_{p,\sigma}$ be a positive eigenfunction associated with $\lambda_{p,\sigma}$. That is $\varphi_{p,\sigma}$ satisfies
\begin{equation}\label{bcv-eq-scal-phip1}
\opmem{\varphi_{p,\sigma}}{2,\O}(x)+(a(x)+\lambda_{p,\sigma}) \varphi_{p,\sigma}(x)=0 \quad \text{ for all }\quad x\in \O.
\end{equation}
Let us normalize $\varphi_{p,\sigma}$ by $\nlp{\varphi_{p,\sigma}}{2}{\O}=1$.

 Multiplying \eqref{bcv-eq-scal-phip1} by $\varphi_{p,\sigma}$ and integrating  over $\O$, we get  

\begin{align*}
\frac{D_2(J)}{2}\iint_{\O\times \O}\rho_\sigma(x-y)\frac{|\varphi_{p,\sigma}(y)-\varphi_{p,\sigma}(x)|^2}{|x-y|^2}\,dxdy\le \int_{\O}(a(x)+\lambda_{p,\sigma})\varphi_{p,\sigma}^2(x)\,dx\le C.
 \end{align*} 
Since $a$ and $\lambda_{p,\sigma}$ are bounded independently of $\sigma\le \sigma_2(\O)$, the constant $C$ stands for all $\sigma\le\sigma_2(\O)$. Therefore for any bounded sub-domain $\O'\subset \O$, 
$$\iint_{\O'\times \O'}\rho_{\sigma}(x-y)\frac{(\varphi_{p,\sigma}(y)-\varphi_{p,\sigma}(x))^2}{|x-y|^2}\,dxdy<C. $$

Therefore by the characterisation of Sobolev space in \cite{Ponce2004,Ponce2004a}, for any bounded sub-domain $\O'\subset \O$, along a sequence,  $\varphi_{p,\sigma}\to \varphi$ in $L^2(\O')$. Moreover, by extending $\varphi_{p,\sigma}$ by $0$ outside $\O$, we have $\varphi_{p,\sigma} \in L^2(\R^N)$ and for any $\psi \in C^2_c(\O)$ by Proposition \ref{bcv-pev-prop-ipp} it follows that

\begin{equation} \label{bcv-pev-eq-scal-phip2}
\frac{D_2(J)}{2}\iint_{\O\times \R^N}\frac{\rho_\sigma(z)}{|z|^2}\varphi_{p,\sigma}(x)\Delta_z[\psi](x)\,dxdz = -\int_{\O}(a(x)+\lambda_{p,\sigma}-1+p_\sigma(x))\psi\varphi_{p,\sigma}\,dx. 
\end{equation} 

Recall that $\psi \in C^{\infty}_c(\R^N)$, so there exists $C(\psi)$ and $R(\psi)$ such that for all $x\in \R^N$
$$|\Delta_z[\psi](x)- \transposee{z} (\nabla^{2}\psi(x))z|<C(\psi)|z|^{3}\mathds{1}_{B_{R(\psi)}}(x). $$ 

Therefore, since $\varphi_{p,\sigma}$ is bounded uniformly in $L^2(\O)$,
\begin{equation}\label{bcv-eq-lim-m2-eps-0-3}
 \frac{D_2(J)}{2}\iint_{\O\times\R^N}\frac{\rho_\sigma(z)}{|z|^2}\varphi_{p,\sigma}(x)[\Delta_z[\psi](x)-\transposee{z}(\nabla^{2}\psi(x))z]\,dxdz\le CC(\psi)\int_{\R^N}\rho_n(z)|z| \to 0.
 \end{equation}

On the other hand, $\psi \in C_c^2(\O)$ enforces that  for $\sigma$ small enough  $supp(1-p_\sigma(x))\cap supp(\psi)=\emptyset$. Thus passing to the limit  along a sequence in \eqref{bcv-pev-eq-scal-phip2},thanks to \eqref{bcv-eq-lim-m2-eps-0-3},  we get

 \begin{equation}\label{bcv-eq-clai-no-triv4}
\frac{D_2(J)K_{2,N}}{2}\int_{\O}\varphi(x)\Delta\psi(x)\,dx +\int_{\O}\varphi(x)\psi(x)(a(x)+\lambda_1)\,dx =0.
\end{equation}

\eqref{bcv-eq-clai-no-triv4} being true for any $\psi$, it follows that  $\varphi$ is the smooth positive eigenfunction associated to $\lambda_1$ normalised by $\nlp{\varphi}{2}{\O}=1=\lim_{\sigma \to 0}\nlp{\varphi_{p,\sigma}}{2}{\O}$. The  normalised first eigenfunction being uniquely defined, we get $\varphi=\varphi_1$
 and   $\varphi_{p,\sigma}\to \varphi_1$ in $L_{loc}^2(\O)$ when $\sigma \to 0$.

\fdem

\section*{Acknowledgements}

The research leading to these results has received funding from the European Research Council
under the European Union's Seventh Framework Programme (FP/2007-2013) / ERC Grant
Agreement n°321186 : "Reaction-Diffusion Equations, Propagation and Modelling" held by Henri Berestycki.
J. Coville acknowledges support from the “ANR JCJC” project MODEVOL: ANR-13-JS01-0009 and  the ANR "DEFI" project NONLOCAL: ANR-14-CE25-0013.  
\section*{}
\bibliographystyle{amsplain}
\bibliography{bcv-pev.bib}

\end{document}